\providecommand{\noopsort}[1]{} 
\def\qed{\unskip\quad \hbox{\vrule\vbox
to 6pt {\hrule width 4pt\vfill\hrule}\vrule} }
\newcommand{\bez}{\nopagebreak\hspace*{\fill}
 \nolinebreak$\qed$\vspace{5mm}\par}
\newtheorem{Th}{Theorem}[section]
\newtheorem{Prop}[Th]{Proposition}
\newtheorem{Lemma}[Th]{Lemma}
\theoremstyle{definition}
\newtheorem{Remark}[Th]{Remark}
\newtheorem{Cor}[Th]{Corollary}
\newtheorem{Example}{Example}[section]
\newcommand{\beq}{\begin{equation}}
\newcommand{\eeq}{\end{equation}}
\def\scalar(#1,#2){(#1\mid#2)}
\newcommand{\cm}{{\cal M}}
\newcommand{\ot}{\otimes}
\newcommand{\ov}{\overline}
\newcommand{\la}{\lambda}
\newcommand{\R}{{\mathbb{R}}}
\newcommand{\T}{{\mathbb{T}}}
\newcommand{\C}{{\mathbb{C}}}
\newcommand{\Z}{{\mathbb{Z}}}
\newcommand{\N}{{\mathbb{N}}}
\newcommand{\PP}{{\mathbb P}}
\newcommand{\vep}{\varepsilon}
\newcommand{\lcm}{{\rm lcm}}
\newcommand{\spec}{{\rm spec}}
\title{On some classification problems of multiplicative functions}
\author{S. Kasjan, O. Klurman,  M. Lema\'nczyk}
\begin{document}
\maketitle

\begin{abstract} We prove that a multiplicative function $f:\N\to\C$ is Toeplitz if and only if there are a Dirichlet character $\chi$ and a finite subset $F$ of prime numbers such that $f(n)=\chi(n)$ for each $n$ which is coprime to all numbers from $F$. All such functions bounded by~1 are necessarily pretentious and they have exactly one Furstenberg system. Moreover, we characterize the class of pretentious functions that have precisely one Furstenberg system as those being Besicovitch (rationally) almost periodic. As a consequence, we show that the corrected Elliott's conjecture implies Frantzikinakis-Host's conjecture on the uniqueness of Furstenberg system for all real-valued bounded by~1 multiplicative functions. We also clarify relations between different classes of aperiodic multiplicative functions. \end{abstract}

\section*{Introduction}
\subsection{\label{s:Llike} Pretentious and aperiodic multiplicative functions}
In connection with a progress around the Chowla \cite{Ch} and Sarnak's conjectures \cite{Sa}, in the last decade, we observe a growing interest toward a classification of multiplicative functions\footnote{$f:\N\to\C$ is called {\em multiplicative} if $(\ast)\;f(mn)=f(m)f(n)$ for all coprime $m,n$ (if $(\ast)$ is satisfied unconditionally, then we speak about {\em completely multiplicative} functions).}  according to their ``degree of randomness''. We recall  that in the class $\cm$ of bounded by~1, multiplicative functions a special role is played by the so called multiplicative distance (introduced by Granville and Soundararajan \cite{Gr-Sa}): for $f,g\in\mathcal{M}$, we set
\beq\label{gs1}\mathbb{D}(f,g)^2=\sum_{p\in\mathbb{P}}\frac1p\Big(1-{\rm Re}(f(p)\ov{g(p)})\Big)\eeq
and we say that $f$ {\em pretends to be} $g$, denoted by $f\sim g$,  whenever $\mathbb{D}(f,g)<+\infty$. Then $f$ is called {\em pretentious} if \beq\label{pret1}f\sim \chi\cdot n^{it}\eeq
for some Dirichlet character $\chi$ and $t\in \R$ (the function $n\mapsto n^{it}$ is often called an Archimedean character).
Pretentious functions are thought of being not random,\footnote{However, even such functions can be of positive topological entropy, see e.g. \cite{Sa} for $f=\mu^2$.} but it is only the recent paper \cite{Fr-Le-Ru} which made this statement precise: all Furstenberg systems\footnote{Given $f\in\mathcal{M}$, by $(X_f,S)$, we denote the subshift generated by $f$: $X_f:=\ov{\{S^nf\colon n\in\Z\}}\subset \mathbb{U}^{\Z}$ ($\mathbb{U}$ stands for the unit disc), with $S$ being the left shift in $\mathbb{U}^{\Z}$; a Furstenberg system of $f$ is any measure-theoretic system $(X_f,\nu,S)$, where $\nu$ is an $S$-invariant (Borel) probability measure obtained as: $\lim_{k\to\infty}\frac1{N_k}\sum_{n\leq N_k}\delta_{S^nf}$, the limit taken in the weak$^\ast$-topology. Statistical properties of $f$ are studied via ergodic properties of Furstenberg systems. We recall that $(X_f,\nu,S)$ has {\em discrete spectrum} if $L^2(X_f,\nu)$ is spanned by the eigenfunctions of the unitary operator $U_S$: $g\mapsto g\circ S$. Discrete spectrum is the simplest instance of zero entropy systems.

If there is only one Furstenberg system $(X_f,\nu,S)$ for $f$, that is, when $\lim_{N\to\infty}\frac1N\sum_{n\leq N}\delta_{S^nf}=:\nu$ exists, then
$f$ is called {\em generic} for $\nu$.} of such functions are of zero entropy (in fact, they have discrete spectrum, see below), however, still a refined classification inside this class is expected. We will return to this problem later on.

Slightly despite its formal definition, non-pretentiousness turns out to be both strong and clear notion, namely, in view of Corollary~1 \cite{Da-De}, $f\in \cm$ is non-pretentious if and only if it is {\em aperiodic}, i.e.\ its Ces\`aro averages along every infinite arithmetic progression are tending to zero. While by the Chowla conjecture \cite{Ch}, \cite{Sa} the classical multiplicative functions like Liouville and M\"obius  are predicted of being extremely random (e.g., conjecturally, the Liouville function is a generic point for the Bernoulli measure $(1/2,1/2)$ for the full shift in $\{-1,1\}^{\Z}$), until 2015, such extremal randomness was also thought to hold for all other aperiodic functions from $\cm$ (Elliott's conjecture, see e.g.\ \cite{Ma-Ra-Ta}). In its slightly simplified form, Elliott's conjecture asserted that if $f_1,\ldots,f_k\in\mathcal{M}$ and for some $1\leq j_0\leq k$, $f_{j_0}$ is aperiodic then
\beq
\label{elliott1}
\lim_{N\to\infty}\frac1N\sum_{n<N}\prod_{j=1}^k f_j(n+b_j)=0\eeq
for any choice of integers $b_1<\ldots<b_k$ and all $k\geq1$.
However, by disproving Elliott's conjecture in \cite{Ma-Ra-Ta}, on one hand, the authors of \cite{Ma-Ra-Ta} introduced the concept of {\em strongly non-pretentious} (also called, {\em strongly aperiodic})\footnote{The idea is to expect a quantitative (with respect to $t$)  condition on non-pretentiousness: for all $Q\geq1$,
 $$\lim_{X\to\infty}\inf_{|t|\leq X}\min_{\chi~{\rm mod}~q, q\leq Q}\sum_{p\leq X}\frac1p\big(1-{\rm Re}(f(p)\chi(p)p^{it})\big)=+\infty.$$ The Liouville or M\"obius functions are strongly aperiodic.} functions for which a corrected Elliott's conjecture (cE-conjecture), in particular, the Chowla conjecture, is expected to hold, while on the other side, they showed that there are aperiodic functions for which the Chowla conjecture fails. The latter class of aperiodic functions was called the MRT class in \cite{Go-Le-Ru} and it was shown there that each MRT function displays a mixed behaviour: its Furstenberg systems vary from extremely random (the Chowla conjecture holds along a subsequence) while there is also a variety of zero entropy Furstenberg systems of algebraic origin (non-ergodic, unipotent systems of tori). In \cite{Fr-Le-Ru}, even more zero entropy (i.e.\ deterministic) Furstenberg systems have been discovered (not necessarily of algebraic origin) but the full classification  of Furstenberg systems (up to measure-theoretic isomorphism) of MRT functions still seems to be out of reach. To see some more complications, in \cite{Kl-Ma-Te}, the authors introduced two further subclasses of aperiodic multiplicative functions: those which are {\em moderately aperiodic}\footnote{\label{foot:MA} For some $A>0$, we have
$$\lim_{X\to\infty}\frac1{\log\log X}\inf_{|t|\leq X^A}\min_{\chi~{\rm mod}~q, q\leq (\log X)^A}\sum_{p\leq X}\frac1p\big(1-{\rm Re}(f(p)\chi(p)p^{it})\big)=0.$$
Note that this condition is satisfied for any pretentious function, that is why we assume $f$ to be aperiodic.} for which the Chowla conjecture holds along a subsequence and also the class of Liouville-like functions $f(n)=\lambda_{\mathcal{P}}(n):=(-1)^{\omega_{\mathcal{P}}(n)}$ ($\omega_{\mathcal{P}}(n)$ counts the number of prime divisors of $n$ belonging to $\mathcal{P}$), where $\mathcal{P}\subset\mathbb{P}$ has zero density in $\mathbb{P}$ and $\sum_{p\in\mathcal{P}}\frac1p=+\infty$ which do satisfy the Chowla conjecture. Some  relations between the aforementioned subclasses of aperiodic functions will be discussed in Section~\ref{s:relacje}.

\subsection{Main aim of the paper}
Our first task in this paper is to investigate various subclasses of pretentious functions. It can be seen as a continuation of investigations in \cite{Fr-Le-Ru}, that is, studying statistical properties of pretentious functions using dynamical tools. Our main aim is to prove that in the class of pretentious functions in $\mathcal{M}$ we have the following relations between the natural subclasses:

\beq\label{mapapret}
\begin{array}{c}
\text{ periodic } \subsetneq \text{  automatic } \subsetneq \text{  Toeplitz } \subsetneq \\ \text{ Besicovitch (rationally) almost periodic } = \\
\text{ unique Furstenberg system (generic)}\subsetneq \text{  pretentious}.\end{array}\eeq
Moreover, we provide purely number-theoretic characterizations of all above subclasses, given in Proposition~\ref{prop:periodic}, Proposition~\ref{prop:auto_Toeplitz}, Theorem~\ref{t:glowne} and  Theorem~\ref{t:OK}, respectively.  We pass now to a more precise description of the results together with some motivations, applications and all necessary definitions.

\subsubsection{Pretentious multiplicative functions, Furstenberg systems and Toeplitz sequences}For the pretentious functions, as shown in \cite{Fr-Le-Ru}, the ergodic components of all Furstenberg systems are odometers, i.e.\ algebraic systems being rotations on inverse limits of (finite) cyclic groups. More precisely, they are considered with the addition of~1 and Haar measure being the unique invariant measure and, as measure-theoretic dynamical systems, they are ergodic and have discrete spectrum (hence, they are of zero entropy), where the spectrum consists of roots of unity of degrees given by the order of the relevant cyclic groups. Moreover, if in~\eqref{pret1} $t=0$,  then all Furstenberg systems are ergodic and pairwise isomorphic (we can have however uncountably many of them). In dynamics, the simplest class of (topological) systems ``producing'' odometers are subshifts determined by the regular Toeplitz sequences.\footnote{\label{ft:reg} A sequence $f=(f(n))$ over an arbitrary ``alphabet'' is called Toeplitz if every position $n$ has its own period: there exists $k_n\geq1$ such that $f(n+sk_n)=f(n)$ for all $s\in\Z$. Then, in fact, there exists a period structure, that is, there exists a sequence $(m_n)$ of essential periods (see Remark \ref{rem:terminology}) $m_n|m_{n+1}$, $n\geq1$ \cite{Wi}. If $\ell_n$ denotes the number of positions $j\leq n$ whose period is $\leq m_n$, then $f$ is called {\em regular} provided that $\ell_n/m_n\to1$. If additionally the alphabet is compact then we can speak about the subshift $(X_f,S)$ generated by $f$. Then, for $f$ being a regular Toeplitz sequence the corresponding subshift is uniquely ergodic, and the associated odometer is given by the inverse limits of $\Z/m_n\Z$ (the numbers $e^{2\pi i/m_n}$ are eigenvalues of the unitary operator $U_S$ acting on $L^2(X_f,\nu)$).}\\
The following question arises naturally: which multiplicative sequences are Toeplitz? Note that if $f\in\mathcal{M}$ is Toeplitz then $f$ has to be pretentious, because along the arithmetic sequence determined by the period of the position~1 the sequence $(f(1+k_1s))_{s}$ is constant and equals to~1 (whence the mean along this sequence is non-zero). In fact,
\beq\label{pret2}\mbox{if $f\in\cm$ is Toeplitz then $f\sim \chi$}\eeq
for some Dirichlet character $\chi$. Indeed,
we will repeat the argument used in the survey \cite{Fe-Ku-Le} (see the proof of (15) in Section 2.3 therein). Suppose that
$\mathbb{D}(f,\chi\cdot n^{it})<+\infty$ for some Dirichlet character $\chi$ and $t\neq0$. Since $\mathbb{D}(f,\chi\cdot n^{it})=\mathbb{D}(f\cdot\ov{\chi}, n^{it})$ and clearly $f\cdot\ov{\chi}$ is also Toeplitz, we can assume that
$$
\mathbb{D}(f,n^{it})^2=\sum_{p}\frac1p\big(1-{\rm Re}(f(p)p^{it})\big)<+\infty.$$
Consider the arithmetic sequence $m\N_0+1$, where $m$ is a period for the position~1. Then $f$ is constant equal to~1 along that sequence, so in particular, we obtain $$\sum_{p\in m\N_0+1}\frac1p\big(1-\cos(t\log p)\big)<+\infty.$$
By the PNT in arithmetic progressions, there exists a constant $C>0$ such that for all $k\geq k_0$, we have
$$
\Big|\Big\{p\in m\N_0+1\colon e^{\frac{2\pi}t(k+\frac16)}\leq p\leq e^{\frac{2\pi}t(k+\frac56)}\Big\}\Big|\geq C\cdot \frac1{\phi(m)}\cdot\frac{e^{2\pi k/t}}{k/t}.$$
Equivalently,
$$\Big|\Big\{p\in m\N_0+1\colon
k+\frac16\leq \frac{t\log p}{2\pi}\leq k+\frac56\Big\}\Big|\geq
C\cdot \frac1{\phi(m)}\cdot\frac{e^{2\pi k/t}}{k/t}.$$
It follows that, for some constants  $D,E>0$ (depending on $t$, $m$),
$$
\sum_{e^{\frac{2\pi}t(k+\frac16)}\leq p\leq e^{\frac{2\pi}t(k+\frac56)}, p\in m\N_0+1}\frac1p\Big(1-\cos(t\log p)\Big)\geq$$$$ D\cdot\frac1{e^{\frac{2\pi}t(k+\frac56)}}\cdot\frac{e^{2\pi k/t}}{k/t}\geq E\cdot \frac1k$$
and the summation over $k\geq k_0$ yields a contradiction.

Note also that~\eqref{pret2} follows immediately from Theorem~\ref{t:glowne} (below). The reasoning above does not provide us a refined information on $\chi$. We will derive more precise information on $\chi$ in Remark~\ref{rem:pushout_nowa}.\footnote{We also consider the case of periodic multiplicative functions, see Proposition~\ref{prop:periodic} (periodic functions are both automatic and Toeplitz).}

\subsubsection{Automatic sequences and multiplicative functions} Yet, there are other motivations to study multiplicative functions which are Toeplitz coming from the theory of automatic sequences.\footnote{A sequence $(a_n)$ is called $r$-{\em automatic} if its $r$-{\em kernel}, i.e.\ the family of sequences
$$\{n\mapsto r^kn+j\colon k\geq1, 0\leq j<r^k\}$$
is finite. Automatic sequences are exactly those produced by finite automata, see e.g.\ \cite{Al-Sh}.}  In \cite{Be-Br-Co}, it was conjectured (Bell-Bruin-Coons' conjecture, for short, the BBC conjecture) that for each multiplicative automatic sequence $f$ there exists an eventually periodic $g$ such that $f(p)=g(p)$ for all prime numbers $p$. This conjecture was settled in \cite{Kl-Ku}, \cite{Ko}, and following \cite{Ko}, for each multiplicative automatic sequence there exists $\chi$ being a Dirichlet character or all the zero sequence such that
\beq\label{rowf}\mbox{$f(n)=\chi(n)$ for all $n$ coprime with a finite set $F\subset\mathbb{P}$.}\eeq A certain problem with the BBC conjecture is that it tells us about a property of multiplicative functions which is definitely weaker than automaticity. Indeed, first of all, the automaticity of a sequence implies that the used alphabet must be finite, while condition~\eqref{rowf} can be even satisfied for a sequence whose set of values is infinite.
More importantly, even for a finite alphabet, condition~\eqref{rowf} is weaker than automaticity.
Indeed, note that the condition in \eqref{rowf} is closed under multiplication:\footnote{Also the BBC conjecture itself is closed under multiplication.} if $f',\chi',F'$ also satisfy \eqref{rowf}, then
\beq\label{ilo}
f(n)f'(n)=\chi(n)\chi'(n)\text{ for all }n\text{ coprime with }F\cup F'\eeq
and $\chi\cdot \chi'$ is still a Dirichlet character. However, if $f$ is $p$-automatic, and $f'$ is $q$-automatic then (in general) $f\cdot f'$ is  $r$-automatic for no $r\in\mathbb{P}$. A relevant example can be found in Appendix, Section~\ref{s:przyklad}.

One of the main results of the paper says that  the (slightly modified) BBC-conjecture is nothing but a characterisation\footnote{\label{f:dziesiatka} Note that if \eqref{rowf} is satisfied then for each $n\geq 1$, $n=n'\prod_{p\in F}p^{a_p}$ with $n'$ coprime to $F$, for each $s$, we have ($d$ stands for the modulus of $\chi$)
$$f\Big(n+sd\prod_{p\in F}p^{a_p+1}\Big)=f\Big(\prod_{p\in F}p^{a_p}\Big(n'+sd\prod_{p\in F}p\Big)\Big)=$$$$
f\Big(\prod_{p\in F}p^{a_p}\Big)f\Big(n'+sd\prod_{p\in F}p\Big)=f\Big(\prod_{p\in F}p^{a_p}\Big)\chi\Big(n'+sd\prod_{p\in F}p\Big)=$$$$
f\Big(\prod_{p\in F}p^{a_p}\Big)\chi(n')=f(n),$$
so the sufficiency in Theorem~\ref{t:glowne} is obvious.
}
of Toeplitz multiplicative sequences:

\begin{Th}\label{t:glowne} Let $f:\N\to\C$ be a nonzero multiplicative function. Then $f$ is Toeplitz if and only if~\eqref{rowf} is satisfied, i.e.\
there are a Dirichlet character  $\chi$  and a finite set $F\subset\mathbb{P}$ such that $f(n)=\chi(n)$ for all $n$ coprime with all numbers in $F$. Moreover, each multiplicative Toeplitz sequence must be regular.
\end{Th}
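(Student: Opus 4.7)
The sufficiency is established in footnote~\ref{f:dziesiatka}, so the plan is to prove necessity together with the regularity. Let $f$ be a nonzero multiplicative Toeplitz sequence. Being nonzero and multiplicative forces $f(1)=1$, and the Toeplitz property at position~$1$ supplies a period $k_1\geq 1$ with $f(1+sk_1)=f(1)=1$ for every $s\geq 0$; equivalently, $f\equiv 1$ on the arithmetic progression $1+k_1\N_0$. The whole argument will rest on this identity combined with multiplicativity.

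The core step is to show that $f$ restricted to the integers coprime to $k_1$ factors through a character of $(\Z/k_1\Z)^\ast$. For $a\in(\Z/k_1\Z)^\ast$ and $n\equiv a\pmod{k_1}$ with $\gcd(n,k_1)=1$, Dirichlet's theorem lets me pick a prime $\ell\equiv a^{-1}\pmod{k_1}$ with $\ell>n$, so that $\gcd(n,\ell)=1$ and $n\ell\equiv 1\pmod{k_1}$; multiplicativity then yields $f(n)f(\ell)=f(n\ell)=1$, so $f(n)$ is forced to equal $1/f(\ell)$. For a second integer $n'$ in the same residue class, choosing the auxiliary prime $\ell$ larger than both $n$ and $n'$ gives $f(n)=f(n')=1/f(\ell)$; hence $\tilde\chi(a):=f(n)$ is well-defined on $(\Z/k_1\Z)^\ast$. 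Picking coprime representatives for any $a,b$ (e.g.\ two different large primes in the respective classes) and using multiplicativity of $f$, one checks $\tilde\chi(ab)=\tilde\chi(a)\tilde\chi(b)$, so $\tilde\chi$ is a character. Extending by zero produces a Dirichlet character $\chi$ modulo $k_1$ with $f(n)=\chi(n)$ for every $n$ coprime to $F:=\{p\in\PP\,:\,p\mid k_1\}$, completing the necessity.

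For regularity, the computation in footnote~\ref{f:dziesiatka} shows that if $n=n'\prod_{p\in F}p^{a_p}$ with $\gcd(n',\prod_{p\in F}p)=1$, then $k_1\prod_{p\in F}p^{a_p+1}$ is a period of $f$ at position $n$. Taking the period structure $m_N:=k_1\prod_{p\in F}p^N$, every $n\in[1,m_N]$ satisfying $v_p(n)\leq N-1$ for all $p\in F$ has period dividing $m_N$, and the number of such $n$ is at least $m_N\bigl(1-\sum_{p\in F}p^{-N}\bigr)$. Since $F$ is finite, this is $m_N(1-o(1))$, so the regularity ratio tends to $1$ and $f$ is regular.

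The only truly delicate point in the whole scheme is the well-definedness of $\tilde\chi$: one has to simultaneously control the residue class of the auxiliary integer modulo $k_1$ and enforce coprimality with the finitely many given integers, but Dirichlet's theorem on primes in arithmetic progressions handles both constraints at once. I therefore expect no serious further obstacle beyond the initial observation that the Toeplitz hypothesis at position~$1$, coupled with multiplicativity, is already strong enough to force the full Dirichlet character structure.
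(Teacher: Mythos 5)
Your proof is correct and at its core uses the same idea as the paper: the Toeplitz condition at position~$1$ forces $f\equiv1$ on $1+k_1\N_0$, and combining this with multiplicativity and Dirichlet's theorem on primes in progressions extracts a Dirichlet character. The execution, however, is noticeably more streamlined than the paper's. The paper first builds up machinery about period structures (Lemmas~\ref{eq:periodof1}, \ref{lem:completemult}, \ref{lem:zamiast1.9}, \ref{lem:finitespec}, \ref{lem:minperiod1}), proving in particular that the spectrum of periods is finite and that there is a smallest period of position~$1$ with the ``correct'' spectrum; it then proves regularity (Proposition~\ref{cor:regular}) and the character representation (Theorem~\ref{th:chiporaz1}) from that machinery, both independently of Theorem~\ref{t:glowne}. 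You instead take any period $k_1$ of position~$1$, define $\tilde\chi$ directly on $(\Z/k_1\Z)^\ast$ by a clean ``pick a large auxiliary prime in the inverse class'' argument, verify well-definedness and multiplicativity on the spot, and then derive regularity as a corollary of the representation $f(n)=\chi(n)$ via the explicit period formula of footnote~\ref{f:dziesiatka} together with a density count against $m_N=k_1\prod_{p\in F}p^N$. What the paper's longer route buys is finer structural information (which minimal modulus works, the precise spectrum $\spec(Per(f))$, uniqueness of $\chi$ once the modulus is normalized) that is used later in Remark~\ref{rem:pushout_nowa} and Proposition~\ref{prop:auto_Toeplitz}; what your route buys is a short, self-contained proof of exactly the statement of Theorem~\ref{t:glowne}. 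One small stylistic remark: you should note, when checking $\tilde\chi(ab)=\tilde\chi(a)\tilde\chi(b)$ for $a=b$, that Dirichlet's theorem supplies two \emph{distinct} primes in the class of $a$, so multiplicativity still applies; you already signalled this implicitly with ``two different large primes.''
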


\begin{Remark} \label{r:ml1} As bounded regular Toeplitz sequences determine uniquely ergodic systems, we have shown (in particular) that whenever $F\subset\mathbb{P}$ is finite and \eqref{rowf} holds then $f$ has exactly one Furstenberg system. It is natural to ask what happens if we take $F$ to be infinite and ``small'', say $\sum_{p\in F}\frac1p<+\infty$. For example, can we find an infinite $F\subset \mathbb{P}$, $\sum_{p\in F}\frac1p<+\infty$ satisfying $f(n)=\chi(n)$ for $n$ coprime with $F$  (note that these conditions imply $f\sim \chi$) such that the mean $M(f)$ of $f$ does not exist?\footnote{Such functions must have uncountably many Furstenberg systems for $f$. The first examples of functions pretending to be Dirichlet characters and having uncountably many
Furstenberg systems were given in \cite{Fr-Le-Ru}. All such examples are built over infinite alphabets. As Corollary~\ref{c:FV} shows, it is impossible to find such an example in the class of pretentious functions taking finitely many values.} It is slightly surprising that the answer to this question is negative. In fact, in Corollary~\ref{c:male1}, we will show that under the above assumptions, we are still in the class of multiplicative functions having exactly one Furstenberg system.
\end{Remark}

\subsubsection{Precise description of automatic sequences: periodic multiplicative functions and $p$-valuation}
Recalling that the BBC conjecture was not aimed at a characterization of automatic sequences which are multiplicative, in \cite{Ko-Le-Mu}, the problem of precise description of all automatic sequences which are multiplicative has been solved. If $(a_n)_{n\ge  1}$ is such a sequence then it must be of the form ($\nu_p(n)$ stands for the $p$-valuation of $n$):
\beq\label{form1}
a_n=f_1(\nu_p(n))f_2\Big(\frac{n}{p^{\nu_p(n)}}\Big)\eeq
for some $p\in\PP$, $f_1:\N_0\to\C$ eventually periodic, $f_1(0)=1$ and an eventually periodic $f_2:\N\to\C$ which is multiplicative such that $f_2(p^k)=0$ for $k\ge 1$.
We shall call an automatic multiplicative sequence $(a_n)$ {\em nonsingular} if it admits the presentation (\ref{form1}) with $f_2$ nonzero periodic.\footnote{\label{f:zeroBnorm} Note that in the singular case, \eqref{hi18} holds. It follows that there exists a finite set $S\subset\N$ such that $g_2(n):=f_2\Big(\frac{n}{p^{\nu_p(n)}}\Big)$ takes non-zero values only at numbers $mp^k$ with $m\in S$ and $k\geq1$. The set of such numbers has clearly zero density, and therefore $g_2$ is not pretentious (hence it is aperiodic). Note that by the same token, for any $g_1\in\mathcal{M}$, the function $n\mapsto g_1(n)g_2(n)$ is also aperiodic. It follows that if the automatic sequence \eqref{form1} is pretentious, it must be nonsingular.}
As each multiplicative function $f_2:\N\to\C$ which is eventually periodic either has finite support\footnote{\label{f:finitesupp} and then $f_2$ is fully described by the condition
\beq\label{hi18}|\{(q,k)\in\PP\times\N\colon f_2(q^k)\neq0\}
|<+\infty.\eeq}
or $f_2$ is multiplicative and periodic \cite{Ka0} (see also e.g.\ \cite{Ko-Le-Mu}, Lemma~3.2), the problem of classifying automatic multiplicative sequences is reduced to the classification of periodic multiplicative sequences. The latter is present in the old literature\footnote{Classically, periodic completely multiplicative functions are precisely Dirichlet characters.}, see \cite{Ka0}, \cite{Ka}, \cite{Gl}, \cite{Le-Wo}  but, for sake of completeness, in Appendix, Section~\ref{s:periodic}, we give a separate proof\footnote{Let us add that the proof of Theorem 3 \cite{Ka-Pe} also seems to give some characterization of periodic multiplicative functions (and the methods applied there can be pushed forward to get the detailed description). Also, the proof of Proposition~4.4 in \cite{Kl} yields a description whenever the value at the period is non-zero.}
using Theorem~\ref{t:glowne}.

Next proposition gives a complete description\footnote{Note that condition~\eqref{eq:newformula} reads: for all $\ell\geq0$,\\
(i) if $q|t$ then $f(q^{(\nu_q(M)-\nu_q(t))+\ell})=f(q^{\nu_q(M)-\nu_q(t)})\theta(q^{\ell})$,\\
(ii) if $q|M$ and ${\rm gcd}(q,t)=1$ then $f(q^{\nu_q(M)+\ell})=f(q^{\nu_q(M)})\theta(q^{\ell})$,\\
(iii) if ${\rm gcd}(q,M)=1$ then $f(q^{\ell})=\theta(q^\ell)$.

In Proposition~\ref{p:odwrotny}, we will show how an abstract version of (i)-(iii) defines a periodic multiplicative function.}
of periodic multiplicative sequences:


\begin{Prop}\label{prop:periodic} Let $f$ be a nonzero multiplicative periodic function of period $M$. There exists a Dirichlet character $\theta$ modulo $t$, where $t|M$,  such that $f(n)=\theta(n)$ for all $n$ coprime with $M$. Moreover,
\begin{equation}\label{eq:newformula}
f(q^{\nu_q(M/t)+\ell})=f(q^{\nu_q(M/t)})\theta(q^{\ell})
\end{equation}
for every prime $q$ and $\ell\ge 0$. In particular, $f(q^{\nu_q(M/t)+\ell})=0$ for $q|t$ and $\ell>0$.
\end{Prop}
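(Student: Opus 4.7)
The proof naturally splits into constructing the character $\theta$ and verifying formula~(\ref{eq:newformula}) one prime at a time. First I would argue that the restriction of $f$ to integers coprime with $M$ descends, via periodicity, to a map $\phi:(\Z/M\Z)^\times\to\C$. Nonvanishing and multiplicativity force $f(1)=1$. Given classes $\bar a,\bar b\in(\Z/M\Z)^\times$, one may choose integer representatives $a$ of $\bar a$ and $b'$ of $\bar b$ with $\gcd(a,b')=1$ (after adding a suitable multiple of $M$ to any representative of $\bar b$), so the identity $f(ab')=f(a)f(b')$ descends to $\phi(\bar a\bar b)=\phi(\bar a)\phi(\bar b)$. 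Hence $\phi$ is a Dirichlet character modulo $M$; let $t\mid M$ be its conductor and $\theta$ the primitive character modulo $t$ inducing $\phi$, so $f(n)=\theta(n)$ for every $n$ coprime with $M$.

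For the verification of (\ref{eq:newformula}), the case $\gcd(q,M)=1$ is immediate (both sides equal $\theta(q^\ell)$). If $q\mid M$ but $\gcd(q,t)=1$, set $a=\nu_q(M)$ and $m=M/q^a$; then $t\mid m$ and $\nu_q(M/t)=a$. For $\ell\ge 1$, the identity $q^{a+\ell}+M=q^a(q^\ell+m)$ and periodicity give $f(q^{a+\ell})=f(q^a(q^\ell+m))$; since $q\mid q^\ell$ and $\gcd(q,m)=1$, the factor $q^\ell+m$ is coprime to $q$ and to $m$, hence to $M$, so multiplicativity yields $f(q^{a+\ell})=f(q^a)\theta(q^\ell+m)=f(q^a)\theta(q^\ell)$ using $q^\ell+m\equiv q^\ell\pmod t$.

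The remaining and main case is $q\mid t$. Write $t=q^bt_0$ with $b=\nu_q(t)\ge 1$ and $\gcd(t_0,q)=1$, and set $a=\nu_q(M)\ge b$, $m=M/q^a$, so $t_0\mid m$ and $\nu_q(M/t)=a-b$. For $\ell\ge 1$, put $k=a-b+\ell$; the goal is $f(q^k)=0$. My plan is to produce $n\in\N$ coprime to $q$ with $q^kn\equiv q^k\pmod M$ and $\theta(n)\neq 1$: then periodicity gives $f(q^kn)=f(q^k)$ while multiplicativity gives $f(q^kn)=f(q^k)\theta(n)$, which forces $f(q^k)(\theta(n)-1)=0$ and hence $f(q^k)=0$. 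The congruence $q^kn\equiv q^k\pmod M$ reduces, after dividing out $\gcd(q^k,M)=q^{\min(k,a)}$, to $n\equiv 1\pmod{m\,q^{\max(0,b-\ell)}}$.

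The hard point is arranging $\theta(n)\neq 1$ simultaneously. Primitivity of $\theta$ modulo $t=q^bt_0$ is equivalent to its $q$-local factor $\theta_q$ modulo $q^b$ being nontrivial on $\{u\equiv 1\pmod{q^{b-1}}\}\subseteq(\Z/q^b\Z)^\times$ (for $b=1$ this is just nontriviality of $\theta_q$ modulo $q$). Since $\ell\ge 1$, one has $b-1\ge b-\ell$, so this subgroup lies inside $\{u\equiv 1\pmod{q^{b-\ell}}\}$; I pick $u$ there with $\theta_q(u)\neq 1$. The Chinese remainder theorem then produces $n\in\N$ with $n\equiv u\pmod{q^b}$ and $n\equiv 1\pmod m$, and because $t_0\mid m$ kills the $t_0$-part of $\theta$, one obtains $\theta(n)=\theta_q(u)\neq 1$, closing the case. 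Converting primitivity into this explicit witness $n$, subject to all congruence constraints imposed by $k$, is where the entire proof lives; the rest is bookkeeping with multiplicativity and periodicity.
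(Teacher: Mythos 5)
Your proposal is correct, but it takes a genuinely different route from the paper's for the crucial case $q\mid t$. The paper constructs $\theta$ via the Toeplitz machinery of Theorem~\ref{th:chiporaz1} and Remark~\ref{rem:pushout_nowa}, then splits the vanishing $f(q^{b_q+\ell})=0$ into three sub-steps: first $f(q^{a})=0$ with $a=\nu_q(M)$, proved by contradiction (assuming $f(q^a)\neq 0$ one builds an auxiliary Dirichlet character $\psi$ modulo $M'=M/q^a$ that induces $\chi$, forcing $t\mid M'$ against $q\mid t$); then $f(q^{a+\ell})=0$ via multiplicativity; and finally $f(q^c)=0$ for $b_q<c<a$ by showing that otherwise $M/q^c$ would be a period of position~$1$ contradicting $t\mid(M/q^c)$. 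You instead build $\phi$ mod $M$ directly from periodicity and multiplicativity of $f$ on $M^\perp$, pass to its primitive inducer $\theta$ of conductor $t$, and then for the exponent $k=\nu_q(M/t)+\ell$ you produce \emph{in one stroke} a single congruence witness $n$ coprime to $M$ with $q^kn\equiv q^k\pmod M$ and $\theta(n)\neq 1$: periodicity and multiplicativity then give $f(q^k)=f(q^k)\theta(n)$, forcing $f(q^k)=0$. The existence of the witness is exactly the local content of primitivity at $q$ (nontriviality of $\theta_q$ on the kernel of $(\Z/q^b\Z)^\times\to(\Z/q^{b-1}\Z)^\times$, with $b=\nu_q(t)$), lifted via CRT and the coprimality of $t_0=t/q^b$ to $m=M/q^{\nu_q(M)}$. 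Your computation that $M/\gcd(q^k,M)=mq^{\max(0,b-\ell)}$ and that $\max(0,b-\ell)\leq b-1$ when $\ell\geq 1$ is precisely what makes the witness compatible with the congruence; I verified that all four conditions (coprimality to $q$ and to $M$, the congruence mod $M/\gcd(q^k,M)$, and $\theta(n)\neq 1$) are simultaneously achievable. What your argument buys is self-containment — it avoids the supporting-character and auxiliary-$\psi$ apparatus entirely and handles all exponents $k>\nu_q(M/t)$ uniformly — at the cost of invoking the explicit local description of primitive characters; the paper's version keeps everything inside its Toeplitz framework and reuses machinery already developed for Theorem~\ref{t:glowne}.
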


Furthermore, we characterize multiplicative automatic sequences as Toeplitz sequences with  a simplest period structure (for the definitions of $Per(f)$ and $\nu_p(Per(f))$, see Remark~\ref{rem:terminology}):

\begin{Prop}\label{prop:auto_Toeplitz}
Assume that $f$ is a nonzero multiplicative function. The following conditions are equivalent:
\begin{enumerate}
\item[(a)] $f$ is automatic nonsingular.
\item[(b)] $f$ is either periodic or nonperiodic Toeplitz and in the latter case there exists a unique prime $p$ with $\nu_p(Per(f))=\infty$ and the sequence $(f(p^k))_k$ is eventually periodic for this $p$.\footnote{It can be shown that, for an arbitrary Toeplitz multiplicative function $f$, if $\nu_q(Per(f))<+\infty$ for a prime $q$, then the sequence $(f(q^k))_k$ is eventually periodic. This is an easy consequence of the formula (\ref{eq:newformula}) in Proposition~\ref{prop:periodic} in the case of periodic $f$. For the remaining $f$, one first proves that the function $f'=f\cdot\chi$ is periodic, where $\chi$ is the principal Dirichlet character of modulus $\prod_{p:\nu_p(Per(f))=+\infty}p$  (here the observation in Footnote \ref{f:23} helps) and then applies the above observation to $f'$.}

\end{enumerate}
\end{Prop}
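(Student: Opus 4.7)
The argument in both directions hinges on the normal form~\eqref{form1}.

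For (a) $\Rightarrow$ (b), write $f(n)=f_1(\nu_p(n))\,f_2(n/p^{\nu_p(n)})$ with $f_1$ eventually periodic, $f_1(0)=1$, and $f_2$ nonzero periodic multiplicative of some period $M$ (necessarily coprime to $p$, since $f_2(p)=0$). For every $n=p^a m$ with $\gcd(m,p)=1$, the integer $k_n:=p^{a+1}M$ is a period at position $n$: a direct check shows $\nu_p(n+sk_n)=a$ and $(n+sk_n)/p^a\equiv m\pmod M$ for all $s\in\Z$, hence $f(n+sk_n)=f(n)$. So $f$ is Toeplitz, and each essential period divides an integer of the form $p^{A+1}M$, giving $\nu_q(Per(f))\le\nu_q(M)<\infty$ for every prime $q\ne p$. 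If $f$ is periodic, we are in the first alternative of (b). Otherwise the essential periods $(m_n)$ are unbounded, and since their $\nu_q$-values are uniformly bounded for $q\ne p$, the only possible source of unboundedness is the prime $p$, forcing $\nu_p(Per(f))=+\infty$ with $p$ being the unique such prime, and $(f(p^k))_k=(f_1(k))_k$ is eventually periodic by hypothesis.

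For (b) $\Rightarrow$ (a), the periodic case is immediate: every periodic sequence is $r$-automatic for every $r$, and fixing any prime $p$, the product $f_2:=f\cdot\chi_0$ with $\chi_0$ the principal Dirichlet character modulo $p$ is a nonzero periodic multiplicative function, while $f_1(a):=f(p^a)$ is eventually periodic because $p^a\pmod M$ is. In the nonperiodic Toeplitz case with distinguished prime $p$, the footnote to (b) supplies the essential input: since $p$ is the unique prime with $\nu_p(Per(f))=+\infty$, the modified function $f_2:=f\cdot\chi_0$ (with $\chi_0$ principal modulo $p$) is genuinely periodic. Setting $f_1(a):=f(p^a)$---eventually periodic by hypothesis and satisfying $f_1(0)=1$---and noting that $f_2$ is nonzero since $f_2(1)=1$, multiplicativity delivers
\[
f(n)=f(p^{\nu_p(n)})\,f(n/p^{\nu_p(n)})=f_1(\nu_p(n))\,f_2(n/p^{\nu_p(n)}),
\]
which is exactly the nonsingular form~\eqref{form1}.

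The main obstacle is the nonperiodic Toeplitz case of (b) $\Rightarrow$ (a), which rests on the nontrivial assertion that $f\cdot\chi_0$ is \emph{periodic} (not merely Toeplitz) whenever $p$ is the sole prime with $\nu_p(Per(f))=+\infty$; this input is itself grounded in the structural description in Proposition~\ref{prop:periodic} and in a careful accounting of how the Toeplitz period structure constrains the prime-power sequences $(f(q^k))_k$ at primes $q$ with finite $\nu_q(Per(f))$. Once this reduction is granted, the decomposition into an $f_1\circ\nu_p$ factor and a periodic multiplicative $f_2$ factor is a tautology of multiplicativity.
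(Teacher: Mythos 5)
Your $(a)\Rightarrow(b)$ argument is essentially sound, and the route you take is mildly different from the paper's: rather than showing separately that the two factors $n\mapsto f_1(\nu_p(n))$ and $n\mapsto f_2(n/p^{\nu_p(n)})$ are Toeplitz with finite $q$-valuations for $q\neq p$ and then invoking the product observation of Footnote~\ref{f:23}, you construct a period $p^{a+1}M$ directly at each position $n=p^a m$. This is cleaner, though two of your side remarks are off. The parenthetical ``necessarily coprime to $p$'' about the period $M$ of $f_2$ is false in general (take $f_2=\chi_0$, the principal character mod $p$, which has period $p$); fortunately you never use it. More substantively, ``each essential period divides an integer of the form $p^{A+1}M$'' is not what your construction gives you --- essential periods need not divide the periods you exhibit. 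What your construction does give, via observation~\eqref{uw:mul} of Remark~\ref{rem:terminology}, is precisely $\nu_q(Per(f))\le\nu_q(M)$ for every prime $q\neq p$, which is what you need; you should cite~\eqref{uw:mul} rather than a divisibility claim.

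In $(b)\Rightarrow(a)$, however, you leave the actual content of the implication unproved. Having set $f_1(k)=f(p^k)$ and $f_2=f\cdot\chi_0$, the statement to establish is that $f_2$ is \emph{periodic}, and you justify this by appealing to the footnote attached to condition~(b). But that footnote is an unproven side remark (``one first proves that $f'=f\cdot\chi$ is periodic \ldots''), not a lemma established anywhere else in the paper; it is, in fact, a corollary of the very proposition you are trying to prove. The paper's proof of $(b)\Rightarrow(a)$ is precisely a proof of this periodicity: using Theorem~\ref{th:chiporaz1} and Lemma~\ref{lem:minperiod1} one gets that $\spec(Per(f))=\{p=q_1,q_2,\ldots,q_u\}$ is finite, picks a period $m$ of the position~$1$ with $\spec(m)=\spec(Per(f))$, sets $b_j=\nu_{q_j}(Per(f))<\infty$ for $j\ge 2$, and shows, by combining Lemma~\ref{eq:periodof1}, Lemma~\ref{lem:zamiast1.9}, and iterated applications of Lemma~\ref{lem:general}, that every $n\in p^{\perp}$ lies in $Per_T(f)$ for the fixed integer $T=p^{\nu_p(m)}q_2^{b_2}\cdots q_u^{b_u}$; this is what makes $f_2$ a $T$-periodic function. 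You explicitly flag this as ``the main obstacle'' and say it ``rests on the nontrivial assertion'' that $f\cdot\chi_0$ is periodic, but you do not establish the assertion, so the nonperiodic case of $(b)\Rightarrow(a)$ is a genuine gap in your proposal.
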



\subsubsection{Pretentious functions with one Furstenberg system. The Frantzikinakis-Host conjecture} We consider now only functions from $\mathcal{M}$. As Theorem~\ref{t:glowne} shows, nonzero Toeplitz multiplicative functions (which are necessarily pretentious)  have exactly one Furstenberg system. In general, the classification of Furstenberg systems of pretentious functions has been accomplished in \cite{Fr-Le-Ru}: first of all Furstenberg systems of a fixed pretentious function are isomorphic (and either there is a one Furstenberg system or we have uncountably many of them). In case of $f\sim\chi$ any Furstenberg system is ergodic (more precisely, it is an ergodic odometer), and if $f\sim\chi\cdot n^{it}$, $t\neq 0$, any Furstenberg system is not ergodic, it has uncountably many ergodic components, each component being an ergodic odometer. It is not clear however from \cite{Fr-Le-Ru} how to recognize from $f$ that it has exactly one Furstenberg system. Using ideas from \cite{Da-De}, \cite{Kl}, \cite{Be-Ku-Le-Ri0}, we will show the following:

\begin{Th}\label{t:OK}
Let $f\in \mathcal{M}$. Suppose that $f$ is pretentious, $\mathbb{D}(f,\chi\cdot n^{it})<+\infty$ for some primitive Dirichlet character $\chi$ and $t\in\mathbb{R}$. Then the following conditions are equivalent:\\
(i) $f$ has a unique Furstenberg system, i.e.\ $f$ is generic. \\
(ii) $t=0$ and the series \beq\label{mlok}
\sum_p\frac1p\Big(1-f(p)\ov{\chi(p)}\Big)\text{ converges}.\footnote{Note that this condition is stronger than $\mathbb{D}(f,\chi)<+\infty.$}\eeq
(iii) $f$ is RAP.\footnote{\label{f:rap} $f$ is called Besicovitch rationally almost periodic (RAP, for short) if for each $\vep>0$ there exists a periodic sequence $f_{\vep}$ such that $\limsup_{N\to\infty}\frac1N\sum_{n<N}|f(n)-f_{\vep}(n)|<\vep$. Of course, each regular Toeplitz sequence is RAP.}

Moreover, the unique Furstenberg system is an ergodic odometer.\end{Th}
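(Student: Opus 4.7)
\emph{Proof plan.} I will establish the cycle $(\text{iii})\Rightarrow(\text{i})\Rightarrow(\text{ii})\Rightarrow(\text{iii})$, with the ergodic-odometer description arising from the last step. The ``dynamical'' directions use that periodic sequences generate finite, hence uniquely ergodic, subshifts; the ``arithmetic'' directions rest on Hal\'asz--Delange asymptotics for partial sums of pretentious multiplicative functions, together with the Furstenberg-system classification for pretentious $f$ from \cite{Fr-Le-Ru}.

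For $(\text{iii})\Rightarrow(\text{i})$, fix $\varepsilon>0$ and pick periodic $f_\varepsilon$ with $\limsup_N\frac{1}{N}\sum_{n\le N}|f(n)-f_\varepsilon(n)|<\varepsilon$. Each $f_\varepsilon$ has finite orbit under $S$, so its subshift is uniquely ergodic and $f_\varepsilon$ is generic. For a $1$-Lipschitz cylinder $F\colon\UU^\Z\to\C$ depending on coordinates $-K,\dots,K$,
\begin{equation*}
\Bigl|\frac{1}{N}\sum_{n\le N}F(S^n f)-\frac{1}{N}\sum_{n\le N}F(S^n f_\varepsilon)\Bigr|\le (2K+1)\cdot\frac{1}{N}\sum_{n\le N}|f(n)-f_\varepsilon(n)|,
\end{equation*}
so any two weak-$*$ limit points of $\frac{1}{N}\sum\delta_{S^n f}$ agree on $F$ up to $(2K+1)\varepsilon$; letting $\varepsilon\to 0$ and using density of such $F$ gives uniqueness.

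For $(\text{i})\Rightarrow(\text{ii})$, genericity yields convergence of $\frac{1}{N}\sum_{n\le N}f(n)$. The Hal\'asz asymptotic for pretentious $f\sim\chi\cdot n^{it}$ has the form $\frac{1}{N}\sum_{n\le N}f(n)=\frac{N^{it}}{1+it}P_\chi(N)+o(1)$ for an explicit partial Euler product $P_\chi(N)$; convergence forces $t=0$ and convergence of $P_\chi(N)$, the latter being equivalent to convergence of $\sum_p(1-f(p)\overline{\chi(p)})/p$. The vanishing of $t$ is reinforced by the dichotomy of \cite{Fr-Le-Ru}: for $t\neq 0$ the Furstenberg systems of $f$ are non-ergodic and parameterised by the limit points of $(N^{it})_N$, hence never unique.

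For $(\text{ii})\Rightarrow(\text{iii})$ and the odometer identification, given $\varepsilon>0$ choose $Q=Q(\varepsilon)$ with $\bigl|\sum_{p>Q}(1-f(p)\overline{\chi(p)})/p\bigr|<\varepsilon$ and define the multiplicative $f_\varepsilon$ by $f_\varepsilon(p^k)=f(p^k)$ for $p\le Q$ and $f_\varepsilon(p^k)=\chi(p^k)$ for $p>Q$; then $f_\varepsilon$ is periodic. Cauchy--Schwarz reduces the Besicovitch distance to the three multiplicative mean values $M(|f|^2)$, $M(|f_\varepsilon|^2)$, and $M(f\overline{f_\varepsilon})$, and the Hal\'asz asymptotic for $f\overline{f_\varepsilon}$ combined with the small-tail bound produces the near-cancellation that yields $\limsup_N\frac{1}{N}\sum_{n\le N}|f(n)-f_\varepsilon(n)|\to 0$ as $\varepsilon\to 0$, in the spirit of \cite{Da-De,Kl,Be-Ku-Le-Ri0}. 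Since the periods of the $f_\varepsilon$ form a directed system under divisibility, their finite cyclic rotational Furstenberg systems assemble into an inverse limit coinciding with the unique Furstenberg system of $f$, identifying it as an ergodic odometer. The main obstacle is the quantitative Hal\'asz estimate for $M(f\overline{f_\varepsilon})$: the tail hypothesis must be translated into control of the Euler product with an error that survives the limit $\varepsilon\to 0$, requiring care at small and degenerate Euler factors.
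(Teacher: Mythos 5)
The central gap is in your step $(\text{i})\Rightarrow(\text{ii})$, which is the real content of the theorem. Your argument relies on the fact that genericity gives convergence of $\frac{1}{N}\sum_{n\le N}f(n)$, and you then read off $t=0$ and convergence of the series from the Hal\'asz asymptotic $\frac{N^{it}}{1+it}P_\chi(N)+o(1)$. This does not work: if $P_\chi(N)\to 0$ then the mean is $0$ no matter what $t$ is, so convergence of $M(f)$ tells you nothing. The paper itself highlights exactly this pitfall in Section~\ref{s:nit}: the perturbation $f(n)=n^{it}$ for odd $n$, $f(2^k)=-2^{kit}$, has $M(f)=0$ by Hal\'asz yet $t\neq0$, and what fails for this $f$ is the \emph{second-order} correlation, not the mean. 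Your ``reinforcement'' via \cite{Fr-Le-Ru} is also not available: what \cite{Fr-Le-Ru} provides is that for $t\neq0$ no Furstenberg system is ergodic and that any two Furstenberg systems of the same $f$ are isomorphic, but it does \emph{not} show uniqueness fails for $t\neq0$; that conclusion is Corollary~\ref{c:OK1} of the present paper, derived \emph{from} Theorem~\ref{t:OK}, so citing it here would be circular.

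The paper's proof of $(\text{i})\Rightarrow(\text{ii})$ has three ingredients you need and do not have. First, (i) for $f$ automatically gives (i) for every power $f^m$; then comparing the forced Hal\'asz relations $f^{\phi(q)}(2^k)=-2^{ki\phi(q)t}$ and $f^{2\phi(q)}(2^k)=-2^{ki2\phi(q)t}$ (which follow when all these means exist and equal $0$) yields a sign contradiction, establishing $t=0$ without any assumption that $M(f)\neq 0$. Second, once $t=0$, convergence of $M(f)$ alone still does not give convergence of the \emph{full} series $\sum_p\frac{1}{p}(1-f(p)\overline{\chi(p)})$ (the imaginary part can misbehave). The paper instead observes that the correlations $\lim_N \frac1N\sum f(n)f(n+k)$ are Fourier coefficients of $\sigma_{\pi_0,\overline{\pi}_0}\ll\sigma_{\pi_0}$, which is purely atomic for pretentious $f$ with $t=0$; Remark~\ref{r:spectral} then forces some nonvanishing correlation at a shift $a>q$. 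Third, plugging this into Klurman's correlation formula from \cite{Kl} produces a convergent nonzero Euler product whose large-$p$ local factors are $1-2\frac{1-f(p)\overline{\chi(p)}}{p}+O(p^{-2})$, and taking logarithms gives the desired series convergence. None of this machinery appears in your sketch, and without it the implication is unproved.

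Two smaller points. In $(\text{ii})\Rightarrow(\text{iii})$ your $f_\varepsilon$ (defined as $f$ on primes $\le Q$ and $\chi$ on larger primes) is Toeplitz by Theorem~\ref{t:glowne} but is generally \emph{not} periodic, so it cannot directly serve as the periodic approximant in the definition of RAP; you would need a further periodic approximation of each $f_\varepsilon$. The paper avoids this by quoting Daboussi--Delange (Theorem~6 of \cite{Da-De}) and Corollary~2.11 of \cite{Be-Ku-Le-Ri}, which deliver $(\text{ii})\Leftrightarrow(\text{iii})$ cleanly. And in $(\text{iii})\Rightarrow(\text{i})$ your Lipschitz-cylinder approximation argument is a reasonable self-contained sketch of what the paper simply cites (Theorem~1.7 in \cite{Be-Ku-Le-Ri0}); that direction is fine.
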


As a consequence of this theorem, we prove that each real-valued pretentious $f\in\mathcal{M}$ has a unique Furstenberg system (which is an ergodic odometer). The same also holds for all pretentious functions taking finitely many values.

In \cite{Fr-Ho}, Frantzikinakis and Host formulated Conjecture~1  which asserts that each real-valued $f\in\mathcal{M}$ has exactly one Furstenberg system (we call it the FH-conjecture). Hence, we verified positively this conjecture in the pretentious case and we will additionally show that the following holds.

\begin{Cor}\label{c:FH10} The cE-conjecture implies  the FH-conjecture.
\end{Cor}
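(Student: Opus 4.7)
The plan is to handle real-valued $f\in\mathcal{M}$ via the pretentious/aperiodic dichotomy, using Theorem~\ref{t:OK} in the pretentious case and the cE-conjecture in the aperiodic case. If $f$ is pretentious, then $f\sim\chi\cdot n^{it}$ for some primitive $\chi$ and $t\in\R$; conjugating the relation and using $f=\bar f$ gives $f\sim\bar\chi\cdot n^{-it}$, and uniqueness of the Archimedean character best pretending to $f$ (two such characters at finite $\mathbb{D}$-distance must coincide) forces $t=0$ and $\chi$ real. With $\chi$ real the quantity $f(p)\overline{\chi(p)}$ is real for every prime, so the series in Theorem~\ref{t:OK}(ii) equals its real part $\mathbb{D}(f,\chi)^2<\infty$, whence Theorem~\ref{t:OK} yields genericity.

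Suppose now $f$ is aperiodic. The first step is to invoke the known Hal\'asz-type fact (used in \cite{Fr-Ho,Ma-Ra-Ta}) that a real-valued aperiodic multiplicative function bounded by~$1$ is automatically strongly aperiodic: a minimizer in the definition of strong aperiodicity would realize $f\sim\chi\cdot n^{it}$, and the reality argument above collapses this to ordinary pretentiousness, contradicting our assumption. For genericity it suffices, by Stone--Weierstrass on $[-1,1]^{\Z}$, to show that every polynomial moment
$$\lim_{N\to\infty}\frac{1}{N}\sum_{n<N}\prod_{j=1}^{k}f(n+b_j)^{e_j}$$
exists, for $k\ge 1$, distinct $b_j\in\Z$, and $e_j\ge 1$. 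Since each $f^{e_j}$ lies in $\mathcal{M}$, the cE-conjecture applied to $(f^{e_1},\ldots,f^{e_k})$ yields a zero limit whenever at least one $f^{e_j}$ is strongly aperiodic.

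The remaining work is a split by the parity of the $e_j$. If some $e_j$ is odd, then $f^{e_j}$ is strongly aperiodic: were $f^{e_j}\sim\chi'$ for some real character $\chi'$ (the only possibility given the reality of $f^{e_j}$), using $\chi'(p)^{e_j}=\chi'(p)$ (valid because $\chi'(p)\in\{-1,0,1\}$ and $e_j$ is odd) and $f(p)^{e_j-1}=|f(p)|^{e_j-1}\ge 0$ (since $e_j-1$ is even), one rewrites $f(p)^{e_j}\chi'(p)=|f(p)|^{e_j-1}\cdot f(p)\chi'(p)$. Convergence of $\sum_p(1-f(p)^{e_j}\chi'(p))/p$ then yields $\sum_p(1-|f(p)|)/p<\infty$ via $1-|f(p)|^{e_j}\ge 1-|f(p)|$, and the telescoping bound
$$\bigl|1-f(p)\chi'(p)\bigr|\le\bigl|1-f(p)^{e_j}\chi'(p)\bigr|+(e_j-1)\bigl(1-|f(p)|\bigr)$$
forces $f\sim\chi'$, contradicting aperiodicity. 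Hence cE gives zero. If instead all $e_j$ are even, then each $f^{e_j}\ge 0$; if any is strongly aperiodic, cE again gives zero, and otherwise every $f^{e_j}$ is pretentious, necessarily to a principal character (by non-negativity), so Theorem~\ref{t:OK} makes each $f^{e_j}$ RAP. Since the RAP class is an algebra stable under integer translations, $n\mapsto\prod_j f^{e_j}(n+b_j)$ is RAP and therefore has a mean.

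The main obstacle is the transfer of strong aperiodicity from $f$ to its odd powers $f^{e_j}$; once this comparison is secured, the rest is a direct application of Theorem~\ref{t:OK}, the cE-conjecture, and elementary closure properties of the RAP class.
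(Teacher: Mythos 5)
Your proof is correct and follows the same three-case skeleton as the paper's: treat the pretentious case via Theorem~\ref{t:OK}, then for aperiodic $f$ split the moment $\prod_j f(n+b_j)^{e_j}$ by whether some exponent is odd. The place where your route genuinely diverges is the odd-exponent case. The paper factors $f^{e_j}=f\cdot f^{e_j-1}$, observes $f^{e_j-1}\ge 0$ is RAP, and invokes Proposition~2.23 of \cite{Be-Ku-Le-Ri} (aperiodic times RAP is aperiodic, which itself rests on Frantzikinakis--Host's uniformity theorem \cite{Fr-Ho0}) to conclude that $f^{e_j}$ is aperiodic, hence strongly aperiodic by \cite{Ma-Ra-Ta}. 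You instead give a self-contained telescoping estimate: from $f^{e_j}\sim\chi'$ you first extract $\sum_p(1-|f(p)|)/p<\infty$, then use $1-|f(p)|^{e_j-1}\le (e_j-1)(1-|f(p)|)$ and $f(p)^{e_j-1}=|f(p)|^{e_j-1}$ to force $f\sim\chi'$, a contradiction. This is a more elementary argument that bypasses both Proposition~2.23 and the uniformity theorem, at the cost of being a slightly longer computation. Your even-exponent case is also organized a bit differently---you run a dichotomy on strong aperiodicity of $f^{e_j}$, whereas the paper simply notes that $f^2$ is RAP (Remark~2.12 of \cite{Be-Ku-Le-Ri}) so every even power, and hence the whole product of translates, is RAP and has a mean---but both versions are valid, and the remainder of the argument (the reality trick forcing $t=0$ and $\chi$ real, the passage from aperiodic to strongly aperiodic for real-valued functions) matches the paper's.
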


\subsection{The GHK semi-norms of pretentious functions}
In Section~\ref{s:ghk}, we discuss the problem of Gowers-Host-Kra semi-norms $\|f\|_{u^s}$, $s\in\N$, of pretentious functions $f$ in $\mathcal{M}$. While the results here are surely known for aficionados, we provide proofs for completeness. Due to \cite{Fr-Le-Ru}, we provide a dynamical proof of the fact that all the semi-norms are positive for $s\geq2$. Concentrating on $s=1$, we show that if $f\in\mathcal{M}$ is a pretentious function then $\|f\|_{u^1}=0$ if and only if the mean of $f$ is equal to~0.

Recall that $f\in\mathcal{M}$ is said to satisfy the {\em local 1-Fourier uniformity}, if
\beq\label{l1fu1}
\lim_{H\to\infty}\limsup_{M\to\infty}
\frac1M\sum_{m<M}\Big\|\frac1H\sum_{h<H}f(m+h)e^{2\pi ih\cdot}\Big\|_{C(\T)}=0\eeq
In Section~\ref{s:ghk}, we also provide a proof of the following fact:
\begin{Prop}\label{p:nonl1fu}
No pretentious $f\in\mathcal{M}$ satisfies the local 1-Fourier uniformity property.\end{Prop}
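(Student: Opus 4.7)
I plan to argue by contradiction, exploiting that by \cite{Fr-Le-Ru} every ergodic component of a Furstenberg system of a pretentious $f\in\mathcal{M}$ is an odometer. Suppose $f$ is pretentious and satisfies \eqref{l1fu1}. The first step is to introduce the continuous cylinder function
$$
G_H(x):=\sup_{\alpha\in\T}\Big|\frac{1}{H}\sum_{h<H}x_h\,e^{2\pi ih\alpha}\Big|,\qquad x\in\mathbb{U}^\Z,
$$
which depends only on the first $H$ coordinates. Then \eqref{l1fu1} reads $\lim_{H\to\infty}\limsup_{M\to\infty}\tfrac{1}{M}\sum_{m<M}G_H(S^mf)=0$, so along any subsequence $(M_k)$ producing a Furstenberg measure $\nu$ of $f$ one obtains $\int G_H\,d\nu\to 0$ as $H\to\infty$. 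The strategy is to exhibit a strictly positive $\liminf_{H\to\infty}\int G_H\,d\nu$ and derive a contradiction.

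For this lower bound I first check that the coordinate function $F_0(x):=x_0$ is non-trivial in $L^2(\nu)$. From $\mathbb{D}(f,\chi\cdot n^{it})<+\infty$ and the elementary bound $1-|f(p)|\le 1-{\rm Re}\big(f(p)\ov{\chi(p)}p^{-it}\big)$ (valid for $p\nmid q$), one infers $\sum_p(1-|f(p)|)/p<+\infty$. Applied to the non-negative multiplicative function $|f|$, Hal\'asz's theorem yields a positive mean $L:=\lim_N\tfrac{1}{N}\sum_{n\le N}|f(n)|>0$, and Cauchy--Schwarz then forces $\int|F_0|^2\,d\nu\ge L^2>0$. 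Next, consider the ergodic decomposition $\nu=\int\nu_y\,d\tau(y)$: each $\nu_y$ is an odometer, hence $L^2(\nu_y)$ is spanned by eigenfunctions $e_\lambda^{(y)}$ of modulus one, with eigenvalues $\lambda=e^{2\pi i\beta}$ in a countable subset of the unit circle. Since $F_0$ is non-trivial, on a $\tau$-positive set the expansion $F_0=\sum_\lambda c_\lambda(y)e_\lambda^{(y)}$ has at least one non-vanishing coefficient; by an exhaustion on the countable spectrum one selects measurably $\lambda_0(y)=e^{2\pi i\beta_0(y)}$ with $c_{\lambda_0(y)}(y)\ne 0$.

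Setting $\alpha(x):=-\beta_0(y(x))$ and $K_H(\mu):=\tfrac{1}{H}\sum_{h<H}\mu^h$, the spectral expansion yields
$$
\frac{1}{H}\sum_{h<H}F_0(S^hx)\,e^{2\pi ih\alpha(x)}=\sum_\lambda c_\lambda(y)\,e_\lambda^{(y)}(x)\,K_H\big(\lambda\,e^{-2\pi i\beta_0(y)}\big).
$$
Because $|K_H|\le 1$, $K_H(1)=1$ and $K_H(\mu)\to 0$ for every unit $\mu\ne 1$, dominated convergence in the orthonormal expansion on each $L^2(\nu_y)$ (with summable dominator $\sum_\lambda|c_\lambda(y)|^2$) shows the above converges to $c_{\lambda_0(y)}(y)\,e_{\lambda_0(y)}^{(y)}(x)$ in $L^2(\nu_y)$; integrating over $y$ and using that $\int\|F_0|_y\|_{L^2(\nu_y)}^2\,d\tau(y)<+\infty$ dominates, one upgrades this to $L^1(\nu)$-convergence. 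Using $|e_{\lambda_0}^{(y)}|=1$, we conclude
$$
\int G_H\,d\nu\ge\int\Big|\frac{1}{H}\sum_{h<H}F_0\circ S^h\cdot e^{2\pi ih\alpha}\Big|\,d\nu\;\longrightarrow\;\int|c_{\lambda_0(y)}(y)|\,d\tau(y)>0,
$$
contradicting $\int G_H\,d\nu\to 0$.

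The principal technical obstacle will be the measurable selection of a non-trivial eigenvalue on each ergodic component, handled by a routine exhaustion thanks to the countability of the spectra of the odometer components; the rest is a direct Fej\'er-kernel estimate. The one number-theoretic input is the positivity of $L$, which is the standard Hal\'asz/Wirsing consequence for pretentious multiplicative functions and is what prevents $F_0$ from vanishing in $L^2(\nu)$.
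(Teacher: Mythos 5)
Your argument is correct in substance but takes a genuinely different route from the paper. The paper proves Lemma~\ref{l:l1f}: using Edeko's theorem it realizes the (discrete spectrum) Furstenberg system as a factor of $R\times\mathrm{Id}_{[0,1]}$ for a \emph{single} rotation $R$ on a compact abelian group $G$, then invokes the lifting lemma for graph joinings from \cite{Ka-Ku-Le-Ru} to exhibit one frequency $\alpha$ (namely $\chi(g_0)=e^{2\pi i\alpha}$ for a suitable character $\chi\in\widehat G$) for which strong $f$-MOMO fails, and finally quotes the equivalence of \cite{Ka-Le-Ri-Te} between the short-interval condition $(\ast)$ and \eqref{l1fu1}. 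You instead work directly with a single Furstenberg measure $\nu$: pass $\|\cdot\|_{C(\T)}$ through to a continuous cylinder function $G_H$, lower-bound $\int G_H\,d\nu$ by a Fej\'er/Ces\`aro average against a frequency $\alpha(x)$ chosen per ergodic component, and use the Hal\'asz/Wirsing positivity of $M(|f|)$ (a consequence of pretentiousness) to ensure $\pi_0$ is non-trivial in $L^2(\nu)$. What the paper's route buys is that it produces \emph{one} $\alpha$ that witnesses failure (a slightly stronger statement) while completely sidestepping any issue of measurable selection across the ergodic decomposition. What your route buys is a self-contained, more elementary argument that avoids the MOMO/joining machinery and the external equivalence of \cite{Ka-Le-Ri-Te}; it exploits directly that in \eqref{l1fu1} the supremum over $\alpha$ is taken inside the average over $m$, so $\alpha$ may vary with the point.

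Two technical points deserve care. First, the measurable selection of $\lambda_0(y)$: your ``exhaustion'' does work, but the crucial fact making it routine is that all eigenvalues of all ergodic components are roots of unity (since the components are ergodic odometers, by \cite{Fr-Le-Ru}), so the candidate set is a \emph{fixed} countable set independent of $y$; the map $y\mapsto\sigma^{(y)}_{\pi_0}(\{\zeta_j\})$ is then measurable for each fixed root of unity $\zeta_j$, and one can select the first index with positive mass. (Alternatively, since \cite{Fr-Le-Ru} establishes that the whole --- possibly non-ergodic --- Furstenberg system has discrete spectrum in the sense that $L^2(\nu)$ is spanned by genuine eigenfunctions $g\circ S=\lambda g$, you could avoid the ergodic decomposition and the selection altogether by picking a single eigenvalue $\lambda_0$ with non-trivial projection of $\pi_0$, which also recovers the paper's single-$\alpha$ conclusion.) Second, your passage from $L^2(\nu_y)$-convergence on components to $L^1(\nu)$-convergence requires the dominating function to be $\|\pi_0\|_{L^2(\nu_y)}$ (not its square), but this is still $\tau$-integrable by Cauchy--Schwarz, so the dominated convergence step goes through. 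With those details filled in, the proof is sound.
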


The problem whether local 1-Fourier uniformity holds for strongly aperiodic functions is widely open. On the other hand, the MRT functions fail to satisfy this property \cite{Go-Le-Ru}.

\subsection{Relations between subclasses of aperiodic functions} While \eqref{mapapret} gives a rather complete map of natural subclasses (and relations between them) of pretentious functions from $\mathcal{M}$, the situation is much less clear for the class of aperiodic functions. In Section~\ref{s:relacje}, we show that:
\beq\label{mapaaper}\begin{array}{c}
\mbox{[strongly aperiodic $\cup$ moderately aperiodic] $\subsetneq$ aperiodic},\\
\mbox{Liouville-like $\subset$ [strongly aperiodic $\cap$ moderately aperiodic]}.\end{array}\eeq
We also show that there is a strongly aperiodic function in $\mathcal{M}$ which is not moderately aperiodic.  It is an interesting question whether there is a moderately aperiodic function which is not strongly aperiodic.

Moreover, by \cite{Ma-Ra-Ta},
$$\mbox{MRT} \cap \mbox{strongly aperiodic} =\emptyset,$$ while possible relations between the MRT class and moderately aperiodic functions seem to constitute an open problem.

\section{Toeplitz case -- proof of Theorem~\ref{t:glowne}}\label{s:toeplitz}
\begin{Remark}\label{rem:terminology}
Given a sequence $f=(f(n))\in\mathbb{C}^{\N}$, set
$$Per_m(f):=\{n\in\N\colon f(n)=f(n')\text{ for all }n=n'\text{ mod }m\}.$$
Then $f$ is called Toeplitz if $\N=\bigcup_{m\geq 1}Per_m(f)$. For each $m$, by the $m$-{\em skeleton} of $f$, one means the $m$-periodic sequence $f^{(m)}$ arising from $f$ by replacing the symbols at all positions from $\N\setminus Per_m(f)$ with a ``hole'', i.e.\ a new symbol. Whenever for $f^{(m)}$ the number $m$ is the smallest period, we say that $m$ is an {\em essential period} of $f$.
Note that if $m'$ is another essential period of $f$ so also is $\lcm(m,m')$. Indeed, if not then there exists $d|{\rm lcm}(m,m')$ which is a period of the ${\rm lcm}(m,m')$-skeleton $f^{({\rm lcm}(m,m'))}$ of $f$. If in this skeleton we replace the symbols at all positions from $Per_{{\rm lcm}(m,m')}(f)\setminus Per_m(f)$ with the holes, we still have a $d$-periodic sequence. But this sequence is exactly $m$-skeleton of $f$, so $m|d$, and our claim now follows.
\newline
By listing  $r_1<r_2<\ldots $ all essential periods and taking $m_n:={\rm lcm}(r_1\ldots r_n)$, we obtain so called {\em period structure} of $f$, that is,  a sequence $m_1|m_2|\ldots$ of essential periods such that $\N=\bigcup_k Per_{m_k}(f)$, see \cite{Wi}.  For a period structure $\underline{m}=(m_1|m_2|\ldots)$ of $f$ and a prime $p$ we denote by $\nu_p(\underline{m})$ the {\em $p$-valuation of  $\underline{m}$}, that is,  the largest number $b\in\N_0$ such that $p^b$ divides some $m_i$, if such a number $b$ exists; otherwise, we set $\nu_p(\underline{m})=\infty$. It is well known that  $\nu_p(\underline{m})$ does not depend on the choice of the period structure.\footnote{Assuming additionally that $f$ is bounded, a dynamical proof of this fact consists in showing that the associated $\underline{m}$-odometer is the maximal equicontinuous factor of the subshift determined by $f$ \cite{Wi}. Thus, if $\underline{m}'$ is another period structure, for each $i\geq1$, there is $j_i$ such that $m_i|m'_{j_i}$  (and vice-versa).}   Here is a simple argument: assume that $\underline{m}=(m_1|m_2|\ldots)$ and $\underline{m'}=(m'_1|m'_2|\ldots)$ are two period structures of $f$ and $\nu_p(\underline{m})=:b<\infty$. It follows that for every position there is an associated period $m$ with $\nu_p(m)\leq b$. Assume that $\nu_p(m'_i)> b$ for some $i$. Then, Lemma~\ref{lem:general} below yields $Per_{m'_i}(f)=Per_{p^{b-\nu_p(m'_i)}m'_i}(f)$, contradicting the fact that $m'_i$, being a member of a period structure, is an essential period. Therefore, we call $\nu_p(\underline{m})$ the {\em $p$-valuation of the periods of $f$} and denote it by $\nu_p(Per(f))$.
Note that the above argument yields that
\beq\label{uw:mul}
\begin{array}{c}\mbox{if for every position there is an associated period $m$ with  $\nu_p(m)\leq b$}\\
\mbox{then $\nu_p(Per(f))\le b$.}\end{array}\eeq
We shall use this observation several times, in particular, in the proof of Proposition~\ref{prop:auto_Toeplitz}.
By the {\em spectrum of a period structure} of $f$, denoted by $\spec(Per(f))$, we mean the set of the primes $p$ such that $\nu_p(Per(f))>0$.
\end{Remark}

\begin{Lemma}\label{lem:general}
Let $f$ be Toeplitz (not necessarily multiplicative). Let $p$ be a fixed prime and $b\in\N_0$. Assume that for every $s\in\N$ there exists a number $M\in\N$ such that $s\in Per_M(f)$ and $\nu_p(M)\leq b$.
Let $n,K\in \N$  and $n\in Per_K(f)$. Then $n\in Per_{\overline{K}}(f)$, where $\overline{K}=p^{b-\nu_p(K)}K$.
\end{Lemma}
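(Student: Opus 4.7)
The plan is to prove the lemma by a Chinese Remainder Theorem argument, replacing the ``large'' period $K$ at position $n$ with the smaller period $\overline{K}$ by going through a position $n''$ that is simultaneously congruent to $n$ modulo $K$ (so $f(n'')=f(n)$) and congruent to $n'$ modulo a ``small-$p$-part'' period $M'$ of $n'$ (so $f(n'')=f(n')$).

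First I would decompose $K=p^{a}K'$ with $a=\nu_p(K)$ and $\gcd(K',p)=1$, so that $\overline{K}=p^{b}K'$. If $a\le b$, then $K\mid\overline{K}$ and the conclusion $Per_K(f)\subseteq Per_{\overline{K}}(f)$ is immediate, so the substantive case is $a>b$. Fix an arbitrary $n'\in\N$ with $n'\equiv n\pmod{\overline{K}}$; the goal becomes showing $f(n')=f(n)$. By the standing hypothesis applied at $s=n'$, choose $M'\in\N$ with $n'\in Per_{M'}(f)$ and $\nu_p(M')\le b$, and write $M'=p^{c}M''$ with $c\le b$ and $\gcd(M'',p)=1$.

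The key step is to produce $n''\in\N$ with $n''\equiv n\pmod{K}$ and $n''\equiv n'\pmod{M'}$. By CRT the system is solvable iff $n\equiv n'\pmod{\gcd(K,M')}$. Since $\gcd(K,M')=p^{\min(a,c)}\gcd(K',M'')=p^{c}\gcd(K',M'')$ (using $c\le b<a$), and since $p^{c}$ and $\gcd(K',M'')$ are coprime, the divisibility $p^{c}\gcd(K',M'')\mid p^{b}K'=\overline{K}$ holds (as $c\le b$ and $\gcd(K',M'')\mid K'$). Thus $\overline{K}\mid n'-n$ forces $\gcd(K,M')\mid n'-n$, so the system has a solution, and by adding a sufficiently large multiple of $\lcm(K,M')$ I may take $n''\in\N$.

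Finally, $n''\equiv n\pmod{K}$ together with $n\in Per_K(f)$ gives $f(n'')=f(n)$, while $n''\equiv n'\pmod{M'}$ together with $n'\in Per_{M'}(f)$ gives $f(n'')=f(n')$, so $f(n)=f(n')$, as required. There is no real obstacle beyond bookkeeping the $p$-adic valuations; the only subtle point is making sure the CRT compatibility condition follows from the weaker congruence modulo $\overline{K}$, which is precisely why the hypothesis $\nu_p(M')\le b$ (not just $\nu_p(M')<\infty$) is used.
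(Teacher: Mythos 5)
Your proof is correct and is essentially the same argument as the paper's: both hinge on the key divisibility $\gcd(K,M')\mid\overline{K}$ (which uses $\nu_p(M')\le b$) and then produce a common auxiliary position congruent to $n$ modulo $K$ and to $n'$ modulo $M'$, transporting equality of $f$-values through it. The paper phrases this via an explicit B\'ezout identity $xK-yM=\overline{K}$ and the position $n+xKk$, whereas you invoke CRT directly to build $n''$; these are interchangeable, and your CRT framing arguably makes the compatibility condition a bit more transparent.
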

\begin{proof}
Take arbitrary $k\in\N_0$ and let $M$ be a period associated to the position $n+\overline{K}k$ and such that $\nu_p(M)\leq b$. Then $\gcd(M,K)|\overline{K}$, so there exist $x,y\in\N$ such that
$$
xK-yM=\overline{K}.
$$
It follows that
$$
n+xKk=n+\overline{K}k+yMk,
$$
whence
$$
f(n)=f(n+xKk)=f(n+\overline{K}k+yMk)=f(n+\overline{K}k),
$$ since $n\in Per_K(f)$ and $n+\overline{K}k\in Per_M(f)$, so $n\in Per_{\overline{K}}(f)$.
\end{proof}

Below, we consider $f$ which is Toeplitz and multiplicative (no restriction on the boundedness of $f$).

\begin{Remark}\label{r:uw2}
Assume that $\theta$ is a Dirichlet character of modulus  $m$. Let $q_1,\ldots,q_u$ be distinct primes. We modify $\theta$ at $q_1,\ldots,q_u$: let $\kappa_j:\N\rightarrow\C$, $j=1,\ldots,u$ be arbitrary functions. We define the multiplicative function $f$ by setting $f(1)=1$ and:
$$
f(p^k)=\left\{\begin{array}{l} \theta(p^k)\;\text{if}\;p\notin\{q_1,\ldots,q_u\},\\
\kappa_j(k)\;\text{if}\;p=q_j,j=1,\ldots, u,
\end{array}\right.
$$
where $p$ is a prime and $k\geq 1$.
\end{Remark}

\begin{Lemma}\label{lem:aboutexample} We keep the notation from Remark \ref{r:uw2}, in particular, let $f$ be the function defined there.
\begin{enumerate}
\item[(a)] Let $n\in\N$ and write $n=q_1^{v_1}\ldots q_{u}^{v_{u}}n'$, where $v_j=\nu_{q_j}(n)$ is the $q_j$-valuation of $n$. Then $n\in Per_T(f)$,
    where $T:=q_1^{v_1+1}\ldots q_{u}^{v_{u}+1}m$.
\item[(b)] $f$ is Toeplitz and  $\spec(Per(f))\subseteq \spec(m)\cup\{q_1,\ldots,q_u\}$.
\item[(c)] If in addition  every function $\kappa_j$ is of the form $\kappa_j(k)=\xi_j^k$ for some $\xi_j\in\C$, then $f$ is completely multiplicative.
\end{enumerate}
\end{Lemma}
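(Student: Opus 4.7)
For part (a), my plan is a direct computation. Writing $n=q_1^{v_1}\cdots q_u^{v_u}n'$ with $\gcd(n',q_1\cdots q_u)=1$, I would factor
\[
n+sT=q_1^{v_1}\cdots q_u^{v_u}\bigl(n'+s\,q_1\cdots q_u\,m\bigr),
\]
and observe that $n'+s\,q_1\cdots q_u\,m$ is still coprime to each $q_j$ (since $q_j$ divides the added multiple) and congruent to $n'$ modulo $m$. Multiplicativity of $f$ then splits $f(n+sT)$ as $f(q_1^{v_1})\cdots f(q_u^{v_u})\cdot f(n'+s\,q_1\cdots q_u\,m)$. On integers coprime to $q_1,\ldots,q_u$, $f$ agrees with $\theta$ by construction (as $f(p^k)=\theta(p^k)$ for all other primes and $\theta$ is multiplicative), so the last factor equals $\theta(n'+s\,q_1\cdots q_u\,m)=\theta(n')=f(n')$, the middle equality being $m$-periodicity of $\theta$. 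Multiplying back we recover $f(n)$, proving $n\in Per_T(f)$.

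For part (b), the Toeplitz property is immediate from (a): every $n\in\N$ lies in some $Per_T(f)$, so $\N=\bigcup_{M\ge 1}Per_M(f)$. For the spectrum inclusion, fix a prime $p\notin\spec(m)\cup\{q_1,\ldots,q_u\}$. By (a), each position $n$ has an associated period $T=q_1^{v_1+1}\cdots q_u^{v_u+1}m$, and by our choice of $p$ we have $\nu_p(T)=0$. Applying the observation \eqref{uw:mul} with $b=0$ yields $\nu_p(Per(f))\le 0$, hence $p\notin\spec(Per(f))$.

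For part (c), since $f$ is multiplicative it suffices to check $f(p^{k_1+k_2})=f(p^{k_1})f(p^{k_2})$ for every prime $p$ and $k_1,k_2\ge 1$. For $p\notin\{q_1,\ldots,q_u\}$ this reduces to complete multiplicativity of the Dirichlet character $\theta$, which is standard. For $p=q_j$ the assumption $\kappa_j(k)=\xi_j^k$ gives $f(q_j^{k_1+k_2})=\xi_j^{k_1+k_2}=\xi_j^{k_1}\xi_j^{k_2}=f(q_j^{k_1})f(q_j^{k_2})$, which completes the verification.

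No step here looks genuinely hard: the only moment requiring care is ensuring in (a) that the twist by $s\,q_1\cdots q_u\,m$ preserves both the $q_j$-adic valuations and the residue class modulo $m$ of the cofactor simultaneously — both properties hinge on the exponent $v_j+1$ (rather than $v_j$) in $T$, which is why that extra factor of $q_1\cdots q_u$ appears. Once that observation is in place, everything reduces to standard multiplicativity bookkeeping.
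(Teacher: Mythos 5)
Your proof is correct and follows essentially the same route as the paper's: in (a) you factor $n+sT$ through the $q_j$-adic cofactor, use that the cofactor stays coprime to $q_1\cdots q_u$ and reduces to $n'$ mod $m$, and invoke multiplicativity and the definition of $f$ off $\{q_1,\ldots,q_u\}$; (b) and (c) are then the same appeal to observation~\eqref{uw:mul} and a routine prime-power check, exactly as the paper does. The only cosmetic difference is that you make the deduction $f(n')=\theta(n')$ for $\gcd(n',q_1\cdots q_u)=1$ explicit, whereas the paper absorbs this into the displayed chain of equalities.
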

\begin{proof}
(a) (cf.\ Footnote~\ref{f:dziesiatka}) Obviously, the number $n'$ is coprime with $q_1,\ldots,q_u$, so we have
\begin{equation}\label{eq:valuen}
f(n)=f(q_1^{v_1}\ldots q_u^{v_u})\theta(n').
\end{equation}
 For each $k\geq1$, the number $n'+kq_1\ldots q_um$ is coprime with $q_1,\ldots,q_u$, therefore, by the multiplicativity of $f$,  $m$-periodicity of $\theta$ and~\eqref{eq:valuen},
$$
f(n+kT)=f(q_1^{v_1}\ldots q_u^{v_u})f(n'+kq_1\ldots q_um)=
$$$$
f(q_1^{v_1}\ldots q_u^{v_u})\theta(n'+kq_1\ldots q_um)=$$$$f(q_1^{v_1}\ldots q_u^{v_u})\theta(n')=f(q_1^{v_1}\ldots q_u^{v_u})f(n')=f(n).$$

(b) follows from (a) via \eqref{uw:mul}, whereas
(c) is obvious.
\end{proof}

Below, we prove in Theorem~\ref{th:chiporaz1} that every nonzero  Toeplitz multiplicative function has the form as described in Remark~\ref{r:uw2}.

\begin{Example}\label{ex:mod3} Let $\chi$ be the Dirichlet character of modulus~3 given by $\chi(2)=-1$. Consider
$$
f(n):=\chi(n/2^{\nu_2(n)}),$$
i.e., for all $k\geq1$, $f(2^k)=1$ and $f(p^k)=\chi(p^k)$ for the remaining primes.
It follows that $f$ is a completely multiplicative function which is Toeplitz. We will show that $f$ is not periodic. More precisely, we show that $\nu_2(Per(f))=\infty$. Suppose otherwise, and denote $b=\nu_2(Per(f))<\infty$. By Lemma~\ref{lem:aboutexample} (a), if $n=2^bn'$  (with $n'$ odd) then $2^{b+1}\cdot 3$ is a period for the position $n$. Assume $n=2^b$, all we need to show is that $2^{b+1}\cdot 3$ is the {\bf smallest} period for the position $2^b$. If this is not the case, then by    Lemma~\ref{lem:general},  $2^b\cdot3$ is a period associated to the position $2^b$:
\beq\label{jesien2}
f(2^b+2^{b}\cdot 3s)=f(2^{b})=\chi(1)=1\eeq
for all $s\geq1$. We have $$f(2^b+2^{b}\cdot 3s)=f(2^b(1+3s))=
f(2^b)f(1+3s)=f(1+3s).$$ We check however that for $s=3$, we have
$f(10)=\chi(10/2)=\chi(5)=-1$, a contradiction.
\end{Example}

Throughout, given $m\in\N$, we denote $m^{\perp}=\{n\in\N:\gcd(n,m)=1\}$.

\begin{Lemma}\label{eq:periodof1} Assume that $f$ is nonzero multiplicative and Toeplitz, and $m$ is a period associated to the position 1, that is, $1\in Per_m(f)$.  Then $m^{\perp}\subseteq Per_m(f)$ and $f(n)\neq 0$ for every $n\in m^{\perp}$.  Moreover, if $f$ is completely multiplicative, then $f(n)$ is a $\phi(m)$-root of 1.
\end{Lemma}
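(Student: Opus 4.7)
The plan is to leverage the single hypothesis $1 \in Per_m(f)$, which combined with $f(1)=1$ (forced by nonzero multiplicativity) says that $f(1+sm)=1$ for every $s\ge 0$. Thus $f$ takes the value $1$ along the whole arithmetic progression $1+m\N_0$, and the task is to transport this information to all residues coprime to $m$ via multiplicativity. Dirichlet's theorem on primes in arithmetic progressions will be the bridge.

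Concretely, fix $n\in m^{\perp}$ and let $n^{-1}$ denote its inverse in $(\Z/m\Z)^\times$. By Dirichlet, the class $n^{-1}$ contains infinitely many primes, so I can pick a prime $q > n$ with $q\equiv n^{-1}\ (\mathrm{mod}\ m)$. Then $qn\equiv 1\ (\mathrm{mod}\ m)$, hence $qn=1+sm$ for some $s\ge 0$, so $f(qn)=1$. Since $q>n$ is prime, $\gcd(q,n)=1$, and multiplicativity gives $f(q)f(n)=1$. This immediately yields $f(n)\neq 0$ (with $f(q)=f(n)^{-1}\neq 0$ as a bonus).

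Next I would show $n\in Per_m(f)$. For any $k\ge 0$, the number $n+km$ still lies in $m^\perp$ and has the same residue $n\pmod m$. Choosing a prime $q\equiv n^{-1}\pmod m$ with $q>n+km$, the same argument delivers $f(q)f(n+km)=1$. Cancelling $f(q)\ne 0$ against the identity $f(q)f(n)=1$ produces $f(n+km)=f(n)$. Since this holds for every $k\ge 0$, and since by applying the same reasoning to any $n'\equiv n\pmod m$ in $m^\perp$ the two progressions $\{n+km\}$ and $\{n'+km\}$ eventually overlap and carry a common constant value of $f$, we obtain $f(n')=f(n)$ for all $n'\in m^\perp$ with $n'\equiv n\pmod m$, i.e.\ $n\in Per_m(f)$.

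For the last statement, suppose $f$ is completely multiplicative. Then $f$ restricted to $m^\perp$ (which is closed under multiplication) is multiplicative without any coprimality proviso, and by the previous paragraph it factors through residues mod $m$. The induced map $\ov f\colon (\Z/m\Z)^\times\to\C^\times$ is a group homomorphism from a finite group of order $\phi(m)$, so its image consists of $\phi(m)$-th roots of unity, as required. There is no serious obstacle in the argument: the only ingredient beyond multiplicativity is Dirichlet's theorem, which one could equally well replace by a CRT construction producing an integer $u\equiv n^{-1}\pmod m$ with $u\equiv 1\pmod n$, guaranteeing $\gcd(u,n)=1$ without invoking primality at all.
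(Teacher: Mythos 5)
Your proof is correct and follows essentially the same route as the paper: exploit $f(1+sm)=1$, find an auxiliary $q\equiv n^{-1}\pmod m$ coprime to the relevant positions via Dirichlet's theorem, and cancel. The paper fixes one $k$ with $nk\equiv1\pmod m$ and then chooses $y$ so that $k+ym$ is coprime to both $n$ and $n+xm$, whereas you pick a single large prime $q>n+km$ in the class $n^{-1}\bmod m$, which is automatically coprime to everything smaller; this is a cosmetic simplification. For the final claim the paper applies Euler's theorem directly ($f(n)^{\phi(m)}=f(n^{\phi(m)})=f(1)=1$), while you phrase it via the induced homomorphism $(\Z/m\Z)^\times\to\C^\times$ and Lagrange — the same fact. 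One small caveat on your closing aside: the CRT construction as stated ($u\equiv n^{-1}\bmod m$, $u\equiv1\bmod n$) suffices for the non-vanishing claim but does not by itself give $\gcd(u,n+xm)=1$, which the periodicity step also needs; one would have to impose $u\equiv1\pmod{\lcm(n,n+xm)}$ (legitimate, since $\gcd(m,\lcm(n,n+xm))=1$), so the remark is recoverable but not quite as stated.
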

\begin{proof}
Let $\gcd(n,m)=1$. Then there exists $k\geq1$ such that $nk=1\mod m$, whence $f(nk)=f(1)=1$. Fix a number $x\in \N_0$. As $k$ is coprime with $m$, thanks to Dirichlet's theorem applied to the progression $k+m\Z$, there exists $y\in\N_0$ such that $k+ym$ is coprime with $n$ and also with $n+xm$.
Then
$$
f(n)f(k+ym)=f(nk+nym)=f(1)=1
$$
and
$$
f(n+xm)f(k+ym)=f(nk+(ny+xk+xym)m)=1.
$$
It follows that $f(n)=f(n+xm)\neq 0$. To prove the remaining statement note that $n^{\phi(m)}=1\mod m$, so
$1=f(1)=f(n^{\phi(m)})=f(n)^{\phi(m)}$, when $f$ is completely multiplicative.
\end{proof}

\begin{Lemma}\label{lem:completemult}
Assume that $f$  is  multiplicative and Toeplitz. Assume that $n,k\in\N$ and $n,nk\in Per_m(f)$ for a number $m$ such that  $\gcd(n,k,m)=1$. Then
$f(nk)=f(n)f(k)$.
\end{Lemma}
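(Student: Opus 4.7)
The strategy is to use Dirichlet's theorem on primes in arithmetic progressions together with the two Toeplitz relations $f(n+tm)=f(n)$ and $f(nk+sm)=f(nk)$ in order to replace $n$ and $nk$ by factorisations to which ordinary multiplicativity (on pairwise coprime factors) applies directly.

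First I would set $d:=\gcd(n,m)$ and write $n=dn'$, $m=dm'$, so that $\gcd(n',m')=1$. The hypothesis $\gcd(n,k,m)=1$ is equivalent to $\gcd(d,k)=1$, since $\gcd(n,k,m)=\gcd(\gcd(n,m),k)=\gcd(d,k)$. Now, since $\gcd(n',m')=1$, Dirichlet's theorem furnishes infinitely many primes of the form $q=n'+tm'$ with $t\in\N_0$; discarding the finitely many $q$ which divide $dk$, one may moreover assume that $q$ is coprime to both $d$ and $k$. Thus $d$, $k$, $q$ become pairwise coprime.

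With this choice of $t$, the key computation is
\[
n+tm=d(n'+tm')=dq,
\qquad
nk+ktm=k(n+tm)=kdq.
\]
By the Toeplitz relation at the position $n$ (using $n\in Per_m(f)$) the first identity gives $f(n)=f(dq)$, and by the Toeplitz relation at the position $nk$ (using $nk\in Per_m(f)$, with the admissible shift $s:=kt$) the second gives $f(nk)=f(kdq)$. Pairwise coprimality of $k,d,q$ together with the multiplicativity of $f$ then yields
\[
f(kdq)=f(k)\,f(d)\,f(q)=f(k)\,f(dq)=f(k)\,f(n),
\]
which is the desired identity $f(nk)=f(n)f(k)$.

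The only delicate point is arranging that the same $t$ yields a prime $q$ in the progression $n'+\N_0\cdot m'$ and simultaneously satisfies $\gcd(q,dk)=1$; but this is immediate from Dirichlet's theorem, since only finitely many primes divide $dk$. I do not foresee any further obstacle: the whole proof is driven by the algebraic identity $nk+ktm=k(n+tm)$, which converts Toeplitz periodicity at $nk$ into a statement about $k$ times the simpler quantity $n+tm$.
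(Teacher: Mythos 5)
Your proof is correct and follows essentially the same route as the paper: you invoke Dirichlet's theorem on the progression $n/\gcd(n,m)+\bigl(m/\gcd(n,m)\bigr)\N_0$ to produce a prime $q$, use the identity $nk+ktm=k(n+tm)$ to translate Toeplitz periodicity at $n$ and at $nk$, and conclude by multiplicativity on coprime factors. The only cosmetic difference is that you additionally arrange $\gcd(q,d)=1$ and fully factor $f(kdq)$, whereas the paper only needs $q\nmid k$ to get $\gcd(n+xm,k)=1$; both variants are sound.
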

\begin{proof}
There exists $x\in\N_0$ such that $\gcd(n+xm,k)=1$. Indeed, by Dirichlet's theorem applied to $n/\gcd(n,m)+(m/\gcd(n,m))\N$, there exists $x\in\N_0$ such that $n+xm=\gcd(n,m)q$, where $q$ is a prime not dividing $k$. Then
$\gcd(n+xm,k)=\gcd(\gcd(n,m),k)=1$. It follows that
$$
f(nk)=f(nk+xkm)=f((n+xm)k)=f(n+xm)f(k)=f(n)f(k).
$$
\end{proof}

\begin{Lemma}\label{lem:zamiast1.9}
Assume that $f$ is nonzero multiplicative and Toeplitz and let  $m$ be a period associated to the position 1. Let $n'\in m^{\perp}$ and let $t\in\N$ be a number such that $\spec(t)\subseteq spec(m)$.
Then $tn'\in Per_{tm}(f)$. In particular, each position has a period with the spectrum contained in ${\rm \spec}(m)$.
\end{Lemma}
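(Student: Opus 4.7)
The plan is to reduce everything to the congruence class of a coprime representative and then exploit the multiplicativity together with Lemma~\ref{eq:periodof1}. The key observation is the factorisation
\[
tn'+k\cdot tm = t(n'+km),
\]
which must hold for every $k\geq 0$ if we want $tn'\in Per_{tm}(f)$.

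First I would establish the coprimality needed for multiplicativity. Since $\spec(t)\subseteq \spec(m)$, every prime dividing $t$ divides $m$, and because $\gcd(n',m)=1$ this forces $\gcd(t,n')=1$. The same primes are the only primes dividing $t$, and for each such prime $p$ we have $p\mid m$, so $n'+km\equiv n'\pmod p$; hence $n'+km$ is coprime to $t$ as well. By multiplicativity,
\[
f(tn')=f(t)f(n'),\qquad f(t(n'+km))=f(t)f(n'+km).
\]

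Next I would show the two factors $f(n')$ and $f(n'+km)$ agree. Note $\gcd(n'+km,m)=\gcd(n',m)=1$, so both $n'$ and $n'+km$ lie in $m^{\perp}$. By Lemma~\ref{eq:periodof1}, $m^{\perp}\subseteq Per_m(f)$, which together with the congruence $n'+km\equiv n'\pmod m$ yields $f(n'+km)=f(n')$. Combining with the displayed identities above gives $f(tn'+k\cdot tm)=f(tn')$ for all $k\geq 0$, i.e.\ $tn'\in Per_{tm}(f)$.

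For the ``in particular'' statement I would argue as follows. Given any position $N\in\N$, set
\[
t:=\prod_{p\in\spec(m)}p^{\nu_p(N)},\qquad n':=N/t.
\]
Then $\spec(t)\subseteq \spec(m)$ and, since all primes of $\spec(m)$ have been stripped out, $n'\in m^{\perp}$. The main assertion shows that $tm$ is a period for the position $N=tn'$, and clearly $\spec(tm)=\spec(t)\cup\spec(m)=\spec(m)$. No step seems to present a real obstacle here; the only mild subtlety is verifying the coprimality $\gcd(t,n'+km)=1$ that legitimises the use of multiplicativity, which is precisely where the hypothesis $\spec(t)\subseteq\spec(m)$ is used.
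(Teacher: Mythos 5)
Your proof is correct and follows essentially the same route as the paper: factor $tn'+ktm=t(n'+km)$, use $\spec(t)\subseteq\spec(m)$ together with $\gcd(n',m)=1$ to obtain the coprimality needed for multiplicativity, and then invoke Lemma~\ref{eq:periodof1} to get $f(n'+km)=f(n')$. Your treatment of the ``in particular'' clause (stripping off the $\spec(m)$-part of $N$) is also exactly the intended argument, merely written out where the paper says it follows easily.
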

\begin{proof}
Let $x\in\N_0$. Then
$$
f(tn'+xtm)=f(t(n'+xm))=f(t)f(n'+xm)=f(t)f(n')=f(tn').
$$
Here, we use the fact that, since every prime factor of $t$ divides $m$ and $\gcd(n',m)=1$,  $\gcd(t,n'+xm)=\gcd(t,n')=1$, hence the middle term equality follows. We apply Lemma~\ref{eq:periodof1} (to $n'$) to justify the third equality. The last statement follows easily.
\end{proof}

\begin{Lemma}\label{lem:finitespec}
Assume that $f$ is nonzero multiplicative and Toeplitz and let  $m$ be a period associated to the position 1. Then $\spec(Per(f))\subseteq \spec(m)$, in particular, the spectrum of a period structure of $f$ is finite.
\end{Lemma}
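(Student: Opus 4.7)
The plan is to combine the previous lemma with the observation \eqref{uw:mul} from Remark~\ref{rem:terminology}. First I would fix a prime $p\notin \spec(m)$ and show that $\nu_p(Per(f))=0$, which is exactly the containment $\spec(Per(f))\subseteq \spec(m)$.

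By Lemma~\ref{lem:zamiast1.9}, every position $n\in \N$ admits an associated period $M_n$ with $\spec(M_n)\subseteq \spec(m)$. Since $p\notin \spec(m)$, this forces $\nu_p(M_n)=0$ for every $n$. Applying the principle \eqref{uw:mul} (with $b=0$) then yields $\nu_p(Per(f))\leq 0$, so $p\notin \spec(Per(f))$. As $p\notin \spec(m)$ was arbitrary, $\spec(Per(f))\subseteq \spec(m)$.

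Finally, $m\in\N$ so $\spec(m)$ is finite, giving the concluding ``in particular'' statement.

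There is no real obstacle: the two substantive ingredients, namely Lemma~\ref{lem:zamiast1.9} and the valuation-from-positions principle \eqref{uw:mul}, have already been established, and the argument is a clean one-line combination of them.
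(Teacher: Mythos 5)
Your proposal is correct and follows essentially the same route as the paper's proof: both first invoke Lemma~\ref{lem:zamiast1.9} to get that every position has an associated period with spectrum in $\spec(m)$, and both then pass to $\spec(Per(f))\subseteq\spec(m)$; you do this by citing~\eqref{uw:mul} as a black box, while the paper re-runs the underlying argument directly via Lemma~\ref{lem:general} applied to a period structure, but~\eqref{uw:mul} is itself derived from Lemma~\ref{lem:general} in Remark~\ref{rem:terminology}, so the content is the same.
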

\begin{proof}
It follows from Lemma~\ref{lem:zamiast1.9} that
 each position has a period with spectrum contained in ${\rm \spec}(m)$. Assume that $(m_n)$ is a period structure of $f$. If $q$ is a prime that is not in ${\rm \spec}(m)$ then, by   Lemma~\ref{lem:general} (applied for $p=q$ and $b=0$), the $m_n$-skeleton of $f$ coincides with the  $m_n/q^{\nu_q(m_n)}$-skeleton (see Remark \ref{rem:terminology}) for every $n$, which shows, as $m_n$ are essential periods, that $q\nmid m_n$. This shows that $\spec(Per(f))\subseteq \spec(m)$.
\end{proof}

\begin{Lemma}\label{lem:minperiod1}
Assume that $f$ is nonzero multiplicative and Toeplitz. Then
$$
\{m\in\N:1\in Per_{m}(f)\}=m_1\N,
$$
for some $m_1\in\N$, that is, the  minimal period $m_1$ associated to the position 1 divides  every period of 1. Moreover,  $\spec(Per(f))= \spec(m_1)$.
\end{Lemma}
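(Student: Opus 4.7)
The plan is to show $m_1 \mid m$ for every $m \in S := \{m \in \N : 1\in Per_m(f)\}$ by exploiting the multiplicative structure of $f$ together with the Dirichlet-character behaviour of $f|_{m^\perp}$ supplied by Lemma~\ref{eq:periodof1}. Set $m_1 := \min S$ (nonempty since $f$ is Toeplitz). The key preliminary observation is that for every $m\in S$ the restriction $f|_{m^\perp}$ descends to a Dirichlet character $\chi_m$ on $(\Z/m\Z)^*$: Lemma~\ref{eq:periodof1} gives constancy of $f$ on residue classes inside $m^\perp$, and Lemma~\ref{lem:completemult}, applied to coprime representatives supplied by Dirichlet's theorem, yields the homomorphism property $\chi_m(\bar a\bar b)=\chi_m(\bar a)\chi_m(\bar b)$. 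Comparing $\chi_{m_1}$ and $\chi_m$ on the dense set $(m_1m)^\perp$, where both coincide with $f$, forces them to be induced from a single primitive character $\chi$ of some conductor $q$, and this $q$ divides both $m_1$ and $m$, hence divides $d:=\gcd(m_1,m)$.

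For the main step I argue by contradiction: suppose $m\in S$ and $d<m_1$; I will show $d\in S$. Fix an arbitrary $n\equiv 1\pmod d$, so $n$ is coprime to $d$. The three sets $\spec(m)\setminus\spec(d)$, $\spec(m_1)\setminus\spec(d)$ and $\PP\setminus(\spec(m_1)\cup\spec(m))$ are pairwise disjoint (a prime common to $\spec(m_1)$ and $\spec(m)$ automatically lies in $\spec(d)$), so I split $n=n_1n_2n_3$ as a coprime product where $n_i$ collects the prime-power factors of $n$ coming from the $i$-th set. One checks that $n_1,n_3$ are coprime to $m_1$, while $n_2,n_3$ are coprime to $m$. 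Using multiplicativity of $f$, the character identifications above, and multiplicativity of $\chi$,
\[
f(n)=f(n_1)f(n_2)f(n_3)=\chi_{m_1}(n_1n_3)\,\chi_m(n_2)=\chi(n_1n_2n_3)=\chi(n).
\]
Since $q\mid d$ and $n\equiv 1\pmod d$, the right-hand side equals $\chi(1)=1$. Hence $d\in S$, contradicting $m_1=\min S$.

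Finally, for the spectral equality, the inclusion $\spec(Per(f))\subseteq\spec(m_1)$ is immediate from Lemma~\ref{lem:finitespec} applied to the period $m_1$, while the reverse follows because if $p\mid m_1$ and $\nu_p(Per(f))=0$, Lemma~\ref{lem:general} with $b=0$ would shrink $m_1$ by the factor $p^{\nu_p(m_1)}$, contradicting minimality. The main obstacle I anticipate is the bookkeeping around the common primitive character: verifying that $\chi_{m_1}$ and $\chi_m$ really are induced from a single $\chi$ whose conductor divides $d$, and that the tripartite decomposition $n=n_1n_2n_3$ sits in the right coprimality regions so that each factor is accessible to exactly one of the characters. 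Once that is in place the computation collapses to the single displayed line above.
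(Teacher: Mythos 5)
Your proof is correct, but it takes a genuinely different route from the paper's. The paper's proof of Lemma~\ref{lem:minperiod1} stays completely elementary: after normalizing via Lemma~\ref{lem:general} so that both $t_1,t_2\in S:=\{m:1\in Per_m(f)\}$ have spectrum equal to $\spec(Per(f))$, it writes $t:=\gcd(t_1,t_2)=x_1t_1-x_2t_2$ and observes that $1+kt=1+kx_1t_1-kx_2t_2\equiv 1+kx_1t_1\pmod{t_2}$, with coprimality to $t_2$ automatic from the equality of spectra; Lemma~\ref{eq:periodof1} then gives $f(1+kt)=f(1+kx_1t_1)=f(1)$, so $t\in S$. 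Your argument instead frontloads the Dirichlet-character machinery: you construct the characters $\chi_m$ for every $m\in S$ (essentially rehearsing the proof of Theorem~\ref{th:chiporaz1}, which indeed only needs Lemmas~\ref{eq:periodof1} and \ref{lem:completemult}, so there is no circularity), extract a common primitive character of conductor $q\mid\gcd(m_1,m)$ (this is exactly the content of Remark~\ref{rem:pushout_nowa}(b), stated later in the paper but independent of this lemma), and then use a three-way coprime factorization of $n\equiv1\pmod d$ to evaluate $f(n)=\chi(n)=1$. Both are valid; the paper's B\'ezout argument is leaner and self-contained, while yours buys extra structure (the supporting character of $f$ and its conductor appear already at this stage) at the cost of importing the common-primitive-character fact, and needs the bookkeeping you yourself flag — checking that $\spec(m_1)\cap\spec(m)=\spec(d)$ so that the tripartite split lands each factor in the domain of exactly one character, which you do correctly. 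Your treatment of the spectral equality via Lemma~\ref{lem:finitespec} and Lemma~\ref{lem:general} with $b=0$ matches the paper's.
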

\begin{proof}
By Lemma~\ref{lem:finitespec}, if $1\in Per_{m}(f)$ then $\spec(Per(f))\subseteq \spec(m)$. If $q\notin \spec(Per(f))$ for a prime $q$, then by Lemma \ref{lem:general} (applied to $p=q$, $b=0$, $n=1$ and $K=m$),
$1\in Per_{m/q^{\nu_q(m)}}(f)$, so every period $m$ associated to the position 1 is divisible by a period $m'$ associated to 1 such that $\spec(m')= \spec(Per(f))$. It follows that $\spec(Per(f))= \spec(m_1)$ if $m_1$ is the minimal period associated to the position 1. It remains to prove that if $t_1$ and $t_2$ are periods associated to 1, then $\gcd(t_1,t_2)$ is also such a period.  Let $t_1,t_2$ be numbers such that $1\in Per_{t_i}(f)$ and we can assume that additionally $\spec(t_i)=\spec(Per(f))$ for $i=1,2$.  Let $t=\gcd(t_1,t_2)$, thus $t=x_1t_1-x_2t_2$ for some $x_1,x_2\in\N$. Then, for every $k\in\N_0$, we have
$$
f(1+kt)=f(1+kx_1t_1-kx_2t_2)=f(1+kx_1t_1)=f(1).
$$
Indeed, the middle term equality follows since $\gcd(1+kx_1t_1-kx_2t_2,t_2)=1$ by the assumption that $\spec(t_1)=\spec(t_2)$, so $1+kx_1t_1-kx_2t_2\in Per_{t_2}(f)$ by Lemma~\ref{eq:periodof1}. It follows that $1\in Per_t(f)$.
\end{proof}


\begin{Prop}\label{cor:regular}
If $f$ is Toeplitz and multiplicative, then it is regular.
\end{Prop}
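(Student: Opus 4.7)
The plan is to verify directly the density condition in the definition of regularity from footnote~\ref{ft:reg}: for a period structure $(m_n)$ of $f$, I need the proportion of positions $j\in[1,m_n]$ that lie in $Per_{m_n}(f)$ to tend to~$1$. By Lemma~\ref{lem:minperiod1}, the spectrum $\spec(Per(f))$ is the finite set $\{q_1,\ldots,q_u\}=\spec(m_1)$, where $m_1$ is the minimal period of position~$1$. Setting $b_i:=\nu_{q_i}(Per(f))$, I split the indices into $S_\infty=\{i:b_i=\infty\}$ and $S_{\mathrm{fin}}=\{i:b_i<\infty\}$. For the period structure, $\nu_{q_i}(m_n)\to\infty$ on $S_\infty$, while $\nu_{q_i}(m_n)$ stabilizes at $b_i$ on $S_{\mathrm{fin}}$.

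The central input is Lemma~\ref{lem:zamiast1.9}: writing $j=q_1^{v_1}\cdots q_u^{v_u}j'$ with $j'$ coprime to $q_1\cdots q_u$, the position $j$ admits the period $q_1^{v_1}\cdots q_u^{v_u}m_1$. For the primes $q_i$ in $S_{\mathrm{fin}}$ this period has too large a $q_i$-valuation, so I would apply Lemma~\ref{lem:general} (with $p=q_i$, $b=b_i$) to trim it down to $b_i$. The outcome is a period $K^*$ of $j$ with $\nu_{q_i}(K^*)=\min(v_i+\nu_{q_i}(m_1),b_i)$ on $S_{\mathrm{fin}}$ and $\nu_{q_i}(K^*)=v_i+\nu_{q_i}(m_1)$ on $S_\infty$. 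Once $n$ is large enough that $\nu_{q_i}(m_n)=b_i$ for each $i\in S_{\mathrm{fin}}$, the divisibility $K^*\mid m_n$ (and hence $j\in Per_{m_n}(f)$) reduces to the condition $v_i\le\nu_{q_i}(m_n)-\nu_{q_i}(m_1)$ for every $i\in S_\infty$.

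A trivial counting argument then finishes the proof: the positions $j\in[1,m_n]$ that can fail this condition are precisely those divisible by $q_i^{\nu_{q_i}(m_n)-\nu_{q_i}(m_1)+1}$ for some $i\in S_\infty$, and their density is bounded by
\[
\sum_{i\in S_\infty}\frac{1}{q_i^{\nu_{q_i}(m_n)-\nu_{q_i}(m_1)+1}}\longrightarrow 0
\]
because $S_\infty$ is finite and $\nu_{q_i}(m_n)\to\infty$ on $S_\infty$.

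The main obstacle is the bookkeeping for the primes in $S_{\mathrm{fin}}$. Without the trimming step, the naive period coming from Lemma~\ref{lem:zamiast1.9} could have $q_i$-valuation far larger than $\nu_{q_i}(m_n)$, which would spoil the divisibility $K^*\mid m_n$. Lemma~\ref{lem:general} is exactly the tool that repairs this, reducing the analysis to the primes in $S_\infty$, where the density computation is transparent.
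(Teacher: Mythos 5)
Your proof is correct, and it shares the core ingredients (Lemma~\ref{lem:minperiod1} to identify the finite spectrum, Lemma~\ref{lem:zamiast1.9} to produce the explicit period $q_1^{v_1}\cdots q_u^{v_u}m_1$ for a position $n=q_1^{v_1}\cdots q_u^{v_u}n'$), but it takes a more laborious route than the paper's. The paper sidesteps the split into $S_\infty$ and $S_{\mathrm{fin}}$ and the trimming step via Lemma~\ref{lem:general} entirely, by using the observation (recorded in a footnote to its proof) that a Toeplitz $f$ is regular as soon as one exhibits \emph{any} sequence of periods $T_i$ for which the natural density of $\N\setminus Per_{T_i}(f)$ tends to $0$ --- the $T_i$ need not be essential periods or form a period structure. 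So the paper takes $T=q_1^{v_1}\cdots q_u^{v_u}m_1$ directly, notes that $\N\setminus Per_T(f)$ is contained in the set of multiples of $\{q_1^{v_1+1},\ldots,q_u^{v_u+1}\}$, and lets all $v_j\to\infty$. Your argument instead verifies the regularity condition against a genuine period structure $(m_n)$, which is why you must trim the $q_i$-valuations of $T$ down to $b_i$ for $i\in S_{\mathrm{fin}}$ before divisibility $K^*\mid m_n$ can hold; that extra bookkeeping is avoidable, though your conclusion is sound. One small notational wrinkle: you write $m_1$ both for the first term of the period structure and for the minimal period of position~$1$; the latter is indeed an essential period, but it need not be the smallest one, so it need not equal the first term $m_1$ of the period structure. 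Using a separate symbol, say $m^*$, for the minimal period of position~$1$ removes the ambiguity, and then all you use is that $m^*\mid m_n$ for $n$ large enough, which holds since $\nu_{q_i}(m^*)\le\nu_{q_i}(Per(f))$ and $\nu_{q_i}(m_n)\to\nu_{q_i}(Per(f))$.
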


\begin{proof} It is enough to prove the statement for nonzero $f$.
By Lemma \ref{lem:minperiod1}, we know that the spectrum of $Per(f)$ is finite and $\spec(Per(f))= \spec(m_1)$, where $m_1$ is a period associated to the position 1. Let $\spec(Per(f))=\{q_1,\ldots,q_u\}$.
Let $n\in\N$ and write $n=q_1^{v_1}\ldots q_u^{v_u}n'$, where $n'$ is coprime with $q_1\ldots q_u$.  By Lemma~\ref{eq:periodof1}, $n'\in Per_{m_1}(f)$, so by  Lemma~\ref{lem:zamiast1.9} (applied to $n'$ and $t=q_1^{v_1}\ldots q_u^{v_u}$),  we deduce that
$$
n\in Per_{q_1^{v_1}\ldots q_u^{v_u}m_1}(f)
$$
for every $n\in\N$. It follows that
if
$T=q_1^{v_1}\ldots q_{u}^{v_{u}}m_1$
for some  $v_1,\ldots,v_u\in\N$, then $\N\setminus Per_T(f)$ is contained in the set of the multiples of the set $\{q_1^{v_1+1},\ldots, q_u^{v_u+1}\}$. The natural density of that set of multiples, hence also the density of the set $\N\setminus Per_{T}(f)$, converges to 0 as $v_j\rightarrow +\infty$ for $j=1,\ldots,u$. This proves the regularity of $f$.
\footnote{If $f$ is Toeplitz and $(t_i)_i$ is any sequence of natural numbers such that the natural density of the set $\N\setminus Per_{t_i}(f)$ converges to 0 as $i\mapsto +\infty$ then $f$ is regular. We do not need to assume that $t_i$ form a period structure.}
\end{proof}

\begin{Th}\label{th:chiporaz1}
Assume that $f$ is a nonzero Toeplitz multiplicative function and $1\in Per_m(f)$, where $\spec(m)=\spec(Per(f))$.\footnote{The existence of $m$ follows from Lemma \ref{lem:minperiod1}.}   There exists a unique  Dirichlet character $\chi$ of modulus   $m$  such that
$f(n)=\chi(n)$ for $n\in m^{\perp}$.~\footnote{\label{f:observe}Observe that $\chi$ does not depend on the choice of $m$, i.e. if $1\in Per_{m'}(f)$, where $\spec(m')=\spec(Per(f))$ and $\chi'$ is the unique Dirichlet character of modulus $m'$ satisfying $f(n)=\chi'(n)$ for $n\in {m'}^\perp$, then $\chi=\chi'$.}
\end{Th}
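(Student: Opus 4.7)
The plan is to define $\chi$ on $m^\perp$ by $\chi(n):=f(n)$, push this map through the quotient $(\Z/m\Z)^*$, verify it is a group homomorphism, and extend by zero to obtain a Dirichlet character of modulus $m$. Uniqueness will then be immediate since two Dirichlet characters of modulus $m$ that agree on $m^\perp$ must coincide.

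For the first step I would invoke Lemma~\ref{eq:periodof1}, which simultaneously supplies two crucial facts: $m^\perp\subseteq Per_m(f)$, so $f$ restricted to $m^\perp$ depends only on residue classes modulo $m$; and $f(n)\neq 0$ for every $n\in m^\perp$. Together these say that $\chi:(\Z/m\Z)^*\to \C^*$ defined by $\chi(n\bmod m):=f(n)$ is a well-defined, nowhere-vanishing map, with $\chi(1)=f(1)=1$.

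Next I would verify multiplicativity. Given $n,k\in m^\perp$, the condition $\gcd(n,k,m)=1$ is automatic from $\gcd(n,m)=1$; moreover $n\in Per_m(f)$ and $nk\in m^\perp\subseteq Per_m(f)$. Lemma~\ref{lem:completemult} then gives $f(nk)=f(n)f(k)$, so $\chi$ is a group homomorphism on the finite group $(\Z/m\Z)^*$ of order $\phi(m)$. Consequently every value of $\chi$ is a $\phi(m)$-th root of unity, and extending $\chi$ by zero on $\N\setminus m^\perp$ yields a bona fide Dirichlet character of modulus $m$ satisfying $f(n)=\chi(n)$ for all $n\in m^\perp$.

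For uniqueness, suppose $\chi'$ is another Dirichlet character of modulus $m$ with $\chi'(n)=f(n)=\chi(n)$ for every $n\in m^\perp$; by definition $\chi'$ vanishes off $m^\perp$ just as $\chi$ does, forcing $\chi'=\chi$. There is no real obstacle in the argument: every step reduces to a direct application of the two lemmas already established. The only point requiring care is recognising that the coprimality hypothesis of Lemma~\ref{lem:completemult} is trivially met when $n\in m^\perp$. I note that the additional assumption $\spec(m)=\spec(Per(f))$ is not used in the existence proof itself; it is what makes $\chi$ canonical and underlies the independence of $\chi$ from the choice of $m$ recorded in Footnote~\ref{f:observe}, which one would presumably verify right after the theorem by comparing the characters associated to two such moduli on the intersection of their unit groups.
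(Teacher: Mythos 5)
Your proof is correct and follows essentially the same route as the paper: define $\chi$ to equal $f$ on $m^\perp$ and $0$ elsewhere, use Lemma~\ref{eq:periodof1} to get well-definedness modulo $m$ together with non-vanishing, and use Lemma~\ref{lem:completemult} to get (complete) multiplicativity. Your observation that the hypothesis $\spec(m)=\spec(Per(f))$ is not needed for existence but only for the canonicity in Footnote~\ref{f:observe} is accurate and matches the structure of the paper's argument.
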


\begin{proof}   Let $\chi$ be the function defined by
$$
\chi(n)=\left\{\begin{array}{l}
f(n)\;\text{if}\; n\in m^{\perp},\\
0\; \text{otherwise}.
\end{array}\right.
$$
We claim that $\chi$ is $m$-periodic. Indeed, clearly, $n$ is coprime with $m$ if and only if  $n+m$ is also coprime with $m$, so   $\chi(n)=f(n)=f(n+m)=\chi(n+m)$ for $n\in m^{\perp}$. Besides, $\chi(n)=f(n)\neq0$ for $n\in m^\perp$ by Lemma~\ref{eq:periodof1}. If $n\notin m^{\perp}$ then
$\chi(n)=0=\chi(n+m)$ and the claim follows.

Moreover, $\chi$ is completely multiplicative. Indeed, if $n$ and $k$ are coprime with $m$, then by Lemma~\ref{eq:periodof1}, $n,nk\in Per_m(f)$ and by Lemma~\ref{lem:completemult}, $\chi(nk)=f(nk)=f(n)f(k)=\chi(n)\chi(k)$. Otherwise, $\chi(nk)=0=\chi(n)\chi(k)$. Therefore, $\chi$ is  $m$-periodic. It follows that $\chi$ is a  Dirichlet character of modulus $m$. Moreover, $f(n)=\chi(n)$ for $n\in m^{\perp}$ and this condition determines $\chi$ uniquely.
\end{proof}

\begin{Remark}\label{rem:pushout_nowa} Let us collect some facts on Dirichlet characters (comp.   \cite[Chapter 5]{Dav}).
\begin{enumerate}
\item[(a)]
Given a character $\chi$ there is the minimal modulus of $\chi$ which divides every other modulus of $\chi$. If $\chi$ is a Dirichlet character of modulus $m$ and $\chi$ is induced by a character $\theta$, that is, $\chi(n)=\theta(n)$ for every $n\in m^{\perp}$, then the minimal modulus of $\theta$ divides $m$.
\item[(b)] The relation of the induction is a partial order in the set of all Dirichlet characters, we write $\theta\preceq\chi$ if $\theta$ induces $\chi$. It turns out that the set of the characters $\psi$ such that $\psi\preceq\chi$ for a fixed $\chi$ has the smallest element $\theta$. This is the unique primitive (that is, not induced by any other character) character that induces $\chi$. The minimal modulus $t$ of $\theta$ is the {\em conductor}  of $\chi$\footnote{Differently than in \cite{Dav}, we allow that $\chi$ is a principal character in which case  $t=1$.}, see e.g. \cite[Chapter 4]{Koch}.
    For the convenience of the reader we sketch the main arguments.
The crucial observation is that, given a character $\chi$, for  every two characters $\theta',\theta''\preceq \chi$ there exists $\theta\preceq \theta',\theta''$.
Suppose that $\theta'$ and $\theta''$ are characters of moduli $m'$ and $m''$ that induce $\chi$. For $n\in\lcm(m',m'')^{\perp}$ we can find $k\in\N_0$ such that $n_1:=n+k\lcm(m',m'')\in m^{\perp}$ (we apply the usual argument based on  Dirichlet's theorem). Then $\theta'(n)=\theta'(n_1)=\theta''(n_1)=\theta''(n)$, so $\theta'$ and $\theta''$ coincide on $\lcm(m',m'')^{\perp}$. Now, take $n\in \gcd(m',m'')^{\perp}$. The set $(n+\gcd(m',m'')\N_0)\cap \lcm(m',m'')^{\perp}$ is nonempty and both $\theta'$ and $\theta''$ are constant on this set. Indeed, choose $x,y\in\N$ such that $xm'-ym''=\gcd(m',m'')$ and assume that $n_1,n_2:=n_1+k\gcd(m',m'')\in \lcm(m',m'')^{\perp}$ for some $k\in\N_0$. Then
$n_2=n_1+kxm'-kym''$ and it is easy to see that $n_3:=n_1+kxm'\in \lcm(m',m'')^{\perp}$. Therefore
$$\theta''(n_1)=\theta'(n_1)=\theta'(n_3)=\theta''(n_3)=\theta''(n_2)=\theta'(n_2),$$
as $\theta'$ and $\theta''$ coincide on $\lcm(m',m'')^{\perp}$ and $\theta'$ (resp. $\theta''$) is $m'$ (resp. $m''$) periodic. We define $\theta(n)$ to be the value of $\theta'$ and $\theta''$ on $(n+\gcd(m',m'')\N_0)\cap \lcm(m',m'')^{\perp}$, provided $n\in\gcd(m',m'')^{\perp}$ and $\theta(n)=0$ otherwise. One can prove that $\theta$ is a Dirichlet character that induces both $\theta'$ and $\theta''F$, so if the latter are primitive, that is: $\prec$-minimal, then $\theta'=\theta=\theta''$.
\item[(c)]
Now, let us turn to the situation of Theorem \ref{th:chiporaz1}. If $\chi$ is the (see Footnote~\ref{f:observe}) character corresponding to a Toeplitz multiplicative function $f$, then we call the unique primitive character $\theta$ that induces $\chi$ the {\em supporting character} of $f$. The conductor $t$ of $\theta$ (and $\chi$) divides every period of $f$ associated to the position~1 by Theorem \ref{th:chiporaz1}. Moreover, $t$ divides a modulus of every character that induces $\chi$. In particular, if $f$ is $M$-periodic for some $M\in\N$, then $t|M.$
\end{enumerate}
\end{Remark}

\noindent
{\em Proof of Theorem~\ref{t:glowne}.} The proof follows immediately  from Theorem~\ref{th:chiporaz1} and Proposition~\ref{cor:regular}.

\vspace{2ex}

\noindent
{\em Proof of Proposition~\ref{prop:auto_Toeplitz}.}
(a)$\Rightarrow$(b)  By Theorem 1.1 in \cite{Ko-Le-Mu} (see \eqref{form1}) there exist a prime $p$ and two functions $f_1$, $f_2$ such that for every $n\in\N$,
$$
f(n)=f_1(\nu_p(n))f_2\Big(\frac{n}{p^{\nu_p(n)}}\Big)
$$
and $f_1(0)=1$, $f_1$ is eventually periodic and, since $f$ in nonsingular,  $f_2$ is a nonzero multiplicative periodic function such that $f_2(p^k)=0$ for every $k>0$. Observe that both functions:
$$
n\mapsto  f_1(\nu_p(n))\;\text{and}\;n\mapsto  f_2\Big(\frac{n}{p^{\nu_p(n)}}\Big)
$$
are Toeplitz and for every prime $q\neq p$ the $q$-valuations of the periods of both of them are finite.
Indeed, this follows from Remark~\ref{rem:terminology} since for every $n,k\in\N$, we have:
$$
\nu_p(n+kp^{\nu_p(n)+1})=\nu_p(n)\;\text{and}\; f_2\Big(\frac{n+kp^{\nu_p(n)+1}m}{p^{\nu_p(n+kp^{\nu_p(n)+1}m)}}\Big)=f_2\Big(\frac{n}{p^{\nu_p(n)}}\Big),
$$ where $m$ is a period of $f_2$: use~\eqref{uw:mul} for $q$ (instead of $p$) and $b=0$ for the first function and $b=\nu_q(m)$ for the second function.
Moreover, $f(p^k)=f_1(k)f_2(1)=f_1(k)$\footnote{$f_2(1)=1$ since $f$ is nonzero.}, so the sequence $(f(p^k))_k$ is eventually periodic by the assumption on $f_1$. For every prime $q\neq p$ the $q$-valuation of the periods of $f$ is finite \footnote{\label{f:23} We apply the following observation: assume that $g_1$ and $g_2$ are Toeplitz functions (not necessarily multiplicative). Assume that $n\in Per_{k_1}(g_1)\cap Per_{k_2}(g_2)$. Then $n\in Per_{\lcm(k_1,k_2)}(g_1g_2)$. It follows  that, for every prime $q$, $\nu_q(Per(g_1g_2))\le \max(\nu_q(Per(g_1)),\nu_q(Per(g_2)))$.} and (b) follows.

(b)$\Rightarrow$(a) The statement is clear when $f$ is periodic. Suppose otherwise and let $p$ be the unique prime with infinite valuation of the periods of  $f$. We define $f_1(k):=f(p^k)$. As $f$  is nonzero,    $f_1(0):=f(1)=1$ and $f_1$ is eventually periodic by the assumption on $f$. Set $f_2(n'):=f(n')$ for $n'\in p^{\perp}$ and $f_2(p^k)=0$ for every $k>0$. Then $f(n)=f_1(\nu_p(n))f_2\Big(\frac{n}{p^{\nu_p(n)}}\Big)$ for every $n\in \N$. It is enough to prove that $f_2$ is periodic. By Theorem \ref{th:chiporaz1}, $\spec(Per(f))$ is finite, say $\spec(Per(f))=\{q_1,\ldots,q_u\}$ and assume that $q_1=p$. Let $m$ be a period of $f$ associated to the position 1 such that $\spec(m)=\spec(Per(f))$ and denote $b_j:=\nu_{q_j}(Per(f))$. Note that $b_j<\infty$ for $j=2,\ldots,u$.
We claim that $f_2$ is $T$-periodic, where $T:=p^{\nu_p(m)}q_2^{b_2}\ldots q_u^{b_u}$. Clearly, as $p|T$, $p^{\perp}+T=p^{\perp}$, so $f_2(n+T)=0=f_2(n)$ for $n$ divisible by $p$. Assume now that $n\in p^{\perp}$ and write $n=q_2^{a_2}\ldots q_u^{a_u}n'$, where $n'\in (q_1q_2\ldots q_u)^{\perp}$. This means that $n'\in\spec(Per(f))^{\perp}= m^{\perp}$, so $n'\in Per_m(f)$ by Lemma~\ref{eq:periodof1}. By Lemma~\ref{lem:zamiast1.9}
$n\in Per_{M}(f)$, where $M=q_2^{a_2}\ldots q_u^{a_u}m$. Finally, by Lemma~\ref{lem:general} (applied first to $q_2,b_2,n,M$, then to $q_3,b_3,n,q_2^{b_2-\nu_{q_2}(M)}M$, etc.), we conclude that
$n\in Per_{q_2^{b_2-\nu_{q_2}(M)}\ldots q_u^{b_u-\nu_{q_u}(M)}M}(f)$.
However, this yields
$n\in Per_{T}(f)$ because
$$
T=q_2^{b_2-\nu_{q_2}(M)}\ldots q_u^{b_u-\nu_{q_u}(M)}M,
$$
as $\spec(m)=\spec(M)=\{q_1,\ldots,q_u\}$ and $\nu_p(m)=\nu_p(M)=\nu_p(T)$. Moreover, $f(n)=f_2(n)$ and $f(n+T)=f_2(n+T)$ since $n,n+T\in p^{\perp}$. Therefore, $T$ is a period of $f_2$ and the proof is complete.

\section{When a pretentious function $f$ has precisely one Furstenberg system?}
The problem in the title of this section is equivalent to the fact that $f$ is generic or that all correlation limits of $f$ exist.
Prior to answering this question in Theorem \ref{t:OK}, we discuss a few relevant examples.
\subsection{Motivating examples}\label{s:nit} In the examples that we considered so far we have encountered precisely one Furstenberg system (in fact, in most cases, the sequences generated  uniquely ergodic subshifts). However, in general, we have uncountably many Furstenberg systems.
We just recall that each of the non-trivial Archimedean characters $n^{it}$, $t\neq0$, has uncountably many different Furstenberg systems because such functions do not have the mean, as a consequence of theorem of Hal\'asz .\footnote{All of the Furstenberg systems, as measure-theoretic systems, are isomorphic to the identity on the circle (considered with Lebesgue measure) \cite{Go-Le-Ru}.} Note however that a small perturbation of an Archimedean character:
$$
f(n)=n^{it}\text{ for }n\text{ odd}, f(2^k)=-2^{ikt}\text{ for }k\geq1,$$
(so $f(n)=g(n)n^{it}$ with $g(n)=(-1)^{n+1}$) yields a multiplicative function such that $\mathbb{D}(n^{it},f)<+\infty$ and $M(f)=0$ by Hal\'asz theorem. Nevertheless, a short argument shows that the second order correlation
$$
\lim_{N\to\infty}\frac1N\sum_{n<N}f(n)f(n+1)\text{ does not exist}.
$$
Indeed,
$$
|n^{it}(n+1)^{it}-n^{2it}|=|n^{2it}(e^{it\log(1+\frac1n)}-1)|=
|e^{it(\frac1n+O(1/n^2))}-1|=o(1),
$$
so,
$$
\lim_{N\to\infty}\frac1N\sum_{n<N}f(n)f(n+1)=
\lim_{N\to\infty}\frac1N\sum_{n<N}n^{2it}g(n)g(n+1)=$$$$
-\lim_{N\to\infty}\frac1N\sum_{n<N}n^{2it}$$
since $g(n)g(n+1)=-1$ for all $n\geq1$. Since the latter limit does not exist, our claim follows.

It is also possible for $f\sim \chi$ to have uncountably many Furstenberg systems, as this condition does not guarantee that $M(f)$ exists when $\chi$ coincides with the trivial character. Indeed, for the completely multiplicative $f$ defined by $f(p)=e^{2\pi i/\log\log p}$, where $f\sim 1$ and the mean does not exist (see \cite{Fr-Le-Ru}).

\subsection{Characterization of multiplicative RAP}
Let us recall that a sequence $f\in\mathbb{U}^{\Z}$ is called {\em Besicovitch
almost periodic} if for each $\vep>0$ there exists a trigonometric polynomial $P$ such that
$$
\|f-P\|_B:=\limsup_{N\to\infty}\frac1N\sum_{n<N}|f(n)-P(n)|<\vep.$$
By the {\em spectrum} of such a function it is meant the set of all non-zero values given by the limits (which do exist) $\lim_{N\to\infty}\frac1N\sum_{n<N}f(n)e^{-2\pi in\theta}$ for $\theta\in\T$ (if $f$ is additionally multiplicative then for all $\theta$ irrational the limit equals 0, see Theorem~1 \cite{Da-De}). By Theorem 2.7 \cite{Be-Ku-Le-Ri}, for all such $f$, the spectrum is empty iff $\|f\|_B=0$.\footnote{\label{f:btrivial} Assume additionally that $f\in\mathcal{M}$. Then, $\|f\|_B=0$ implies obviously that $f$ is aperiodic, whence, it is not pretentious. Moreover, $\|f\|_B=0$ iff $\mathbb{D}(|f|,1)=\infty$, Lemma 2.9 \cite{Be-Ku-Le-Ri}. It follows that pretentious multiplicative functions which are Besicovitch almost periodic have non-empty spectrum.}
When the approximates $P$ of a function $f$ have rational coefficients, we go back to the notion of RAP (cf.\ Footnote~\ref{f:rap}).
It has been proved in \cite{Be-Ku-Le-Ri} (see Corollary 2.11 therein) that for $f\in\mathcal{M}$ the notions of Besicovitch almost periodicity and RAP are equivalent.
Given a subsequence $(N_k)$, we say that $f$ is RAP along $(N_k)$ if,
for each $\vep>0$, there exists a periodic sequence $f_{\vep}$ such that
$$ \limsup_{k\to\infty}\frac1{N_k}\sum_{n<N_k}|f(n)-f_{\vep}(n)|<\vep.$$
If $f\in\mathcal{M}$ and  $f\sim \chi$ then it has been proved in \cite{Fr-Le-Ru} that in this case for each subsequence $(N_k)$ we can choose a further subsequence $(N'_k)$ along which $f$ is RAP.

Before we begin the proof of Theorem~\ref{t:OK}, we make a remark on complex-valued measures on the circle.

\begin{Remark}\label{r:spectral} Assume that $\sigma$ is a positive (finite), Borel measure on $\T$ and let $\sigma'\ll \sigma$ be a complex measure. If $\sigma$ is discrete then $(\widehat{\sigma'}(n))$ cannot be eventually zero (i.e.\ we cannot have $\widehat{\sigma'}(n)=0$ for $|n|\geq n_0$). Indeed, suppose that $\widehat{\sigma'}(n)=0$ for $|n|\geq n_0$. Then, $\sigma'=P(z)dz$, where $P(z)=\sum_{|k|<n_0}\widehat{\sigma'}(k)z^k$   for $|n|\geq n_0$. On the other hand, $\sigma'=\nu_1-\nu_2+i\nu_3-i\nu_4$, where all $\nu_j$ are positive (finite) measures with disjoint supports. Since  $\nu_j\ll\sigma$, they are purely atomic and  we obtain an obvious contradiction.\end{Remark}

{\em Proof of Theorem~\ref{t:OK}}.
(iii) $\Leftrightarrow$ (ii)
 This follows by combining Corollary 2.11 \cite{Be-Ku-Le-Ri} (which says that RAP is equivalent to Besicovitch almost periodicity for $f\in\mathcal{M}$) and Theorem~6 \cite{Da-De} (which says that $f\in\mathcal{M}$ is Besicovitch almost periodic with non-empty spectrum iff condition~\eqref{mlok} is satisfied), see also Theorem 2.6 in  \cite{Fr-Le-Ru}.

(iii) $\Rightarrow$ (i) The claim follows immediately since RAP is generic (see Theorem 1.7 \cite{Be-Ku-Le-Ri0}).

(i) $\Rightarrow$  (ii)
First of all, notice that (i) for $f$ implies (i) for any power $f^m$ (as (i) for $f$ is equivalent to the existence of all correlations involving {\bf all} powers $f^{a_i}$, $\ov{f}^{b_j}$).  We now claim that (i) for $f$ implies that $t=0$. Indeed, suppose $t\neq0$ and let $q$ be the modulus of $\chi$. We have (using the inequality $\mathbb{D}(f^m,g^m)\leq m\mathbb{D}(f,g)$)
$$
f^{\phi(q)}\sim n^{i\phi(q)t}.$$
Since the mean of $f^{\phi(q)}$ must exist (as $f^{\phi(q)}$ satisfies (i)),
\beq\label{ha1}
f^{\phi(q)}(2^k)=-2^{ki\phi(q)t}\text{ for all }k\geq1\eeq
in view of the Hal\'asz theorem.
But we can repeat the same reasoning for $2\phi(q)$ instead of $\phi(q)$ to obtain
$$
f^{2\phi(q)}(2^k)=-2^{ki2\phi(q)t}\text{ for all }k\geq1
$$ which is in contradiction with \eqref{ha1}.

Hence, $t=0$, so $\mathbb{D}(f,\chi)<+\infty$ and let $q$ denote the modulus of $\chi$. By our standing assumption~(i), we know that all correlations exist. In particular, all correlations of order~2 do and we claim that there exists $a\in\N$, $a>q$, such that
\beq\label{OK1}
\lim_{N\to\infty}\frac1N\sum_{n<N}f(n)f(n+a)\neq0.\eeq
Indeed, let $(X_f,\nu,S)$ denote the unique Furstenberg system of $f$. Let $\pi_0:X_f\to\C$ stand for the 0-coordinate projection.  Then, for each $k\in\Z$,
$$
\lim_{N\to\infty}\frac1N\sum_{n<N}f(n)f(n+k)=
\widehat{\sigma}_{\pi_0,\ov{\pi}_0}(k),$$
where $\sigma_{\pi_0,\ov{\pi}_0}$ is a complex (spectral) measure determined by $$\widehat{\sigma}_{\pi_0,\ov{\pi}_0}(k)=\int_{X_f}\pi_0\cdot\pi_0\circ S^k\,d\nu=\int_{\T}z^k\,d\sigma_{\pi_0,\ov{\pi}_0},\;k\in\Z.$$
By the general spectral theory of unitary operators on separable Hilbert spaces, $\sigma_{\pi_0,\ov{\pi}_0}\ll\sigma_{\pi_0}$. Moreover, by the general theory of Furstenberg systems of pretentious functions \cite{Fr-Le-Ru}, we know that $\sigma_{\pi_0}$ is purely atomic. Our claim now follows from Remark~\ref{r:spectral}.

By the correlation formulas in Theorem~1.5 in~\cite{Kl} (with $\ov{F}$ replaced by $F$; the same proof works as noted in \cite{Kl}), we have
$$
\sum_{n<N}f(n)f(n+a)=N\prod_{p\leq N} \mathscr{M}_p+o(N).$$
We define another multiplicative function $F$ by setting $F(p^k):=f(p^k)\ov{\chi(p^k)}$ if $p$ does not divide the modulus $q$ and $F(p^k)=1$ if $p|q$.
Now, since the infinite product converges (see~\eqref{OK1}), all the local factors $\mathscr{M}_p$ are different from zero. Furthermore, the number of local factors $\mathscr{M}_p$ such that
$$
p^\ell||q\text{ or }(p^0||q\wedge p|a)$$
is finite. For all remaining local factors, whence, for all large $p\gg a$,  the local factors ($n=0$ in the notation of Thm.\ 1.5 in \cite{Kl}, $p^0||a$) are:
$$
\mathscr{M}_p=1-\frac2{p^{0+1}}+\Big(1-\frac1p\Big)
\cdot 2\sum_{j>0}\frac{F(p^0)F(p^j)}{p^j}=$$$$
1-\frac2p+2\Big(1-\frac1p\Big)\Big(\frac{F(p)}{p}+\frac{F(p^2)}{p^2}+\ldots\Big)=$$$$
1-\frac2p+\frac{2F(p)}p+O\Big(\frac1{p^2}\Big)=1-2\frac{1-F(p)}{p}+
O\Big(\frac1{p^2}\Big).$$
Now, the infinite product $\prod_{p\gg a}\mathscr{M}_p$ must converge, so by taking the logarithm, the series $\sum_p\frac1p(1-f(p)\ov{\chi(p)})$ converges which finishes the proof of the claim.\bez


\begin{Cor}\label{c:OK1} There is no pretentious function  having a unique {\bf non-ergodic} Furstenberg system.\end{Cor}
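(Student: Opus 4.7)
The plan is to derive the corollary as an immediate consequence of Theorem~\ref{t:OK}. Suppose for contradiction that a pretentious $f\in\mathcal{M}$ admits a unique Furstenberg system. Applying the implication (i)~$\Rightarrow$~(ii) of Theorem~\ref{t:OK}, the condition $\mathbb{D}(f,\chi\cdot n^{it})<+\infty$ with $\chi$ primitive forces $t=0$, so in fact $f\sim\chi$. By the classification of Furstenberg systems of pretentious functions recalled in the Introduction (from \cite{Fr-Le-Ru}), for $f\sim\chi$ every Furstenberg system is an \emph{ergodic} odometer; in particular, the unique Furstenberg system of $f$ is ergodic, contradicting the hypothesized non-ergodicity.

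Equivalently, one can simply invoke the ``Moreover'' clause of Theorem~\ref{t:OK}, which already asserts that the unique Furstenberg system of a generic pretentious function must be an ergodic odometer. The conceptual content of the corollary is thus entirely captured by Theorem~\ref{t:OK}: the only mechanism for producing non-ergodic Furstenberg systems from a pretentious $f\sim\chi\cdot n^{it}$ is the Archimedean tilt $t\neq 0$ (which yields non-ergodic systems whose ergodic components are odometers), and genericity forces $t=0$, ruling out this mechanism. There is therefore no substantive obstacle beyond what was already handled in the proof of Theorem~\ref{t:OK}; the proof should be a one-line deduction.
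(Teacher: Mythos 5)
Your proposal is correct and matches the paper's argument: Corollary~\ref{c:OK1} is indeed deduced directly from the ``Moreover'' clause of Theorem~\ref{t:OK}, which says the unique Furstenberg system of a generic pretentious function is an ergodic odometer. The extra detour through (i)~$\Rightarrow$~(ii) and the \cite{Fr-Le-Ru} classification is harmless but unnecessary, since the paper states the one-line deduction exactly as in your second paragraph.
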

\begin{proof} This follows from Theorem~\ref{t:OK} since the unique system is an ergodic odometer.\end{proof}

\begin{Cor} If $f\sim \chi\cdot n^{it}$, $t\neq 0$, then $f$ has uncountably many
different Furstenberg systems. All of them are isomorphic and each of them has uncountably many ergodic components.\end{Cor}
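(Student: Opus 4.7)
My plan is to combine Theorem~\ref{t:OK} with the structural dichotomy for Furstenberg systems of pretentious functions established in \cite{Fr-Le-Ru}.

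First, I would reduce to the primitive case by replacing $\chi$ with the primitive Dirichlet character it is induced from; the relation $f \sim \chi\cdot n^{it}$ is preserved, so the primitivity hypothesis of Theorem~\ref{t:OK} is met. Condition~(ii) of that theorem forces $t = 0$, so under our assumption $t\neq 0$ it fails, and the equivalence with~(i) shows that $f$ is not generic. Hence $f$ admits at least two distinct Furstenberg systems.

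Next, to upgrade ``at least two'' to ``uncountably many'' I would invoke the dichotomy from \cite{Fr-Le-Ru} that for any pretentious $f\in\mathcal{M}$ the set of Furstenberg systems is either a singleton or uncountable. A quick way to see this dichotomy is to observe that the set of weak-$\ast$ accumulation points of the empirical measures $\mu_N:=\frac{1}{N}\sum_{n<N}\delta_{S^nf}$ is a connected compact subset of the space of $S$-invariant probability measures on $X_f$: connectedness follows from $\|\mu_{N+1}-\mu_N\|_{\mathrm{TV}} = O(1/N)$, and a compact connected metric space with more than one point is uncountable. Combined with the preceding paragraph, this produces uncountably many Furstenberg systems.

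The remaining assertions---that all these systems are pairwise isomorphic and that each is non-ergodic with uncountably many ergodic components, each isomorphic to an ergodic odometer---are the content of the classification of Furstenberg systems of pretentious functions with a non-trivial Archimedean twist $n^{it}$, $t\neq 0$, carried out in \cite{Fr-Le-Ru}, which I would simply cite. The only real technical obstacle behind this corollary thus sits inside that classification; given it, together with Theorem~\ref{t:OK} and the connectedness argument above, everything else is formal bookkeeping.
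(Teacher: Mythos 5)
Your proof is correct and its logical skeleton matches the paper's: show non-genericity via Theorem~\ref{t:OK}, invoke the dichotomy ``one or uncountably many'' Furstenberg systems, and then cite \cite{Fr-Le-Ru} for the isomorphism and ergodic-decomposition structure when $t\neq 0$. The only real divergence is cosmetic on one side and a genuine addition on the other. Cosmetically, you apply Theorem~\ref{t:OK} directly, whereas the paper routes through Corollary~\ref{c:OK1} (there is no pretentious $f$ with a unique non-ergodic Furstenberg system) combined with the fact from \cite{Fr-Le-Ru} that no Furstenberg system is ergodic when $t\neq 0$; since Corollary~\ref{c:OK1} is itself an immediate consequence of Theorem~\ref{t:OK}, these are the same argument packaged differently. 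The genuine addition is your self-contained proof of the singleton-or-uncountable dichotomy: the set $V(f)$ of weak-$\ast$ limit points of $\mu_N=\frac1N\sum_{n<N}\delta_{S^nf}$ is a compact, connected subset of the space of $S$-invariant probability measures because $\|\mu_{N+1}-\mu_N\|_{\mathrm{TV}}=O(1/N)$, and a connected compact metric space with at least two points has cardinality at least continuum. The paper simply cites this from \cite{Fr-Le-Ru}; your argument is more elementary and works for arbitrary bounded sequences, not just pretentious multiplicative ones, which is a small but worthwhile improvement in transparency. One minor point you glossed over: Theorem~\ref{t:OK} is stated for a fixed pair $(\chi,t)$, so to conclude non-genericity you are implicitly using the uniqueness of $t$ in the pretentiousness relation (a Dirichlet character cannot pretend to be a nontrivial Archimedean character, cf.\ \eqref{pret2}); it is worth saying this explicitly, but it does not affect correctness.
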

\begin{proof} This follows from Corollary~\ref{c:OK1} and \cite{Fr-Le-Ru}, where it has been proved that whenever $t\neq 0$ then no Furstenberg is ergodic (and that for pretentious functions all Furstenberg systems are isomorphic).\end{proof}

\begin{Cor}\label{c:FV0}
If $f$ takes only finitely many values then condition \eqref{mlok} is equivalent to
\beq\label{mlok1} \sum_{p\colon f(p)\neq \chi(p)}\frac1p<+\infty.\eeq
\end{Cor}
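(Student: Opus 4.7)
The plan is to exploit the fact that, since $f$ takes only finitely many values, the product $f(p)\overline{\chi(p)}$ ranges over a finite set $T\subset\mathbb{U}$. Denote by $q$ the modulus of $\chi$. Since there are only finitely many primes $p\mid q$, they contribute $O(1)$ to either series, so I freely restrict attention to $p\nmid q$; for such $p$ one has $|\chi(p)|=1$, hence $f(p)\overline{\chi(p)}=1$ if and only if $f(p)=\chi(p)$.

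For the implication \eqref{mlok1} $\Rightarrow$ \eqref{mlok}, I would simply set $C:=\max\{|1-s|\colon s\in T\}<+\infty$. The terms in \eqref{mlok} with $f(p)=\chi(p)$ vanish, while the remaining ones are bounded in absolute value by $C/p$, so \eqref{mlok1} yields \emph{absolute} (hence ordinary) convergence of the series in \eqref{mlok}.

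For the reverse direction, I would first note that \eqref{mlok} implies in particular the convergence of its real part, i.e.\ $\mathbb{D}(f,\chi)^2=\sum_p \tfrac{1}{p}\bigl(1-\mathrm{Re}\,f(p)\overline{\chi(p)}\bigr)<+\infty$. Let $T':=\{s\in T\colon s\neq 1\}$, a finite subset of $\mathbb{U}\setminus\{1\}$. Every $s\in T'$ satisfies $|s|\le 1$ and $s\neq 1$, so $\mathrm{Re}(s)<1$, and by finiteness $c:=\min_{s\in T'}(1-\mathrm{Re}\,s)>0$. Consequently
\[
\mathbb{D}(f,\chi)^2\ \ge\ c\sum_{\substack{p\,\nmid\,q\\ f(p)\neq\chi(p)}}\frac{1}{p},
\]
and adding the finite contribution from $p\mid q$ delivers \eqref{mlok1}.

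I do not anticipate any genuine obstacle: the whole argument reduces to two elementary compactness facts---a finite set in $\mathbb{U}$ has bounded diameter, and a finite subset of $\mathbb{U}\setminus\{1\}$ is uniformly separated from $1$ in real part---and the finiteness of the value set of $f$ is doing all the work. The only mild subtlety worth flagging is that \eqref{mlok} asserts convergence of a complex-valued series, not absolute convergence, but on the side where this matters (the $(\Rightarrow)$ direction) I only use the convergence of its real part, which is automatic.
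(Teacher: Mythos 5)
Your proof is correct and matches what the paper does implicitly (the paper states Corollary~\ref{c:FV0} without a displayed proof, but the same two observations appear in the proof of Corollary~\ref{c:FV} and of Corollary~\ref{c:male1}): the finite value set gives a uniform positive lower bound on $1-\mathrm{Re}\bigl(f(p)\ov{\chi(p)}\bigr)$ when $f(p)\neq\chi(p)$, yielding \eqref{mlok}$\Rightarrow$\eqref{mlok1} from the real part alone, while boundedness of the terms gives absolute convergence for the converse. Your remark that only the real part of \eqref{mlok} is needed, and that \eqref{mlok1} in fact yields absolute convergence, is exactly the right way to handle the complex-vs-real subtlety.
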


\subsubsection{The FH-conjecture}
Let us now discuss Conjecture~1 from \cite{Fr-Ho}: {\em Every real-valued $f\in\mathcal{M}$ has a unique Furstenberg system (i.e.\ $f$ is a generic point)}.

\begin{Cor} \label{c:FHc1}Frantzikinakis-Host's conjecture (FH-conjecture) is true in the class of pretentious functions.\end{Cor}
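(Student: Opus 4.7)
The plan is to reduce the statement to Theorem~\ref{t:OK} by showing that for a real-valued pretentious $f \in \mathcal{M}$ condition~(ii) of that theorem is automatically satisfied. So I would fix such an $f$ with $\mathbb{D}(f, \chi \cdot n^{it}) < +\infty$ for some primitive Dirichlet character $\chi$ of modulus $q$ and some $t \in \mathbb{R}$, and aim to deduce both $t = 0$ and the convergence of the series in~\eqref{mlok}.

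First, to show $t = 0$, I would use that $f = \overline{f}$ implies $\mathbb{D}(f, \chi \cdot n^{it}) = \mathbb{D}(\overline{f}, \overline{\chi} \cdot n^{-it}) = \mathbb{D}(f, \overline{\chi} \cdot n^{-it})$, and apply the triangle inequality for $\mathbb{D}$ to obtain $\mathbb{D}(\chi \cdot n^{it}, \overline{\chi} \cdot n^{-it}) < +\infty$. Unpacking the definition, this is (up to finitely many terms at primes dividing $q$) the convergence of $\sum_{p} \frac{1}{p}\bigl(1 - \mathrm{Re}(\chi^2(p)\,p^{2it})\bigr)$. Restricting to primes $p \equiv 1 \pmod{q}$, on which $\chi(p) = 1$ and hence $\chi^2(p) = 1$, one obtains $\sum_{p \equiv 1 \pmod{q}} \frac{1}{p}(1 - \cos(2t \log p)) < +\infty$. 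For $t \neq 0$ this contradicts the PNT-in-arithmetic-progressions estimate displayed in the excerpt in the proof of~\eqref{pret2}, so $t = 0$.

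With $t = 0$ in hand, the same reality argument gives $\mathbb{D}(f, \overline{\chi}) = \mathbb{D}(f, \chi) < +\infty$, and so by the triangle inequality $\mathbb{D}(\chi^2, 1) < +\infty$; since a non-principal Dirichlet character cannot pretend to $1$ (as is standard from the classical theory of $L(1,\psi) \neq 0$ for $\psi$ non-principal, used throughout the paper), $\chi^2$ is principal and hence $\chi$ takes values in $\{-1, 0, 1\}$. Now condition~\eqref{mlok} comes for free: for every prime $p$ one has $f(p)\overline{\chi(p)} = f(p)\chi(p) \in [-1, 1]$, so $1 - f(p)\overline{\chi(p)} \geq 0$, and the series in~\eqref{mlok} agrees term-by-term with $\mathbb{D}(f, \chi)^2 < +\infty$. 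Theorem~\ref{t:OK} then yields that $f$ has a unique (ergodic odometer) Furstenberg system. The only genuinely non-routine step is the elimination of $t \neq 0$; everything after it is a short manipulation with the pretentious distance, so I expect the main obstacle to be exactly this initial PNT-based reduction.
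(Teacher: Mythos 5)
Your proof is correct and follows essentially the same route as the paper: use the reality of $f$ together with the triangle inequality for $\mathbb{D}$ to force $t=0$, then deduce that $\chi$ is a real character, note that condition~\eqref{mlok} then coincides with $\mathbb{D}(f,\chi)^2<\infty$, and invoke Theorem~\ref{t:OK}. The only cosmetic difference is in the elimination of $t\neq 0$: you kill the character by restricting to primes $p\equiv 1\pmod q$ and then appeal to the PNT-in-progressions estimate from the proof of~\eqref{pret2}, whereas the paper raises $f$ to the power $\phi(q)$ so as to reduce to the classical fact $\mathbb{D}(1,n^{2is})=+\infty$ for $s\neq 0$; both are valid implementations of the same reflection idea.
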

\begin{proof}
Let $f:\N\to[-1,1]$ be multiplicative. Assume that $\mathbb{D}(f,\chi\cdot n^{it})<+\infty$ (with $\chi$ of modulus~$q$). Then $t=0$ since $\mathbb{D}(f^{\phi(q)},n^{i\phi(q)t})<+\infty$.\footnote{Note that if $g\in\mathcal{M}$ is real-valued then
since ${\rm Re}(p^{it})={\rm Re}(p^{-it})$ and $g(p)\in\R$, we have
$$
\mathbb{D}(1,n^{2it})=\mathbb{D}(n^{-it},n^{it})\leq 2\mathbb{D}(g,n^{it})$$
(using the triangle inequality for $\mathbb{D}$ to get the latter inequality),
so if $t\neq0$ then $\mathbb{D}(g,n^{it})=+\infty$; apply this to $g=f^{\phi(q)}$.}
Hence $\mathbb{D}(f,\chi)<+\infty$ and since $f$ is real-valued, also $\chi$ has to be a real character (i.e.\ taking only $0,\pm1$-values).\footnote{If $0\neq a\neq\pm1$ and there is $p$ so that $f(p)=a$ then the set $\{p'\colon f(p')=a\}$ contains a set $r+q\N$ with ${\rm gcd}(r,q)=1$, so (in view of Dirichlet's theorem)
$$\sum_{p'\colon f(p')=a}\frac1{p'}(1-f(p'){\rm Re}(a))=+\infty$$
since $|{\rm Re}(a)|<1$.}
It follows that
$$
\sum_{p}\frac1p(1-f(p)\chi(p))=\sum_p \frac1p(1-{\rm Re}(f(p)\chi(p)))<+\infty.$$
The result now follows from Theorem~\ref{t:OK}.\end{proof}

\begin{proof} {\em of Corollary~\ref{c:FH10}.}
Let $f\in\mathcal{M}$, $f:\N\to[-1,1]$. We need to show the existence of limits for all possible correlations:
\beq\label{cor111}
\lim_{N\to\infty}\frac1N\sum_{n<N}\prod_{j=1}^kf(n+b_j)^{a_j}\eeq
for all $k\geq1$, all possible integers $b_1<\ldots<b_k$ and all $a_1,\ldots,a_k\in\N$.

{\bf Case 1.} If $f$ is pretentious then the existence of all limits~\eqref{cor111} follows from Corollary~\ref{c:FHc1}.

{\bf Case 2.} If $f$ is aperiodic and all numbers $a_1,\ldots,a_k$ are even, then remembering that $f^2$ is pretentious (in fact, it is RAP, see Remark 2.12 in \cite{Be-Ku-Le-Ri}), the result follows.

{\bf Case 3.} Assume that $f$ is aperiodic and, for some $1\leq j_0\leq k$, we have $a_{j_0}$ is odd. Then, we claim that $f^{a_{j_0}}$ remains aperiodic. Indeed, $f^{a_{j_0}}=fg$, where $g=f^{a_{j_0}-1}$, so $g\geq0$ and therefore (see Case 2.) $f$ is the product of an aperiodic and a RAP multiplicative function. All such products remain aperiodic by Proposition 2.23 \cite{Be-Ku-Le-Ri}.\footnote{The result follows from a Frantzikinakis-Host's theorem \cite{Fr-Ho0} on the equivalence of notions of aperiodicity and uniformity (in the class $\mathcal{M}$) and some basic properties of GHK semi-norms.}   Finally, by Appendix~C in \cite{Ma-Ra-Ta}, $f^{a_{j_0}}$ is strongly aperiodic. A use of the cE-conjecture shows that the limit in \eqref{cor111} exists (it equals~0).
\end{proof}


Finally, we will show that Theorem~\ref{t:OK} applies when $f$ takes finitely many values.

\begin{Lemma}\label{l:FV} Let $f\in\mathcal{M}$ takes finitely many values and let $f\sim n^{it}$. Then $t=0$.\end{Lemma}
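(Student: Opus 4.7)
The plan is to derive a contradiction from $t\ne 0$ by pitting the pretentious condition against equidistribution of $(t\log p)\bmod 2\pi$ on primes. First, I would use $1-{\rm Re}(f(p)p^{-it})\ge 1-|f(p)|$ to deduce from $\mathbb{D}(f,n^{it})<\infty$ that $\sum_p(1-|f(p)|)/p<\infty$. Since $|f(p)|$ ranges over the finite set $\{|v|:v\in V\}$, where $V$ is the value set of $f$, this forces $1\in\{|v|:v\in V\}$: otherwise $\max\{|v|:v\in V\}<1$ would give $1-|f(p)|\ge\delta>0$ on $\{f(p)\ne 0\}$, forcing $\sum_{f(p)\ne 0}1/p<\infty$ and hence $\sum_{f(p)=0}1/p=\infty$, which contradicts $\mathbb{D}(f,n^{it})<\infty$ (since $1-{\rm Re}(0)=1$). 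Setting $\PP_1:=\{p:|f(p)|=1\}$, the same reasoning applied to the finitely many values of $|f(p)|<1$ gives $\sum_{p\notin \PP_1}1/p<\infty$, so $\sum_{p\in\PP_1,\,p\le X}1/p=\log\log X+O(1)$.

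Next, I would write $V\cap\T=\{e^{i\theta}:\theta\in\Theta\}$ with $\Theta$ finite, set $E_\theta:=\{p:f(p)=e^{i\theta}\}$ so that $\PP_1=\bigsqcup_{\theta\in\Theta}E_\theta$, and specialize the pretentious series to
\[
\sum_{\theta\in\Theta}\sum_{p\in E_\theta}\frac{1}{p}\bigl(1-\cos(\theta-t\log p)\bigr)<\infty.
\]
For each $\vep\in(0,2)$ and each $\theta\in\Theta$, I would partition $E_\theta=E_\theta^{\rm good}\sqcup E_\theta^{\rm bad}$ according to whether $1-\cos(\theta-t\log p)<\vep$; the bad part trivially satisfies $\sum_{E_\theta^{\rm bad}}1/p\le\vep^{-1}\cdot(\text{finite})<\infty$, while the good part is contained in $\{p:t\log p\in(\theta-\alpha(\vep),\theta+\alpha(\vep))\bmod 2\pi\}$ with $\alpha(\vep):=\arccos(1-\vep)\to 0$ as $\vep\to 0$.

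Now assuming $t\ne 0$ for contradiction, I would invoke equidistribution of $(t\log p)\bmod 2\pi$: since $\zeta(s)$ has no pole at $s=1-ikt$ for any integer $k\ne 0$, partial summation from the prime number theorem gives $\sum_{p\le X}p^{-ikt}/p=O_{k,t}(1)$, and approximating $\mathbf{1}_I$ by trigonometric polynomials (Erd\H{o}s--Tur\'an) upgrades this to
\[
\sum_{\substack{p\le X\\ t\log p\in I\bmod 2\pi}}\frac{1}{p}=\frac{|I|}{2\pi}\log\log X+o(\log\log X)
\]
for every arc $I\subset[0,2\pi)$. Applied to $I=(\theta-\alpha(\vep),\theta+\alpha(\vep))$, this forces $\limsup_{X\to\infty}\frac{1}{\log\log X}\sum_{p\in E_\theta,\,p\le X}\frac{1}{p}\le\alpha(\vep)/\pi$, and letting $\vep\to 0$ makes this $\limsup$ equal $0$. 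Summing over the finite set $\Theta$ then yields
\[
1+o(1)=\frac{1}{\log\log X}\sum_{p\in\PP_1,\,p\le X}\frac{1}{p}=\sum_{\theta\in\Theta}\frac{1}{\log\log X}\sum_{p\in E_\theta,\,p\le X}\frac{1}{p}\longrightarrow 0,
\]
a contradiction, so $t=0$. The main obstacle I anticipate is a clean extraction of the $\log\log X$-precision equidistribution from the convergence of $\sum_p p^{-1-ikt}$; all other steps are routine bookkeeping on divergent series.
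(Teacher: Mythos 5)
Your proof is correct but takes a genuinely different route from the paper's. The paper observes that any value $a_j$ taken at infinitely many primes must be a root of unity (since $a_j^L\in V$ for all $L$), fixes $m$ with $f(p)^m=1$ outside a finite set of primes, and then uses the subadditivity $\mathbb{D}(f^m,n^{imt})\le m\,\mathbb{D}(f,n^{it})$ to reduce to the textbook fact $\mathbb{D}(1,n^{imt})=+\infty$ for $mt\neq 0$. You instead first isolate $\PP_1=\{p:|f(p)|=1\}$ (of full $\tfrac1p$-density), partition it into the finitely many level sets $E_\theta$, and show via the $\tfrac1p$-weighted equidistribution of $(t\log p)\bmod 2\pi$ (from $\sum_{p\le X}p^{-1-ikt}=O_{k,t}(1)$ plus Erd\H{o}s--Tur\'an) that no arc of positive $\tfrac1p$-density can be concentrated near a single angle, contradicting $\sum_{p\in\PP_1,p\le X}\tfrac1p\sim\log\log X$. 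Your argument avoids the root-of-unity structure entirely and is in that sense more robust, but the price is that you must prove the equidistribution estimate to $o(\log\log X)$ precision, which is the part you flag as the main obstacle; the paper's reduction to $\mathbb{D}(1,n^{imt})=+\infty$ sidesteps this and is shorter. Note also that your equidistribution argument is essentially the same device the paper uses earlier (the computation following \eqref{pret2} in the Introduction) and later in the proof of Proposition~\ref{p:kmt7}, so it is well in the spirit of the paper even if not the route chosen for this particular lemma.
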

\begin{proof} By assumption, $\{f(p)\colon p\in\PP\}=\{a_1,\ldots,a_k\}$ and let $P_j:=f^{-1}(a_j)$, $j=1,\ldots,k$. By renaming the values if necessary we can assume that $\PP=P_1\cup \ldots\cup P_s\cup P_{s+1},\ldots, P_k$, where $P_j$ is finite for $j\leq s$ and $P_j$ is infinite if $j>s$. It follows that for $j>s$, $a_j$ must be a root of unity.\footnote{Because, if $p_{r_1},\ldots,p_{r_L}\in P_j$ then
$f(p_{r_1}\cdot\ldots\cdot p_{r_L})=a_j^L$, and $f$ takes only finitely many values.} Hence $\PP=P\cup Q$, where $P$ is finite and, for some $m\geq1$ and all $p\in Q$, $f(p)^m=1$. Now,
$$
+\infty >m^2 \mathbb{D}(f, n^{it})^2\geq \mathbb{D}(f^m,n^{imt})^2=$$$$
\sum_{p\in P}\frac1p(1-{\rm Re}(f(p)^mp^{imt}))+\sum_{p\in Q}\frac1p(1-{\rm Re}(f(p)^mp^{imt}))=$$$$
O(1)+\sum_{p\in Q}\frac1p(1-{\rm Re}(p^{imt}))= O(1)+(O(1)+\mathbb{D}(1,n^{imt})^2).$$ Thus $\mathbb{D}(1,n^{imt})<+\infty$, and therefore $t=0$.\end{proof}
It follows from Lemma~\ref{l:FV} that if $f\in\mathcal{M}$ takes finitely many values and $f\sim n^{it}\cdot\chi$ then $t=0$. Indeed, $f\cdot\ov\chi$ still takes finitely many values.

\begin{Cor}\label{c:FV} If $f\in\mathcal{M}$ is pretentious and takes finitely many values then $f$ has a unique Furstenberg system (and it is ergodic). In fact, $f$ is RAP.\end{Cor}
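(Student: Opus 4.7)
The plan is to reduce Corollary \ref{c:FV} directly to Theorem \ref{t:OK} by verifying condition (ii) of that theorem, using the finite value set crucially.

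First, since $f$ is pretentious, there exist a primitive Dirichlet character $\chi$ (of some modulus $q$) and $t\in\R$ with $\mathbb{D}(f,\chi\cdot n^{it})<+\infty$. I would begin by showing $t=0$. Since $f\cdot\overline{\chi}$ still takes only finitely many values, Lemma~\ref{l:FV} applied to $f\cdot\overline{\chi}\sim n^{it}$ forces $t=0$. Hence $f\sim\chi$.

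Next, the core step is to verify the series condition~\eqref{mlok}. By Corollary~\ref{c:FV0}, for finite-valued $f$ this is equivalent to~\eqref{mlok1}, that is, $\sum_{p\colon f(p)\neq\chi(p)}\tfrac{1}{p}<+\infty$. To prove this, observe that the set
$$
V:=\{f(p)\overline{\chi(p)}\colon p\in\mathbb{P},\ \chi(p)\neq 0\}
$$
is finite, because $f$ takes finitely many values and $\chi$ takes values in a finite set of roots of unity together with $0$. In particular, $V\setminus\{1\}$ is a finite subset of $\mathbb{U}$ not containing $1$, so there exists $c>0$ with $1-\mathrm{Re}(v)\geq c$ for every $v\in V\setminus\{1\}$. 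Excluding the finitely many primes dividing $q$, we therefore have
$$
c\cdot\sum_{\substack{p\colon f(p)\neq\chi(p)\\ p\nmid q}}\frac{1}{p}\ \leq\ \sum_p\frac{1}{p}\bigl(1-\mathrm{Re}(f(p)\overline{\chi(p)})\bigr)\ =\ \mathbb{D}(f,\chi)^2\ <\ +\infty,
$$
which gives~\eqref{mlok1}, hence~\eqref{mlok}.

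With both $t=0$ and~\eqref{mlok} verified, Theorem~\ref{t:OK} applies: it yields (i) $f$ is generic, so it has a unique Furstenberg system; (iii) $f$ is RAP; and the moreover clause of that theorem ensures the unique system is an ergodic odometer. I expect no real obstacle beyond assembling these cited results; the one point that requires a moment's care is the reduction to~\eqref{mlok1}, where one must remember to discard the finitely many primes dividing the modulus of $\chi$ (and the finitely many primes where $f$ vanishes, which are automatically controlled by $\mathbb{D}(f,\chi)<+\infty$ since they contribute a term of size at least $1$ each to the Granville--Soundararajan sum).
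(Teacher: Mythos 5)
Your proposal is correct and follows essentially the same route as the paper's proof: deduce $t=0$ from Lemma~\ref{l:FV} applied to $f\cdot\overline{\chi}$, observe that the finite value set of $f\cdot\overline{\chi}$ gives a uniform positive lower bound on $1-\mathrm{Re}(f(p)\overline{\chi(p)})$ away from $p\mid q$, conclude~\eqref{mlok1}, and invoke Corollary~\ref{c:FV0} together with Theorem~\ref{t:OK}. You simply make explicit the bound $c>0$ that the paper leaves implicit (and your closing parenthetical about ``finitely many primes where $f$ vanishes'' is an unnecessary and imprecise aside --- there could be infinitely many such primes, with merely $\sum 1/p<\infty$ --- but it does not affect the argument, since those primes are already covered by the $c$-bound).
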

\begin{proof} By the above, $f\sim\chi$, that is,
\beq\label{fv1}
\sum_p \frac1p\big(1-{\rm Re}(f(p)\ov{\chi(p)})\big)<+\infty.\eeq
However, since $f$ takes finitely many values, so does $f\cdot\chi$ and the values are in $\mathbb{U}$. Therefore, in order to have \eqref{fv1} satisfied we must have
$$\sum_{p\in\PP, f(p)\neq \chi(p)}\frac1p<+\infty.$$
The claim follows from Corollary~\ref{c:FV0} and Theorem~\ref{t:OK}.
\end{proof}

\subsection{More on RAP multiplicative functions}
We will now answer the question asked in Remark~\ref{r:ml1}.
Can we find an infinite $F\subset \mathbb{P}$, $\sum_{p\in F}\frac1p<+\infty$ satisfying $f(n)=\chi(n)$ for $n$ coprime with $F$  (note that these conditions imply $f\sim \chi$) such that the mean of $f$ {\bf does not exist}? The answer turns out to be negative (cf. Lemma 4.6 in \cite{Be-Ku-Le-Ri} and its proof).

\begin{Cor}\label{c:male1}
Assume that $f\in\mathcal{M}$ and let $F\subset \mathbb{P}$, $\sum_{p\in F}\frac1p<+\infty$  be an infinite set satisfying $f(n)=\chi(n)$ for $n$ coprime with $F$. Then $f$ is RAP and has a unique Furstenberg system. In particular, $M(f)$ exists.\end{Cor}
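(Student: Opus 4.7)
The plan is to apply Theorem~\ref{t:OK} directly, verifying its condition~(ii) with the value $t=0$. Once (ii) is established for a primitive character, both the RAP property and the uniqueness of the Furstenberg system (an ergodic odometer) follow at once, and the existence of $M(f)$ is an immediate consequence of genericity.

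The first step is a bookkeeping reduction. Let $\chi^{*}$ denote the primitive character inducing $\chi$ and let $E$ be the (finite) set of primes dividing the modulus of $\chi$. For every prime $p\notin E$ one has $\chi^{*}(p)=\chi(p)\neq 0$, and for every prime $p\notin F$ the hypothesis gives $f(p)=\chi(p)$. Therefore, for $p\notin F\cup E$, one has $f(p)\overline{\chi^{*}(p)}=\chi(p)\overline{\chi(p)}=1$, so the corresponding summand in
$$\sum_{p}\frac{1}{p}\bigl(1-f(p)\overline{\chi^{*}(p)}\bigr)$$
vanishes. The remaining sum, supported on $F\cup E$, is controlled as follows: the contribution from $E$ is a finite sum, while the contribution from $F$ is bounded in modulus by $2\sum_{p\in F}1/p<\infty$ since each summand has modulus at most $2/p$. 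Consequently the full series converges absolutely; in particular $\mathbb{D}(f,\chi^{*})<\infty$, so $f$ is pretentious with the admissible pair $(\chi^{*},0)$.

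With this pair, condition~(ii) of Theorem~\ref{t:OK} is satisfied, and therefore $f$ is RAP and generic, with its unique Furstenberg system being an ergodic odometer. Genericity of $f$ implies in particular that $M(f)=\lim_{N\to\infty}\frac{1}{N}\sum_{n<N}f(n)$ exists, answering negatively the question raised in Remark~\ref{r:ml1}. There is no genuine obstacle in the argument: the content of the corollary is that the assumption $\sum_{p\in F}\frac{1}{p}<\infty$ together with $f=\chi$ off $F$ is essentially a rewording of condition~(ii) of Theorem~\ref{t:OK}, up to the finite exceptional set of primes dividing the modulus.
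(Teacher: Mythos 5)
Your argument is correct and follows essentially the same route as the paper: one verifies condition~(ii) of Theorem~\ref{t:OK} by observing that the series $\sum_p \frac{1}{p}(1-f(p)\overline{\chi(p)})$ is supported (up to a finite set) on $F$ and hence converges absolutely since $\sum_{p\in F}1/p<\infty$. Your additional care in passing to the primitive character $\chi^*$ inducing $\chi$ and tracking the finite exceptional set $E$ of primes dividing the modulus is a harmless elaboration of what the paper absorbs into an $O(1)$ term.
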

\begin{proof}
We have
$$
\sum_{p\in\mathbb{P}}\frac1p(1-f(p)\ov{\chi(p)})=O(1)+
\sum_{p\in F}\frac1p(1-f(p)\ov{\chi(p)})$$
and the latter series is (even) absolutely convergent. The result follows from Theorem~\ref{t:OK}.\end{proof}

\section{GHK semi-norms for pretentious functions}\label{s:ghk} Recall that given $f$ and $(N_k)$  which defines a Furstenberg system of $f$, the Gowers-Host-Kra (GHK) semi-norms are defined as (see, e.g. \cite{Ho-Kr}):
$$
\|f\|^2_{u^1}=\|f\|^2_{u^1((N_k))}=
\lim_{H\to\infty}\frac1H\sum_{h\leq H}\Big(\lim_{k\to\infty}\frac1{N_k}\sum_{n\leq N_k}f(n)\ov{f(n+h)}\Big),$$
and, for $s>1$,
$$
\|f\|^{2^{s+1}}_{u^{s+1}}=\|f\|^{2^{s+1}}_{u^{s+1}((N_k))}
=\lim_{H\to\infty}\frac1H\sum_{h\leq H}\| f(\cdot+h)\cdot\ov{f(\cdot)}\|^{2^s}_{u^s((N_k))}.$$
\begin{Prop}\label{p:ghk1}
If $f\in\mathcal{M}$ is pretentious then $\|f\|_{u^2}>0$ (for some $(N_k)$ defining a Furstenberg system of $f$).\end{Prop}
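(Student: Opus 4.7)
The plan is to reinterpret the $u^2$-seminorm as a spectral quantity on a Furstenberg system and then exploit the pure point structure of the latter established in \cite{Fr-Le-Ru}.

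Pick a subsequence $(N_k)$ along which $f$ determines a Furstenberg system $(X_f,\nu,S)$ and let $\pi_0\colon X_f\to\mathbb{U}$ denote the $0$-th coordinate projection. Unwinding the definition,
\[
\|f\|^{4}_{u^2((N_k))}=\lim_{H\to\infty}\frac1H\sum_{h\le H}|c_h|^2,
\]
where $c_h=\lim_{k\to\infty}\frac1{N_k}\sum_{n\le N_k} f(n)\overline{f(n+h)}=\int_{X_f}\pi_0\cdot\overline{\pi_0\circ S^h}\,d\nu=\widehat{\sigma}_{\pi_0}(h)$ is the $h$-th Fourier coefficient of the (positive, finite) spectral measure $\sigma_{\pi_0}$ of $\pi_0$. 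By Wiener's lemma,
\[
\lim_{H\to\infty}\frac1H\sum_{h\le H}|\widehat{\sigma}_{\pi_0}(h)|^2=\sum_{\alpha\in\mathbb{T}}|\sigma_{\pi_0}(\{\alpha\})|^2,
\]
so the task is reduced to exhibiting at least one atom of $\sigma_{\pi_0}$.

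The existence of atoms follows from the structure theorem in \cite{Fr-Le-Ru}: when $t=0$, the system $(X_f,\nu,S)$ is an ergodic odometer with pure point spectrum, while for $t\ne 0$ every ergodic component is isomorphic to a fixed odometer whose eigenvalues form a common countable subgroup $\Lambda\subset\mathbb{T}$ of roots of unity. Disintegrating $\sigma_{\pi_0}=\int_\Omega\sigma_{\pi_0,\omega}\,dP(\omega)$ over the ergodic decomposition, each $\sigma_{\pi_0,\omega}$ is atomic and supported on $\Lambda$; hence $\sigma_{\pi_0}$ itself is purely atomic and supported on the countable set $\Lambda$. It remains to rule out $\sigma_{\pi_0}\equiv 0$, equivalently $\pi_0\equiv 0$ in $L^2(\nu)$. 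Here one uses that $\mathbb{D}(f,\chi\cdot n^{it})<+\infty$ implies $\sum_p(1-|f(p)|)/p<+\infty$, whence Wirsing's mean value theorem yields
\[
\int_{X_f}|\pi_0|^2\,d\nu=\lim_{k\to\infty}\frac1{N_k}\sum_{n\le N_k}|f(n)|^2>0.
\]
Combining these steps gives $\|f\|_{u^2}^4=\sum_\alpha|\sigma_{\pi_0}(\{\alpha\})|^2>0$.

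The main obstacle is the non-ergodic case $t\ne 0$, since an integral of atomic measures can in principle be continuous. The resolution is that, in \cite{Fr-Le-Ru}, \emph{all} ergodic components of $(X_f,\nu,S)$ are isomorphic to the same odometer (the Archimedean part contributes only the identity dynamics on $\mathbb{T}$), so they share a common countable set of eigenvalues and the integrated spectral measure is forced to remain supported on this countable set. Once this structural fact is in hand, the rest is an immediate application of Wiener's lemma together with the standard Wirsing estimate ensuring $L^2$-non-triviality of $\pi_0$.
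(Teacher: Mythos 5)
Your argument takes a genuinely different, more hands-on spectral route than the paper's proof. The paper argues by contradiction via abstract machinery from \cite{Ka-Ku-Le-Ru}: assuming $\|f\|_{u^2}=0$ forces $f$ to be orthogonal to all systems whose ergodic measures have discrete spectrum, and then a Hansel-model argument combined with the strong $f$-MOMO property produces a contradiction. Your approach instead reduces the question to exhibiting an atom of the spectral measure $\sigma_{\pi_0}$ and then invokes Wiener's lemma; the key structural inputs are the pure point nature of $\sigma_{\pi_0}$ from \cite{Fr-Le-Ru} and a Wirsing/Hal\'asz estimate guaranteeing $\int|\pi_0|^2\,d\nu>0$. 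This is a pleasantly direct argument that avoids the characteristic-class formalism.

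There is, however, a flaw in your opening identity that you should repair. The equality
\[
\|f\|^{4}_{u^2((N_k))}=\lim_{H\to\infty}\frac1H\sum_{h\le H}|c_h|^2
\]
holds only when the Furstenberg system $(X_f,\nu,S)$ is \emph{ergodic}, i.e.\ only in the case $t=0$. In general, unwinding the definition gives
\[
\|f\|^{4}_{u^2((N_k))}=\lim_{H\to\infty}\frac1H\sum_{h\le H}\bigl\|\mathbb{E}\bigl(\overline{\pi_0}\cdot\pi_0\circ S^h\,\big|\,\mathrm{Inv}_\nu\bigr)\bigr\|^2_{L^2(\nu)},
\]
and the quantity you wrote down is obtained by replacing the conditional expectation with the plain integral, which by Cauchy--Schwarz only gives
\[
\|f\|^{4}_{u^2((N_k))}\ \ge\ \lim_{H\to\infty}\frac1H\sum_{h\le H}|c_h|^2.
\]
You explicitly flag $t\ne 0$ as the delicate case, but the issue you address there (whether $\sigma_{\pi_0}$ can remain atomic after integrating over the ergodic decomposition) is distinct from, and does not repair, the formula. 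Fortunately the inequality is enough: once you know $\sigma_{\pi_0}$ is purely atomic with $\sigma_{\pi_0}(\T)=\int|\pi_0|^2\,d\nu>0$, Wiener's lemma gives $\lim_H\frac1H\sum_h|c_h|^2=\sum_\alpha|\sigma_{\pi_0}(\{\alpha\})|^2>0$, and so $\|f\|_{u^2}>0$ follows from the lower bound. So the proof is salvageable, but the ``='' should be a ``$\ge$'' in the non-ergodic case, and the corresponding sentence in your ``resolution'' paragraph should be rephrased accordingly.
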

\begin{proof} A dynamical proof follows from \cite{Ka-Ku-Le-Ru}, we will describe its main steps. Suppose that $\|f\|_{u^2}=0$ (for all relevant $(N_k)$).
Then, Corollary 1.10 \cite{Ka-Ku-Le-Ru} characterizes arithmetic functions having the second GHK-semi-norm equal to zero as those orthogonal to all topological systems whose all {\bf ergodic} measures give systems with discrete spectrum. Since we deal with so called ec-characteristic class, each (topological) dynamical system for which  all ergodic measures yield systems with discrete spectrum enjoys the strong $f$-MOMO property (see Proposition 2.17 in \cite{Ka-Ku-Le-Ru}). Take any Furstenberg system $(X_f,\kappa,S)$ of $f$ ($\kappa$ yields a system with discrete spectrum, but also all ergodic components of $\kappa$ yield discrete spectrum systems by \cite{Fr-Le-Ru}) and consider its any Hansel model $(Y,\kappa',S')$ (in this model, measure-theoretic dynamical systems coming from ergodic measures are isomorphic to systems given by ergodic components of $\kappa$). Then, this Hansel model satisfies strong $f$-MOMO property, which however yields
a contradiction with Proposition~5.6 \cite{Ka-Ku-Le-Ru}.
\end{proof}

It follows that all $u^s$-GHK-semi-norms of a pretentious $f$ are positive for all $s\geq2$. It remains to decide when  the first GHK-semi-norm is zero (we recall that all Archimedean characters have this seminorm positive (e.g.\ \cite{Go-Le-Ru})). Theorem A.1. in \cite{Ma-Ra-Ta} tells us that this semi-norm vanishes for all $f\in\mathcal{M}$ which do not pretend being Archimedean character. When we restrict to pretentious functions then we have a complete characterization which we prove using ergodic tools.

\begin{Prop}\label{p:ghk2} If $f\in\mathcal{M}$  is pretentious then $\|f\|_{u^1}=0$ if and only if $M(f)=0$.\end{Prop}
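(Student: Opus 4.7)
The plan is to translate both sides of the equivalence into statements about the Furstenberg systems of $f$ and then to reduce the general pretentious case to the case $f\sim\chi$ by multiplying by $n^{-it}$. Fixing a sequence $(N_k)$ defining a Furstenberg system $(X_f,\nu,S)$ of $f$, I would denote by $\pi_0\in L^2(\nu)$ the zero-coordinate projection and by $P$ the orthogonal projection of $L^2(\nu)$ onto the subspace of $U_S$-invariant functions. Von Neumann's mean ergodic theorem applied to $U_S$ yields
\[
\|f\|_{u^1((N_k))}^2=\lim_{H\to\infty}\frac1H\sum_{h\leq H}\int\pi_0\,\overline{\pi_0\circ S^h}\,d\nu=\int\pi_0\,\overline{P\pi_0}\,d\nu=\|P\pi_0\|_{L^2(\nu)}^2.
\]
Since $P\mathbf 1=\mathbf 1$, one has $\int P\pi_0\,d\nu=\int\pi_0\,d\nu=M_{(N_k)}(f)$, and Jensen's inequality then gives $|M_{(N_k)}(f)|^2\leq\|P\pi_0\|^2=\|f\|_{u^1((N_k))}^2$. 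Invoking this along every Furstenberg subsequence yields the direction $\|f\|_{u^1}=0\Rightarrow M(f)=0$.

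For the converse I would twist away the Archimedean factor: with $\mathbb D(f,\chi\cdot n^{it})<+\infty$, set $g(n):=f(n)n^{-it}\in\mathcal M$, so that $g\sim\chi$. Two preservation statements do the work. \textbf{(a)} Partial summation applied to $\sum_{n\leq N}g(n)=\sum_{n\leq N}f(n)n^{-it}$, combined with the elementary estimate $\int_1^N|F(t)|\,dt/t=o(N)$ whenever $F(t)=\sum_{n\leq t}f(n)=o(t)$, yields $M(f)=0\Leftrightarrow M(g)=0$. \textbf{(b)} Whenever $(N_k)$ defines Furstenberg systems of both $f$ and $g$ (and by weak-$*$ compactness, any sequence defining a system of $f$ admits such a subsequence), the pointwise bound $n^{it}(n+h)^{-it}=1+O(h/n)$ together with $|g|\leq 1$ forces
\[
\frac1{N_k}\sum_{n\leq N_k}f(n)\overline{f(n+h)}-\frac1{N_k}\sum_{n\leq N_k}g(n)\overline{g(n+h)}\to 0,
\]
so the autocorrelations agree, $c_f(h)=c_g(h)$ for every $h\in\mathbb Z$.

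I would then assemble the converse as follows. Assuming $M(f)=0$, (a) gives $M(g)=0$; since $g\sim\chi$, the results of \cite{Fr-Le-Ru} recalled in the introduction guarantee that every Furstenberg system of $g$ is ergodic, hence $P\pi_0^{(g)}\equiv M_{(N'_k)}(g)=0$ and $\|g\|_{u^1((N'_k))}^2=|M_{(N'_k)}(g)|^2=0$ for every $(N'_k)$ defining a Furstenberg system of $g$. For an arbitrary $(N_k)$ defining one for $f$, I would extract a subsequence $(N'_k)$ that also works for $g$; by (b), $c_f(h)=c_g(h)$ along $(N'_k)$, and because $c_f(h)$ already converges along the whole $(N_k)$ the shared value coincides with the original $c_f(h)$. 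Therefore $\|f\|_{u^1((N_k))}^2=\lim_H\frac1H\sum_{h\leq H}c_f(h)=\|g\|_{u^1((N'_k))}^2=0$, completing the converse.

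The hard part will be step (b) together with the accompanying subsequence extraction: although $n^{it}(n+h)^{-it}=1+O(h/n)$ is elementary pointwise, one must confirm that this bound genuinely transmits the vanishing of the Cesaro correlation average of $g$ back to $f$ along the \emph{prescribed} $(N_k)$, and that passing to a common subsequence does not destroy the identification of the $u^1$-limits. Once this twist is carried out, the ergodicity of Furstenberg systems of $g\sim\chi$ from \cite{Fr-Le-Ru} collapses $\|g\|_{u^1}^2$ to $|M(g)|^2$ and makes the $t=0$ case immediate.
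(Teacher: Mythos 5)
Your proof is correct but takes a genuinely different route for the sufficiency direction. Both you and the paper twist away the Archimedean factor (your $g=f\cdot n^{-it}$ is the paper's $g_2$) and both rely on the same crucial fact from \cite{Fr-Le-Ru} that every Furstenberg system of a function $\sim\chi$ is ergodic, together with $M(f)=0\Rightarrow M(g)=0$ (you via Abel summation, the paper via Hal\'asz). Where you diverge is in how you transmit the vanishing of the $u^1$-norm of $g$ back to $f$: the paper shows that along any $(N_k)$ where $g_1=n^{it}$ and $g_2$ are simultaneously quasi-generic, the pair $(g_1,g_2)$ is generic for the product measure $\kappa_1\otimes\kappa_2$ (by disjointness of the identity system and the ergodic odometer), so $f$ is generic for $M_*(\kappa_1\otimes\kappa_2)$, and then computes $\sigma_{\pi_0,M_*(\kappa_1\otimes\kappa_2)}=\delta_1*\sigma_{\pi_0,\kappa_2}=\sigma_{\pi_0,\kappa_2}$, which has no atom at $1$. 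You bypass all of this by noting that $n^{it}(n+h)^{-it}=1+O(h/n)$ forces $c_f(h)=c_g(h)$ along any common subsequence, and then identifying $\|f\|_{u^1((N_k))}^2$ with $\|P\pi_0\|^2$ by von Neumann; since ergodicity of the Furstenberg system of $g$ collapses $\|P\pi_0^{(g)}\|^2$ to $|M(g)|^2=0$, you are done. Your route is more elementary and only tracks second-order correlations, avoiding the stronger structural claim about $f$ being generic for the pushforward of a product measure, at the cost of giving less information about the Furstenberg systems of $f$ themselves; the paper's route is more expensive but also reveals the joining/disjointness structure that gets reused elsewhere in their framework. Both proofs are sound.
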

\begin{proof}The necessity is obvious, since $\|f\|_{u^1}=0$ iff $\mathbb{E}^\kappa(\pi_0|{\rm Inv}_\kappa)=0$ (Inv$_\kappa$ stands for the $\sigma$-algebra of invariant sets modulo $\kappa$) for each Furstenberg system $\kappa$ of $f$, in particular, $\int_{X_f}\pi_0\,d\kappa=0$.

To prove the sufficiency, we assume that $f\sim n^{it}\cdot \chi$. Write
$$f=g_1\cdot g_2\text{ with }g_1=n^{it},\; g_2=f\cdot n^{-it}.\footnote{Remembering that $M(f)=0$, note that\\
(i) if $\mathbb{D}(f,n^{is})=+\infty$ for all $s\in\R$, then the same holds for $g_2$ and whence $M(g_2)=0$ by Hal\'asz theorem, while\\
(ii) if $\mathbb{D}(f,n^{iu})<+\infty$ and $f(2^k)=-2^{kiu}$ (for all $k$) then $\mathbb{D}(g_2,n^{i(u-t)}))<+\infty$ and $g_2(2^{k})=-2^{ki(u-t)}$ for each $k\geq1$. It follows from Hal\'asz theorem that $M(g_2)=0$.}$$
Note that $g_2\sim \chi$, so by \cite{Fr-Le-Ru}, all Furstenberg systems of $g_2$ are ergodic and  $\int \pi_0\,d\kappa_2=0$ for each Furstenberg system $\kappa_2$ of $g_2$ (by our standing assumption $M(f)=0$ which forces $M(g_2)=0$). It follows that the spectral measure
\beq\label{er1}\sigma_{\pi_0,\kappa_2}\text{ has no atom at }1\eeq
for each Furstenberg system $\kappa_2$.
Assume that $(N_k)$ is given\footnote{Following Proposition 4.1, see also Remark 4.2, both in \cite{Go-Le-Ru}, we aim at showing that for each Furstenberg system $\kappa$ for $f$, we have $\mathbb{E}^\kappa(\pi_0|Inv_\kappa)=0$, equivalently, that the spectral measure $\sigma_{\pi_0,\kappa}$ has no atom at~1. We show below that each Furstenberg system of $f$ is given by the $M$-image (coordinatewise multiplication) of the Cartesian product of Furstenberg systems of $g_1$ and $g_2$.} and that
$$
\frac1{N_k}\sum_{n<N_k}\delta_{g_i\circ S^n}\to\kappa_i,\;i=1,2.$$
Then $\kappa_1$ yields an identity system \cite{Go-Le-Ru}, while $\kappa_2$ yields an ergodic system, so the two systems $(S,\kappa_1)$ and $(S,\kappa_2)$ are disjoint. It follows that
$$
\frac1{N_k}\sum_{n<N_k}\delta_{(g_1,g_2)\circ (S\times S)^n}\to\kappa_1\ot\kappa_2.$$
Hence, $f=g_1\cdot g_2$ is generic along $(N_k)$ for the measure $M_\ast(\kappa_1\otimes \kappa_2)$, where $M$ stands for the coordinatewise multiplication. Then, for each $m\in\Z$,
$$
\widehat{\sigma}_{\pi_0,M_\ast(\kappa_1\ot\kappa_2)}(m)=
\widehat{\sigma}_{\pi_0,\kappa_1}(m)\cdot \widehat{\sigma}_{\pi_0,\kappa_2}(m),$$ and therefore
$$\sigma_{\pi_0,M_\ast(\kappa_1\ot\kappa_2)}=
\sigma_{\pi_0,\kappa_1}\ast \sigma_{\pi_0,\kappa_2}.$$
But
$\sigma_{\pi_0,\kappa_1}=\delta_1$, so
$\sigma_{\pi_0,M_\ast(\kappa_1\ot\kappa_2)}=
\sigma_{\pi_0,\kappa_2}$. By  \eqref{er1}, it follows that
$$
\sigma_{\pi_0,M_\ast(\kappa_1\ot\kappa_2)}\text{ has no atom at }1.$$
Since $M(f)=0$, the result follows.
\end{proof}

Note that $f(n)=n^{it}$ for $n$ odd and $f(2^k)=-2^{kit}$ (considered in Section~\ref{s:nit}) is an example of $f\sim n^{it}$ for which $\|f\|_{u^1}=0$.


\section{Relations between various subclasses of aperiodic multiplicative functions}\label{s:relacje}
Our goal in the next few sections is to clarify various relations between subclasses of bounded multiplicative functions.
\subsection{Liouville-like functions}
\begin{Prop}\label{p:LMA}
If $f\in\mathcal{M}$ is Liouville-like (see Section~\ref{s:Llike}) then $f$ is moderately aperiodic and strongly aperiodic.
\end{Prop}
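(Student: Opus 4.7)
The moderate aperiodicity will follow by plugging $\chi=\chi_0$ (trivial character, modulus $1$) and $t=0$ into the definition. Writing $f=\lambda_{\mathcal{P}}$ and using $f(p)\in\{\pm 1\}$, we have
$$\sum_{p\leq X}\frac{1-\mathrm{Re}(f(p))}{p}=2\sum_{p\in\mathcal{P},\,p\leq X}\frac{1}{p}.$$
Since $\mathcal{P}$ has zero density in $\mathbb{P}$, a partial summation argument (combined with Mertens) gives $\sum_{p\in\mathcal{P},\,p\leq X}1/p=o(\log\log X)$, which upon division by $\log\log X$ is precisely the content of moderate aperiodicity (with any $A>0$).

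The main work is the strong aperiodicity. Fix $Q\geq 1$, consider $\chi$ mod $q\leq Q$ and $|t|\leq X$, and split
$$\sum_{p\leq X}\frac{1-f(p)\mathrm{Re}(\chi(p)p^{it})}{p}=A_X+B_X,$$
with $A_X=\sum_{p\leq X,\,p\notin\mathcal{P}}(1-\mathrm{Re}(\chi(p)p^{it}))/p\geq 0$ and $B_X=\sum_{p\in\mathcal{P},\,p\leq X}(1+\mathrm{Re}(\chi(p)p^{it}))/p\geq 0$. The elementary inequality $1-\lambda\mathrm{Re}(z)\geq \frac{1}{4}(1-\mathrm{Re}(z^2))$, valid for $\lambda\in\{\pm 1\}$ and $|z|=1$, gives
$$A_X+B_X\geq \frac{1}{4}\sum_{p\leq X,\,p\nmid q}\frac{1-\mathrm{Re}(\chi^2(p)p^{2it})}{p}-O(1).$$
When $\chi^2$ is non-principal, Korobov--Vinogradov-type bounds on $L(1+is,\chi^2)$ for $|s|\leq 2X$ (there are only finitely many possible $\chi^2$) yield that this right-hand side is at least $c\log\log X-O(1)$ uniformly in $|t|\leq X$, and we are done in this case.

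It then remains to handle the case $\chi^2=\chi_0$, i.e., $\chi$ a real character, for which I plan a case split on $|t|$. Writing $S_{\mathcal{P}}(X):=\sum_{p\in\mathcal{P},\,p\leq X}1/p$ (divergent by assumption), for $|t|\leq c/\log X$ with $c<\pi/3$ we have $\cos(t\log p)\geq \tfrac12$ for all $p\leq X$: if $\chi=\chi_0$ then $B_X\geq \tfrac32 S_{\mathcal{P}}(X)\to\infty$; if $\chi$ is a non-trivial real character, Dirichlet's theorem on primes in arithmetic progressions combined with the zero density of $\mathcal{P}$ gives $A_X\gg \log\log X$. For $|t|>c/\log X$ I plan to split on the size of the set $U_X=\{p\leq X:\mathrm{Re}(\chi(p)p^{it})<0\}$: if $\sum_{p\in U_X}1/p\leq \tfrac12 S_{\mathcal{P}}(X)$ then $B_X\geq \tfrac12 S_{\mathcal{P}}(X)$ directly; otherwise, classical Mertens-type estimates on $\sum_{p\leq X}\cos(2t\log p)/p$ imply that $\tfrac14 D_X(1,n^{2it})^2$ dominates a positive multiple of $S_{\mathcal{P}}(X)$, which still tends to infinity.

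The main technical hurdle will be the transitional range $|t|\log X\sim 1$, where neither the direct lower bound on $B_X$ nor the reduction to $D_X(1,\chi^2 n^{2it})^2$ is alone sufficient, and where the argument must combine both and exploit the divergence $\sum_{p\in\mathcal{P}}1/p=\infty$ together with the zero density of $\mathcal{P}$ in $\mathbb{P}$.
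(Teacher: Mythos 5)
Your proof of moderate aperiodicity is essentially the paper's: plug in $\chi=\chi_0$, $t=0$, and observe that $\sum_{p\in\mathcal{P},\,p\leq X}1/p=o(\log\log X)$ by the zero-density assumption on $\mathcal{P}$; this matches the paper's one-line computation.

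For strong aperiodicity, however, the paper takes a completely different (and much shorter) route: it simply notes that $f$ is aperiodic (as shown in \cite{Kl-Ma-Te}, or because $f$ satisfies Chowla) and real-valued, and then cites the fact from \cite{Ma-Ra-Ta} (Appendix~C) that any aperiodic, real-valued, bounded multiplicative function is \emph{automatically} strongly aperiodic. Your attempt to prove strong aperiodicity directly via a decomposition $A_X+B_X$ and a case analysis on $\chi^2$ and $|t|$ has a genuine gap, one you yourself flag: in the case $\chi^2=\chi_0$ and the transitional range $|t|\log X\asymp 1$, the bound $B_X\gg S_{\mathcal{P}}(X)$ can fail (many primes in $\mathcal{P}$ may land in $U_X$), while $\tfrac14 D_X(1,n^{2it})^2$ cannot be lower-bounded by a constant times $\log\log X$ either, since $\log|\zeta(1+1/\log X+2it)|$ can absorb almost the full $\log\log X$ when $2|t|\log X$ is of order $1$. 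The appeal to ``$U_X$ large forces $D_X(1,n^{2it})^2$ large'' is also not a clean implication: for a real $\chi$, membership in $U_X$ only constrains the sign of $\chi(p)\cos(t\log p)$, and does not force $1-\cos(2t\log p)=2\sin^2(t\log p)$ to be bounded below (e.g.\ $\cos(t\log p)\approx\pm1$ puts $p\in U_X$ for a suitable sign of $\chi(p)$ while making $\sin^2(t\log p)$ tiny). Closing this case would essentially require reproving the \cite{Ma-Ra-Ta} result that you could instead just quote; I would recommend invoking it directly as the paper does.
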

\begin{proof} First of all $f$ is aperiodic, as shown in \cite{Kl-Ma-Te} (aperiodicity also follows from the fact that $f$ satisfies the Chowla conjecture). Let $A>0$. We want to prove that
$$\lim_{N\to\infty}\frac1{\log\log N}\inf_{|t|<N^A}\min_{\chi~\text{mod}~q, q\leq(\log N)^A}\sum_{p\leq N}\frac1p(1-{\rm Re}(f(p)\chi(p)p^{it}))=0.$$
Consider $t=0$, $\chi=1$. We then have
$$
\lim_{N\to\infty}\frac1{\log\log N}\sum_{p\leq N} \frac1p(1-{\rm Re}(f(p)))=$$$$
\lim_{N\to\infty}\frac{\sum_{p\leq N, p\in\mathcal{P}}\frac2p}{\log\log N}=
\lim_{N\to\infty}\frac{\sum_{p\leq N, p\in\mathcal{P}}\frac2p}{\sum_{p\leq N}\frac1p}=0$$
where the last equality follows by the density assumption on $\mathcal{P}$. Therefore, $f$ is moderately aperiodic. It is also strongly aperiodic because it is aperiodic and real-valued \cite{Ma-Ra-Ta}.\end{proof}

\begin{Remark}\label{r:trma} The proof of course shows that each aperiodic function $f\in \mathcal{M}$ such that $f(p)=1$ on a set $\mathcal{Q}\subset\mathbb{P}$ of full density in $\mathbb{P}$ is moderately aperiodic. Note that despite the fact that in this case $\mathbb{P}\setminus\mathcal{Q}$ has zero density, it can be ``large'' in the sense that $\sum_{p\in \mathbb{P}\setminus\mathcal{Q}}\frac1p=+\infty$.\end{Remark}

\subsection{Trivial aperiodic functions}
We consider now $f\in\mathcal{M}$ for which $\|f\|_B=0$, we call them {\em trivial} (all such functions are aperiodic).
\begin{itemize}
\item All trivial functions are strongly aperiodic.

Indeed, using Footnote~\ref{f:btrivial},
$$\inf_{|t|\leq N}\sum_{p\leq N}\frac1p\big( 1-{\rm Re}(f(p)\chi(p)p^{it})\big)\geq \sum_{p\leq N}\frac1p\big(1-|f(p)|\big)\to\infty$$
when $N\to\infty$.

\item There are trivial functions which are not moderately aperiodic.

Indeed, if we assume that $|f(p)|\leq 1-\eta_0$ for some $\eta_0>0$ and all $p\in\PP$ then
$$
\inf_{|t|\leq N^A}\min_{\chi\text{ mod }q;q\leq(\log N)^A}\sum_{p\leq N}\frac1p\big( 1-{\rm Re}(f(p)\chi(p)p^{it})\big)\geq$$$$
\sum_{p\leq N}\frac1p\big( 1-|f(p)|\big)\geq \frac{\eta_0}2\log\log N$$
and the claim follows.

\item There are trivial functions which are moderately aperiodic.

Indeed, let $\mathcal{Q}\subset \PP$ be a subset of full density in $\PP$ satisfying $\sum_{p\in\PP\setminus \mathcal{Q}}\frac1p=+\infty$. Set $f(p)=1$ for $p\in \mathcal{Q}$ and $f(p)=0$ for the remaining $p$. We have
$$
\sum_{p\in\PP}\frac1p(1-|f(p)|)=+\infty,$$
so $\|f\|_B=0$. It follows from Remark~\ref{r:trma} that $f$ is moderately aperiodic.
\end{itemize}

\subsection{Liouville function is not moderately aperiodic}\label{sec:Liouville_not_MA}
It has already been noticed in \cite{Kl-Ma-Te} that the conditions defining moderately aperiodic  and strongly aperiodic functions seem to be ``independent''. However, no explicit examples are provided. We show below that the Liouville function is strongly aperiodic but it is not moderately aperiodic and leave open the question of whether there exists a moderately aperiodic function which is not strongly aperiodic.

\begin{Prop}\label{p:kmt7}
The Liouville function $\la$ is not moderately aperiodic.\end{Prop}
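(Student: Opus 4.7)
The plan is to prove the equivalent quantitative statement: for every $A>0$ there exists $c=c(A)>0$ such that
\begin{equation}\label{eq:liouvma}
\sum_{p\leq X}\frac{1+{\rm Re}(\chi(p)p^{it})}{p}\;\geq\;c\log\log X
\end{equation}
for every $X$ sufficiently large, every Dirichlet character $\chi$ of modulus $q\leq(\log X)^A$, and every real $|t|\leq X^A$. Since $\lambda(p)=-1$, the left-hand side is exactly the sum inside the $\liminf$ in the definition of moderate aperiodicity (see footnote~\ref{foot:MA}), so \eqref{eq:liouvma} precludes moderate aperiodicity for any $A$.

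The first step is Mertens' theorem together with the fact that $\chi(p)=0$ for $p\mid q$, which gives
$$\sum_{p\leq X}\frac{1+{\rm Re}(\chi(p)p^{it})}{p}\;=\;\log\log X\;+\;{\rm Re}\sum_{p\leq X}\frac{\chi(p)p^{it}}{p}\;+\;O(1).$$
Thus \eqref{eq:liouvma} reduces to showing the real part of the twisted partial sum is never too negative, more precisely $\geq-(1-c)\log\log X+O_A(1)$ uniformly in the allowed range.

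I split into two subcases. \emph{Large $|t|$} (say $|t|\geq X^{\varepsilon}$ for some small fixed $\varepsilon>0$): after splitting by residue classes modulo $q$ and invoking Siegel--Walfisz (valid because $q\leq(\log X)^A$), integration by parts on the archimedean character sum yields $\bigl|\sum_{p\leq X}\chi(p)p^{it}/p\bigr|=O_A(1)$, so \eqref{eq:liouvma} holds with $c=1-o(1)$. \emph{Moderate $|t|$} ($|t|\leq X^{\varepsilon}$): a standard Mertens--Dirichlet argument combined with a Perron truncation gives
$\sum_{p\leq X}\chi(p)p^{it}/p=\log L(1+it,\chi)+O_A(1)$, hence ${\rm Re}\sum=\log|L(1+it,\chi)|+O_A(1)$. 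The unconditional Vinogradov--Korobov zero-free region yields $|L(1+it,\chi)|\gg(\log(q(|t|+2)))^{-2/3}$, so $\log|L(1+it,\chi)|\geq-\tfrac{2}{3}\log\log X-O_A(1)$, producing \eqref{eq:liouvma} with $c=\tfrac{1}{3}-o(1)$. A possible Siegel zero of an exceptional real character is handled by Siegel's theorem $L(1,\chi)\gg_{\varepsilon_0}q^{-\varepsilon_0}$, the resulting loss $\varepsilon_0 A\log\log X$ being absorbed into $c(A)$ by choosing $\varepsilon_0$ sufficiently small.

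The main obstacle I anticipate is uniformity: turning the formal identity $\log L(1+it,\chi)=\sum_p\chi(p)p^{-it}/p+O(1)$ into a partial-sum statement with an $O_A(1)$ error requires a careful truncation (Perron's formula together with a standard zero-density bound) that keeps all implied constants uniform across $q\leq(\log X)^A$ and $|t|\leq X^{\varepsilon}$.
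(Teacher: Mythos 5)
Your approach is genuinely different from the paper's. The paper deduces the result from Lemma~\ref{l:kmt8} (cited from Klurman--Mangerel--Ter\"av\"ainen): if $\la$ were moderately aperiodic, there would exist a \emph{single fixed} pair $(\chi,t)$ and $\eta(x)\to 0$ with $\mathbb{D}(\la,\chi\cdot n^{it};x^{\eta(x)},x)^2\le\eta(x)$ infinitely often. The restriction to the short range $[x^{\eta(x)},x]$ with fixed $(\chi,t)$ then permits an essentially elementary contradiction: Mertens for the principal case, and for $t\neq 0$ a direct count (via the PNT) of primes in the intervals $I_r$ where $\cos(t\log p)\ge -1/2$, which already yields a sum bounded below by a positive constant, incompatible with the bound $\eta(X_k)\to 0$. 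You instead attack the full-range quantity head on, aiming at a uniform lower bound $\gg_A\log\log X$ over all $q\le(\log X)^A$ and $|t|\le X^A$, using Mertens together with lower bounds for $|L(1+it,\chi)|$ coming from the Vinogradov--Korobov zero-free region and Siegel's theorem for the possible exceptional real character. If carried through, your route proves something quantitatively stronger than the paper's statement (a uniform lower bound on the defining quantity, with an ineffective constant), but it relies on substantially heavier analytic machinery. One point is off as stated: in the ``large $|t|$'' subcase you claim $\bigl|\sum_{p\le X}\chi(p)p^{it}/p\bigr|=O_A(1)$ for $|t|\ge X^\varepsilon$, but this is too optimistic --- the real part can be as negative as roughly $-\tfrac23\log\log X$, since $\log|L(1+it,\chi)|$ is only bounded below by $-\tfrac23\log\log(q(|t|+2))+O(1)$. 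This does not sink the argument, because the Vinogradov--Korobov bound in your ``moderate $|t|$'' branch already applies uniformly for all $|t|\le X^A$, so the separate large-$|t|$ subcase is superfluous and should be deleted. The Perron truncation and uniformity issues you flag are genuine; they are exactly the kind of technicalities the paper bypasses by invoking Lemma~\ref{l:kmt8}, which fixes $(\chi,t)$ and shortens the summation range.
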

Before we start the proof, let us recall the following result which follows from Lemma 6.1 from \cite{Kl-Ma-Te} (and the remark just after this lemma):

\begin{Lemma}\label{l:kmt8} If $f\in\mathcal{M}$ is moderately aperiodic then there exist a Dirichlet character $\chi$, $t\in\R$ and $\eta=\eta(x)>0$ with $\eta(x)\to0$ when $x\to\infty$, such that
$$
\mathbb{D}(f,\chi\cdot n^{it};x^{\eta(x)},x)^2\leq \eta (x)\text{ for infinitely many }x\to\infty.$$
\end{Lemma}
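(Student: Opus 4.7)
The plan is to argue by contradiction using Lemma~\ref{l:kmt8}. Suppose that $\la$ were moderately aperiodic; the lemma then furnishes a \emph{fixed} Dirichlet character $\chi$, a \emph{fixed} real number $t$, and a positive function $\eta(x)\to 0$ such that, for infinitely many $x\to\infty$,
\[
\mathbb{D}(\la,\chi\cdot n^{it};x^{\eta(x)},x)^2 \leq \eta(x).
\]
The decisive feature is that $\chi$ and $t$ do not depend on $x$, so classical estimates for partial sums of $\chi(p)p^{it}/p$ apply uniformly along the relevant sequence of $x$'s.

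Using $\la(p)=-1$ for every prime $p$ together with ${\rm Re}(\overline{z})={\rm Re}(z)$, I would rewrite the left-hand side as
\[
\mathbb{D}(\la,\chi\cdot n^{it};x^{\eta(x)},x)^2=\sum_{x^{\eta(x)}<p\leq x}\frac{1}{p}+{\rm Re}\sum_{x^{\eta(x)}<p\leq x}\frac{\chi(p)p^{it}}{p}.
\]
By Mertens's theorem the first sum equals $\log(1/\eta(x))+o(1)$ and therefore tends to $+\infty$ as $x\to\infty$ (with only a harmless correction when $\eta(x)\log x$ fails to diverge, in which case one still has the lower bound $\log\log x-O(1)$).

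For the oscillatory sum I would split into two cases. If the pair $(\chi,t)$ is non-trivial, i.e.\ either $\chi$ is not the principal character or $t\neq 0$, then by the prime number theorem in arithmetic progressions together with partial summation the full series $\sum_{p\leq N}\chi(p)p^{it}/p$ converges as $N\to\infty$ (its value being related to $\log L(1+it,\chi)$). Consequently the truncation $\sum_{x^{\eta(x)}<p\leq x}\chi(p)p^{it}/p$ is $o(1)$, and the total becomes $\log(1/\eta(x))+o(1)$. If instead $\chi$ is principal and $t=0$, then (up to finitely many primes dividing the conductor) the oscillatory sum coincides with the first sum, and the total is asymptotic to $2\log(1/\eta(x))$.

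In every situation I therefore obtain
\[
\mathbb{D}(\la,\chi\cdot n^{it};x^{\eta(x)},x)^2\geq\log(1/\eta(x))+O(1)\xrightarrow[x\to\infty]{}+\infty,
\]
which directly contradicts the bound $\leq\eta(x)\to 0$ given by Lemma~\ref{l:kmt8}. Hence $\la$ is not moderately aperiodic. The only analytic input beyond the lemma is the classical convergence of $\sum_{p\leq N}\chi(p)p^{it}/p$ for fixed non-trivial $(\chi,t)$; so I do not foresee a genuine obstacle, and the argument is essentially a short calculation once Lemma~\ref{l:kmt8} has been invoked.
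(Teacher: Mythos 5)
Your proposal does not prove the statement it was assigned. The statement is Lemma~\ref{l:kmt8} itself; what you have written is an argument for a \emph{downstream consequence} of that lemma (namely, that the Liouville function is not moderately aperiodic, which is Proposition~\ref{p:kmt7} in the paper), and your very first step is to ``invoke Lemma~\ref{l:kmt8}''. With respect to the assigned statement this is circular: you assume exactly what you are asked to establish. The substantive content of Lemma~\ref{l:kmt8} is the point you yourself flag as ``the decisive feature'' and then never justify: the definition of moderate aperiodicity only gives, for each scale $X$, an infimum over $|t|\leq X^A$ and a minimum over characters of modulus $\leq(\log X)^A$ that is $o(\log\log X)$, so the near-minimizing pair $(\chi,t)$ may a priori change with $X$ and range over a growing family. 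Upgrading this to a single \emph{fixed} Dirichlet character $\chi$ and a single \emph{fixed} $t\in\R$ that work for infinitely many $x$, and moreover localizing the distance to the truncated range $[x^{\eta(x)},x]$ with the same quantity $\eta(x)$ appearing on both sides, is precisely the lemma's job. None of that appears in your write-up. For the record, the paper does not reprove this either: it derives Lemma~\ref{l:kmt8} by citing Lemma~6.1 of Klurman--Mangerel--Ter\"av\"ainen and the remark following it, so a self-contained proof would have to reproduce (or adapt) that argument.

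As a secondary remark: even read as an attempt at Proposition~\ref{p:kmt7}, your route differs from the paper's. You appeal to the convergence of $\sum_{p}\chi(p)p^{-1-it}$ for fixed non-trivial $(\chi,t)$ to kill the oscillatory sum, whereas the paper argues more hands-on, extracting from $[x^{\eta(x)},x]$ a positive proportion of dyadic-type intervals $I_r$ on which $1+{\rm Re}\,p^{it}\geq 1/2$ and summing $1/p$ over them; the two are compatible, and yours is shorter where it applies. But that comparison is moot here, since neither version addresses the lemma you were asked to prove.
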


We recall that for $g,h\in\mathcal{M}$,
$$
\mathbb{D}(g,h; x,y)^2=\sum_{x\leq p<y}\frac1p(1-{\rm Re}(g(p)\ov{h(p)}))$$
and that
for this ``distance'' we have the triangle inequality, and
\beq\label{mno}
\mathbb{D}(gg',hh'; x,y)\leq \mathbb{D}(g,h; x,y)+ \mathbb{D}(g',h'; x,y),\eeq
see Lemma 3.1 in \cite{Gr-So0}.

\vspace{2ex}

\noindent
{\em Proof of Proposition~\ref{p:kmt7}}. Suppose that $\la$ is moderately aperiodic. In view of Lemma~\ref{l:kmt8}, there exist $\chi,t,\eta$ with $X_k\to\infty$ such that
\beq\label{kmt9}
\sum_{X_k^{\eta(X_k)}\leq p\leq X_k}\frac1p(1+{\rm Re}(\chi(p)p^{it}))\leq \eta(X_k).\eeq

{\bf Case 1.} $\chi=1,t=0$. In this case~\eqref{kmt9} reads (using Mertens' theorem)
$$
\sum_{X_k^{\eta(X_k)}\leq p\leq X_k}\frac2p=2(\log\log X_k+C+o(1)-(\log\log X_k^{\eta(X_k)}+C+o(1)))=$$$$
-2\log\eta(X_k)+o(1)\to+\infty,$$
which is a contradiction. (Note that the same argument applies if $\chi$ is principal since $\chi(p)=1$ for $p>N_0$.)

{\bf Case 2.} $\chi=1, t\neq0$.
We consider the intervals $I$ for which we have $1+{\rm Re}\, p^{it}\geq1/2$ for $p\in I$; equivalently, we require
$$
\cos(t\log p)\geq-\frac12.$$
Note that this is the case whenever
$$
\frac43\pi\leq t\log p\leq \frac{8}3\pi,$$
while if $t\log p$ is in the interval $(\frac23\pi,\frac43\pi)$, we have $\cos(t\log p)<-1/2$. By shifting the intervals by $2\pi$, we obtain the following relations:
If
$$
t\log p\in \Big(\frac23\pi+2r\pi,\frac43\pi+2r\pi\Big)\; \Longrightarrow\; 1+{\rm Re}\, p^{it}<\frac12;$$
$$
t\log p\in \Big[\frac43\pi+2r\pi, \frac{8}3\pi+2r\pi\Big]\;\Longrightarrow\; 1+{\rm Re}\,p^{it}\geq\frac12.$$
Denote
$$
J_r:=\Big(e^{\frac2{3t}\pi+2\frac rt\pi},e^{\frac4{3t}\pi+2\frac rt\pi}\Big),\;
I_r:=\Big[e^{\frac4{3t}\pi+2\frac rt\pi}, e^{\frac{8}{3t}\pi+2\frac rt\pi}\Big].$$
Note that the intervals we have defined are pairwise disjoint and their union covers a half-line contained in $[0,\infty)$. Moreover, their length is up to some multiplicative constants of the same order as the left-end (or right-end) point.
When $p\in I_r$ then $1+{\rm Re}\,p^{it}\geq\frac12$. By the PNT there exist $r_0$ and  a constant $D>0$ such that for all $r\geq r_0$ the number of primes in $I_r$ is at least $D\frac{e^{2\frac rt\pi}}{r/t}$. It follows that
\beq\label{ten}
\sum_{p\in I_r}\frac1p(1+{\rm Re}\, p^{it})\geq D' \frac{e^{2\frac rt\pi}}{r/t}\cdot \frac1{e^{\frac{8}{3t}\pi+2\frac rt\pi}}\geq D''\frac1r.\eeq

Now, take $k\geq 1$ large enough so that $\delta:=\eta(X_k)>0$ is very small and find the largest $r$ so that the left-end of $I_{r-1}$ is smaller than $X_k^\delta$, then the left-end of $I_r$ is $>X_k^\delta$. Note that $I_r\subset [X_k^\delta,X_k]$ because the lengths of intervals $J_r,I_r$ are of the same order as their end-point, hence are of the same order as $X_k^\delta$ (with $\delta$ small and $X_k$ very big). We now consider all intervals $I_r,I_{r+1},\ldots,I_{r+s}$ which are contained in $[X_k^\delta,X_k]$. We claim that $s\geq r$. Indeed,
the right-end point of $I_r$, that is,  $e^{\frac{8}{3t}\pi+2\frac{r}t\pi}$ is up to a (multiplicative) constant equal to $X_k^\delta$, then the right-end of $I_{r+j}$ is $e^{\frac{8}{3t}\pi+2\frac{r+j}t\pi}$, so by taking $j=r$, we obtain $e^{\frac{8}{3t}\pi+2\frac{2r}t\pi}$ which is up to a (multiplicative) constant $X_k^{2\delta}$, so much smaller than $X_k$. It now follows from \eqref{ten} that
$$
\sum_{X_k^\delta\leq p\leq X_k}\frac1p(1+{\rm Re}\,p^{it})\geq const. \sum_{j=1}^r\frac1{r+j}\geq const.>0$$
which yields a contradiction and therefore, completes the proof.

{\bf Case 3.} $\chi$ arbitrary and $t\neq0$.
Let $q$ be a modulus of $\chi$. If $\lambda$ is moderately aperiodic then in view of \eqref{mno},
$$
\mathbb{D}(\la^q,n^{iqt};x^{\eta(x)},x)\leq
q\mathbb{D}(\la,\chi\cdot n^{it};x^{\eta(x)},x),$$
so if $q$ is odd then $\lambda^q=\lambda$ and by considering $x=X_k$ we obtain that the upper bound for the l.h.s. is $q\eta(X_k)$ which leads to a contradiction as in the previous case. If $q$ is even, we obtain
$$
\mathbb{D}(1,n^{qit}; X_k^\delta,X_k)<\eta(X_k)$$
which leads to a contradiction by the same argument.

{\bf Case 4.} $\chi\neq1$ and $t=0$.
We apply the same argument since along an arithmetic progression $q\N+j$ (with $j$ and $q$ coprime) the Dirichlet character will be constant and the sum
$$
\sum_{X_k^\delta\leq p\leq X_k, p\in q\Z+j}\frac1p.$$
can be handled using relevant version of Mertens theorem for arithmetic progressions ($\sum_{p\leq N, p\in q\N+j}\frac1p\sim \frac1{\phi(q)}\log\log N$.)


\subsection{``MRT-modified" - a class of aperiodic functions which are neither strongly aperiodic nor moderately aperiodic}
We briefly outline the construction of modified MRT functions, which possess property from the title of this section. These functions satisfy the following conditions: there are  increasing sequences of natural numbers $(t_m)$, $(s_m)$ and $(\tilde{t}_m)$ such that
$$
t_m<s_{m+1}<s_{m+1}^2<\tilde{t}_{m+1}<t_{m+1}$$
and
$$f(p)=p^{is_{m+1}}\text{ for }p\in [t_m,\tilde{t}_{m+1}],\; f(p)=-1\text{ for }p\in [\tilde{t}_{m+1},t_{m+1}]$$ and for $p\in [1,t_m)$, we require that $$|f(p)-p^{is_{m+1}}|<1/t_m^2.$$ We can carry out the construction of such functions in exactly the same manner as in \cite{Ma-Ra-Ta}. For our purposes, we impose an extra assumption
\beq\label{doublelg}
t_{m+1}>e^{e^{\widetilde{t}_{m+1}}}.
\eeq
Notice that~\eqref{doublelg} implies that
\beq\label{lglg}
\text{ the upper double }\log-\text{density of }\bigcup_m[\widetilde{t}^{m+1}_{m+1},t_{m+1}]\text{ is positive}.\eeq

Then, we proceed as in the proof of Proposition 3.4 in \cite{Go-Le-Ru} to show that for each sequence $(N_m)$ for which $N_m<\widetilde{t}_{m+1}$ for all $m\geq1$, we have
\beq
\label{glr1}\lim_{m\to\infty}\frac1{N_m}\Big|\Big\{n\leq N_m\colon |f(n)-n^{is_{m+1}}|>\frac1{t_m}\Big\}\Big|=0.\eeq
Next,  for any $d\geq1$ and $\frac1{d+1}<\beta<\frac1d$, consider $N_m=[s_{m+1}^\beta]$, $m\geq1$. We can show that along this sequence we obtain a Furstenberg system, precisely the same as in \cite{Go-Le-Ru}, since in \cite{Go-Le-Ru} only~\eqref{glr1} is used and the analytic arguments used in \cite{Go-Le-Ru} do not depend on the primes in the interval $[\widetilde{t}_{m+1},t_{m+1}]$. It follows in particular that $f$ satisfies the Chowla conjecture along a subsequence, whence $f$ is aperiodic.

Now, note that $f$ cannot be strongly aperiodic by the same argument which we used to show that the MRT functions are not strongly aperiodic (using $X=\tilde{t}_{m+1}$, $t=s_{m+1}$ and $\chi=1$).

In order to show that $f$ is not moderately aperiodic, we can proceed as in the proof of Proposition~\ref{p:kmt7}, however we require $X_k$ to be in the region $[\tilde{t}^{m+1}_{m+1},t_{m+1}]$. Such a choice of $X_k$ is possible because  Lemma~\ref{l:kmt8} holds for the set of $x$ of full double-log density, see \cite{Kl-Ma-Te}, and \eqref{doublelg} holds.
Notice that if $X_k^{\eta(X_k)}>\widetilde{t}_{m+1}$ then our reasoning follows exactly the same lines of the proof of Proposition~\ref{p:kmt7}. If this inequality fails we first find the largest $r\geq1$ so that the left-end point of $I_{r-1}$ is $<\widetilde{t}_{m+1}$  and show that
the intervals $I_r,I_{r+1},\ldots, I_{2r}$ are all contained in $[\widetilde{t}_{m+1},t_{m+1}]$.

\section{Appendix}
\subsection{Product of automatic sequences need not be automatic}\label{s:przyklad}
Given two different primes $p,q$, we consider the multiplicative functions  $f(n)=(-1)^{\nu_p(n)}$, $f'(n)=(-1)^{\nu_q(n)}$. Note that we have~\eqref{ilo} with $\chi=1=\chi'$ and $F=\{p\}$, $F'=\{q\}$, so  the sequence
\beq\label{przyklad}
h(n):=(-1)^{\nu_p(n)+\nu_q(n)}, \;n\geq1,
\eeq
satisfies the BBC conjecture. However, $h$
is not automatic.

Indeed, a short analysis shows that $f$ (resp. $f'$) is a Toeplitz sequence with the period structure $(p^n)$ (resp. $(q^n)$). Hence (see Footnote~\ref{f:23})
the spectrum of any period structure of $h$ is contained in $\{p,q\}$. We show that both the $p$-valuation and the $q$-valuation of the periods of $h$ are infinite. Suppose otherwise, e.g. $b=\nu_p(Per(h))<\infty$. Then, by Lemma~\ref{lem:general}, the position $p^{b+1}$ has the associated period of the form $p^bq^c$ for some $c\in\N_0$. But there exists $k\in\N$ such that $p+kq^c$ is coprime with $p,q$ and therefore $(-1)^{b+1}=h(p^{b+1})=h(p^b(p+kq^c))=h(p^b)=(-1)^b$, a contradiction.  So the period structure of $h$ is $(p^nq^n$) and $h$ is not automatic in view of Proposition~\ref{prop:auto_Toeplitz}.


Another proof of the fact that $h$ is not automatic uses dynamical arguments. Indeed,
note that $f$ and $f'$ are both automatic and they determine uniquely ergodic systems $(X_f,\nu,S)$ and $(X_{f'},\nu',S)$. As measure-theoretic systems they have discrete spectrum with the group of eigenvalues  generated by the roots $e^{2\pi i/p^n}$ ($n\in\N_0$) for the first system by $e^{2\pi i/q^n}$ for the second one. Moreover, $f,f'$ are generic points for $\nu,\nu'$, respectively. As the measure-theoretic systems are disjoint in the Furstenberg sense, $(f,f')$ is a generic point for the product measure $\nu\ot\nu'$. Then $h=f\cdot f'$ is a generic point for the convolution $\nu\ast\nu'$, and the system
$(X_{h},\nu\ast\nu',S)$ is a uniquely ergodic factor (via the coordinatewise multiplication map) of the Cartesian product
$$(X_{f}\times X_{f'},\nu\otimes\nu',S\times S),$$
with the latter system being uniquely ergodic (by disjointness).
Now, if $h$ were automatic, it would have to be $r$-automatic for some $r\in\mathbb{P}$ by \cite{Ko-Le-Mu}. Then, by the general theory of (primitive) automatic sequences, the spectrum of $(X_h,\nu\ast\nu',S)$ being
the group generated by $e^{2\pi i/r^n}$ (modulo a finite group of roots of unity) must be a subgroup of the group generated by the eigenvalues of $(X_f,\nu,S)$ and $(X_{f'},\nu',S)$, which is possible only if $r=p$ or $r=q$. In the first case, it would mean that the factor (corresponding to $\nu\ast\nu'$) given by the eigenfunctions corresponding to the eigenvalues $e^{2\pi i/r^n}$, $n\geq1$, as a $\sigma$-algebra would have to be contained in the ``first coordinate'' which yields a contradiction by the following filtering type argument: the first coordinate $\sigma$-algebra together with the factor given by $\nu\ast\nu'$ generate the whole Borel structure of $X_f\times X_{f'}$. Indeed, if we take $x\in X_f$ and $x'\in X_{f'}$ and we know $x$ and $x\cdot x'$ then we also know $x'$ (we use here the fact that we consider sequences without zero elements). Similarly, we obtain a contradiction if $r=q$.

%

\subsection{Proof of Proposition~\ref{prop:periodic}}\label{s:periodic}

Proposition \ref{prop:periodic} is a direct consequence of the following, more precise statement.

\begin{Prop}\label{prop:periodic_precise}
Assume that $f$ is nonzero multiplicative and $M$-periodic. Let $\theta$ be its supporting character with the conductor $t$ (see Remark \ref{rem:pushout_nowa}). Then
$$
f(q^{b_q+\ell})=f(q^{b_q})\theta(q^{\ell})
$$
for every prime $q$ and $\ell\ge 0$, where $b_q:=\nu_q(M/t)$.
\end{Prop}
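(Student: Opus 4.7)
The plan is to derive the formula from the interplay of $M$-periodicity and multiplicativity, mediated by Theorem~\ref{th:chiporaz1} and Remark~\ref{rem:pushout_nowa}. From the former, there is a Dirichlet character $\chi$ of some modulus $m \mid M$ satisfying $f(n)=\chi(n)$ for $n \in m^\perp$, and by the latter $\chi$ is induced by $\theta$ with conductor $t \mid M$. Writing $b_q=\nu_q(M)-\nu_q(t)$, the claim splits into two regimes: if $q \nmid t$ then $\theta(q^\ell) \ne 0$ and one expects a genuine factorization, whereas if $q \mid t$ then $\theta(q^\ell)=0$ for $\ell \geq 1$ and one must prove $f(q^{b_q+\ell})=0$.

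For the first regime ($q \nmid t$), I would write $M=q^{b_q}M'$ with $\gcd(M',q)=1$. For $\ell \geq 1$ the decomposition $q^{b_q+\ell}+M = q^{b_q}(q^\ell+M')$ has $\gcd(q,q^\ell+M')=\gcd(q,M')=1$, so multiplicativity together with $M$-periodicity give $f(q^{b_q+\ell}) = f(q^{b_q})\,f(q^\ell+M')$. Since $q^\ell+M'$ is coprime to $M$ (coprime to $q$ by the above, and $\equiv q^\ell \pmod p$ for any other $p \mid M$, since such a $p$ divides $M'$), one has $f(q^\ell+M') = \theta(q^\ell+M')$. Finally $t \mid M'$ follows from $t \mid M$ together with $\gcd(t,q)=1$, and hence $\theta(q^\ell+M')=\theta(q^\ell)$. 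The case $\ell=0$ is trivial, and the subcase $q \nmid M$ (where $b_q=0$ and $M'=M$) is covered by the same computation.

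The hard part will be the second regime, $q \mid t$ and $\ell \geq 1$. The strategy is to locate $a \in \N$ with $\gcd(a,M)=1$, $q^{b_q+\ell}\,a \equiv q^{b_q+\ell} \pmod{M}$, and $\theta(a) \ne 1$; then multiplicativity and periodicity force $f(q^{b_q+\ell})=f(q^{b_q+\ell}\,a) = f(q^{b_q+\ell})\,\theta(a)$, hence $f(q^{b_q+\ell})=0$. Writing $M=q^{\nu_q(M)}M''$ with $\gcd(M'',q)=1$, the required congruence becomes $a \equiv 1 \pmod{N_\ell}$ with $N_\ell = q^{\max(\nu_q(t)-\ell,\,0)}M''$. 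The main obstacle is then to show that, as $j$ varies, the residues $1+jN_\ell \pmod t$ cover the entire coset $\{b \in \Z/t\Z : b \equiv 1 \pmod{t/q^{\min(\ell,\nu_q(t))}}\}$; since $\min(\ell,\nu_q(t))\geq 1$ this coset sits over a proper divisor of $t$, and primitivity of $\theta$ immediately supplies the desired $a$ (automatically coprime to $M$, since coprimality to $t$ forces coprimality to $q$, while $a\equiv 1 \pmod{M''}$ handles the other prime factors). That coverage reduces to checking that $N_\ell$ has additive order exactly $q^{\min(\ell,\nu_q(t))}$ in $\Z/t\Z$, which follows from $t' \mid M''$ (where $t'=t/q^{\nu_q(t)}$) and from $M''/t'$ being a unit modulo $q^{\nu_q(t)}$.
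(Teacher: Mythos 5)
Your proof is correct. The first regime ($q\nmid t$) follows the same decomposition as the paper's Case~1: write $M=q^{b_q}M'$ with $\gcd(M',q)=1$, use multiplicativity and $M$-periodicity to peel off $f(q^{b_q})$, and evaluate the remaining factor $f(q^\ell+M')=\theta(q^\ell+M')=\theta(q^\ell)$ via $t\mid M'$.

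The second regime ($q\mid t$) is where you genuinely diverge from the paper. The paper splits the exponents into three ranges and, for the key step $f(q^{\nu_q(M)})=0$, constructs an auxiliary completely multiplicative function $\psi$ on residues coprime to $q$, verifies that it is a Dirichlet character modulo $M'=M/q^{\nu_q(M)}$ inducing $\chi$, and derives a contradiction with $t\mid M'$; the range $\nu_q(M/t)<c<\nu_q(M)$ is then handled by a separate argument showing $M/q^c$ would otherwise be a period of position~1. You instead handle all $\ell\geq1$ uniformly: you look for a single unit $a$ with $a\equiv 1\pmod{N_\ell}$ (so $q^{b_q+\ell}a\equiv q^{b_q+\ell}\pmod M$), $\gcd(a,M)=1$, and $\theta(a)\ne1$, whence $f(q^{b_q+\ell})=f(q^{b_q+\ell}a)=f(q^{b_q+\ell})\theta(a)$ forces $f(q^{b_q+\ell})=0$. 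The existence of $a$ is exactly primitivity of $\theta$ applied to the proper divisor $\gcd(N_\ell,t)=t/q^{\min(\ell,\nu_q(t))}$ of $t$; your $\gcd$ computation is right ($t'\mid M''$ and $\gcd(M'',q)=1$ suffice—the extra remark that $M''/t'$ is a unit mod $q^{\nu_q(t)}$ is just a restatement of $\gcd(M'',q)=1$ and is not an additional hypothesis). One small thing worth making explicit: after primitivity hands you a residue class $a_0\in(\Z/t\Z)^\times$ with $a_0\equiv1\pmod{\gcd(N_\ell,t)}$ and $\theta(a_0)\ne1$, you still need a CRT lift to an integer satisfying $a\equiv1\pmod{N_\ell}$ and $a\equiv a_0\pmod t$ simultaneously, which is possible precisely because the two congruences agree modulo $\gcd(N_\ell,t)$. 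Your argument is arguably more economical—it avoids verifying $\psi$ is a bona fide character and does not require the paper's reduction to the minimal period—while the paper's construction of $\psi$ carries some structural information about what would happen if $f(q^{\nu_q(M)})$ were nonzero.
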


\begin{proof} We can assume that $M$ is the minimal period of $f$, then  $\spec(Per(f))=\spec(M)=\spec(m_1)$, where $m_1$ is the minimal period associated to the position 1 (see Lemma~\ref{lem:minperiod1}).
Let $q$ be a prime and assume that
 $\chi$ is the Dirichlet character of modulus $m_1$ such that $\chi(n)=f(n)$ for $n\in m_1^{\perp}=M^{\perp}$ as in Theorem \ref{th:chiporaz1}.

\underline{Case 1}: $q\nmid t$. In this case $\nu_q(M)=\nu_q(M/t)=b_q$, the number $M/q^{b_q}$ is coprime with $q$ and divisible by $t$.  Fix $\ell>0$. Clearly,  $q^{\ell}+\frac{M}{q^{b_q}}$ is coprime with $q$ and also belongs to $m_1^\perp$.
Then
$$
\begin{array}{l}
f(q^{b_q+\ell})=f(q^{b_q+\ell}+M)=f(q^{b_q}(q^{\ell}+\frac{M}{q^{b_q}}))=f(q^{b_q})f(q^{\ell}+\frac{M}{q^{b_q}})=\\
f(q^{b_q})\chi(q^{\ell}+\frac{M}{q^{b_q}})=f(q^{b_q})\theta(q^{\ell}+\frac{M}{q^{b_q}})=f(q^{b_q})\theta(q^{\ell}),
\end{array}
$$
the last equality follows since $t$ divides $M/q^{b_q}$.

\underline{Case 2}: $q|t$. Let $a=\nu_q(M)$ and $M'=\frac{M}{q^a}$. Then $M'$ is coprime with $q$. We divide the proof into three steps.

(i) We claim that $f(q^a)=0$. Indeed, suppose otherwise and denote by $R(n)$ the set
$$
(n+M'\N_0)\cap q^{\perp}.
$$
Since $M'$ is coprime with $q$, the set $R(n)$ is nonempty\footnote{Applying Dirichlet's theorem, consider $\gcd(n,M')x$ with $x$ prime sufficiently large and  $x\in\frac{n}{\gcd(n,M')}+\frac{M'}{\gcd(n,M')}\N$.} for every $n\in\N$. If $n+kM'\in R(n)$, then
$$
f(q^an)=f(q^an+kM)=f(q^a(n+kM'))=f(q^a)f(n+kM').
$$
Since we have supposed $f(q^a)\neq 0$, it follows that $f$ is constant on $R(n)$. Let $\psi(n)$ denote the value of $f$ on $R(n)$ for $n\in M'^{\perp}$.  Moreover, we set $\psi(n)=0$ for $n\notin M'^{\perp}$.  Observe that $f(n)=\psi(n)$ for $n\in M^{\perp}$ (since  $a\geq1$, so  $n\in R(n)$ in that case), so $\psi(n)=\theta(n)$ for $n\in M^{\perp}$. Moreover, as (for each $n$)
$$
R(n+M')\subset R(n)\text{ and } \spec(M')^{\perp}+M'=\spec(M')^{\perp},$$
it follows that $\psi$ is $M'$-periodic. We show now that $\psi$ is completely multiplicative.   Indeed,  let $n_1,n_2\in\N$. If one of $n_1,n_2$ has a divisor in $\spec(M')$, then $\psi(n_1)\psi(n_2)=0=\psi(n_1n_2)$. Otherwise (by applying Dirichlet's theorem), there exist $k_1,k_2\in\N$ such that $n_1+k_1M'\in R(n_1)$, $n_2+k_2M'\in R(n_2)$  and $\gcd(n_1+k_1M',n_2+k_2M')=1$. Then
$$
\psi(n_1)\psi(n_2)=f(n_1+k_1M')f(n_2+k_2M')=f(n_1n_2+kM')=\psi(n_1n_2),
$$
where $k=n_1k_2+n_2k_1+k_1k_2M'$.

Moreover,  by Lemma \ref{eq:periodof1}, $\psi(n)=f(n)\neq 0$ for $n\in M^{\perp}$, as $M^{\perp}=m_1^{\perp}$. Also, $q^j+M'\in R(q^j)$ and $\gcd(q^j+M',M)=1$ for every $j\ge 1$, so again, by  Lemma \ref{eq:periodof1}, we conclude that $\psi(q^j)=f(q^j+M')\neq 0$ for $j\ge 1$. Since $M=q^aM'$, this proves that $\psi(n)\neq 0$ if and only if $n\in M'^{\perp}$.
So $\psi$ is a Dirichlet character modulo $M'$.
We have $\psi(n)=\theta(n)=f(n)$ for $n\in M^{\perp}$, so $\psi$ induces $\chi$, so  $t|M'$ (see Remark \ref{rem:pushout_nowa}~(c)). This leads to a contradiction, since $q|t$ and $q\nmid M'$.
\medskip

(ii) Let $\ell>0$. Then, since $\gcd(q,q^{\ell}+M')=1$,
$$
f(q^{a+\ell})=f(q^{a+\ell}+M)=f(q^a(q^{\ell}+M'))=f(q^a)f(q^{\ell}+M')=0,
$$
the last equality follows from (i).
\medskip

(iii) Let $b_q<c<a$. Assume that $f(q^c)\neq 0$. Then, for every $k\in\N_0$, as $q|\frac{M}{q^c}$,
$$
f(q^c)=f(q^c+kM)=f(q^c(1+k\frac{M}{q^c}))=f(q^c)f(1+k\frac{M}{q^c}),
$$
so $\frac{M}{q^c}$ is a period associated to the position 1. Then $t|\frac{M}{q^c}$  (see Remark \ref{rem:pushout_nowa}~(c)), whence $\nu_q(t)\leq\nu_q(\frac{M}{q^c})$, while on the other side, because of the definition of $b_q$,
$$\nu_q(t)=\nu_q(M)-b_q>\nu_q(M)-c=\nu_q(M/q^c),$$  and we obtain a contradiction.
\medskip

We have proved that $f(q^{b_q+\ell})=0$ for every $\ell>0$, so $f(q^{b_q+\ell})=f(q^{b_q})\theta(q^{\ell})$, because $q$ divides the conductor $t$ of $\theta$, so $\theta(q)=0$.
\end{proof}

Let us finish with the following observation.

\begin{Prop}\label{p:odwrotny} Let $\theta$ be a Dirichlet character modulo $t$,  $q_1,\ldots,q_u$ - primes and $b_1,\ldots,b_u\in\N$. Assume that  $f$ is a multiplicative function such that $f(n)=\theta(n)$ for $n\in (q_1\ldots q_u)^{\perp}$ and
\begin{equation}\label{eq:p_odwrotny}
f(q_i^{b_i+\ell})=f(q_i^{b_i})\theta(q_i^{\ell})
\end{equation}
for every $i=1,\ldots,u$ and $\ell\ge 0$. Then $f$ is $q_1^{b_1}\ldots q_u^{b_u}t$-periodic.
\end{Prop}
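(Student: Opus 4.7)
The plan is to verify $f(n+M)=f(n)$ directly for each $n\in\N$, where $M=q_1^{b_1}\cdots q_u^{b_u}t$, by using multiplicativity together with the key recursion (\ref{eq:p_odwrotny}) to ``truncate'' the $q_i$-parts of $n$ at exponent $b_i$, and then using the $t$-periodicity of $\theta$ on the remaining factor.

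First I would write $n=\prod_{i=1}^u q_i^{a_i}\cdot n'$ with $n'\in(q_1\cdots q_u)^\perp$, and split the index set by $S=\{i:a_i\ge b_i\}$ and $T=\{i:a_i<b_i\}$. Multiplicativity and (\ref{eq:p_odwrotny}) give the decomposition
\[
f(n)=\prod_{i\in T}f(q_i^{a_i})\cdot\prod_{i\in S}f(q_i^{b_i})\cdot\theta(\tilde n),\qquad \tilde n:=\frac{n}{w(n)},\ w(n):=\prod_{i\in T}q_i^{a_i}\prod_{i\in S}q_i^{b_i},
\]
because $\theta$ is completely multiplicative, we can absorb the factors $\theta(q_i^{a_i-b_i})$ ($i\in S$) and $\theta(n')$ into a single $\theta(\tilde n)$.

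Next I would compute valuations of $n+M$. Since $\nu_{q_i}(M)=b_i+\nu_{q_i}(t)\ge b_i$ for every $i$, for $i\in T$ one has $\nu_{q_i}(n+M)=a_i$ (as $a_i<b_i\le\nu_{q_i}(M)$), and for $i\in S$ one has $\nu_{q_i}(n+M)\ge b_i$. Consequently the partition into $S,T$ is preserved when passing from $n$ to $n+M$, and $w(n+M)=w(n)$. Applying the same decomposition to $n+M$ gives
\[
f(n+M)=\prod_{i\in T}f(q_i^{a_i})\cdot\prod_{i\in S}f(q_i^{b_i})\cdot\theta\bigl(\widetilde{n+M}\bigr),
\]
so the equality $f(n+M)=f(n)$ reduces to $\theta(\widetilde{n+M})=\theta(\tilde n)$.

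Finally, using $w(n+M)=w(n)$, I would compute
\[
\widetilde{n+M}-\tilde n=\frac{M}{w(n)}=\prod_{i\in T}q_i^{b_i-a_i}\cdot t,
\]
which is a multiple of $t$; hence $\theta(\widetilde{n+M})=\theta(\tilde n)$ by the $t$-periodicity of the Dirichlet character~$\theta$, completing the proof. The one step that needs the most care is the bookkeeping of the valuations of $n+M$ (in particular, ensuring $a_i=b_i$ is included in $S$ so that the possible ``jump'' $\nu_{q_i}(n+M)>b_i$ is harmlessly absorbed by $f(q_i^{c_i})=f(q_i^{b_i})\theta(q_i^{c_i-b_i})$); everything else is a routine reorganization of the factors.
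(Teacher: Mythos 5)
Your proof is correct and is essentially the paper's argument written out inline: the decomposition $f(n)=\prod_{i\in T}f(q_i^{a_i})\prod_{i\in S}f(q_i^{b_i})\,\theta(\tilde n)$ with $w(n)=\gcd(n,Q)$, $Q=\prod_i q_i^{b_i}$, is exactly the paper's factorization $f=g\cdot\widetilde\theta$ (with $g(q_j^k)=f(q_j^{\min(k,b_j)})$, $g\equiv 1$ on $(q_1\cdots q_u)^\perp$, and $\widetilde\theta(n)=\theta(n/\gcd(n,Q))$). The paper packages the two periodicity claims as separate lemmas ($g$ is $Q$-periodic, $\widetilde\theta$ is $Qt$-periodic) and multiplies them, whereas you verify $f(n+M)=f(n)$ in one pass by tracking valuations and noting $M/w(n)\in t\N$; both are sound.
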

\begin{proof}
We start with the following two easy observations:
\begin{enumerate}
\item[(a)] If $g$ is any multiplicative function such that $g(n)=1$ for $n\in (q_1\ldots q_u)^{\perp}$ and
$g(q_i^{b_i+\ell})=g(q_i^{b_i})$ for every $\ell\ge 0$ and $i=1,\ldots,u$, then $g$ is $q_1^{b_1}\ldots q_u^{b_u}$-periodic. Let us illustrate the main arguments in the situation $u=2$: let $n=q_1^{a_1}q_2^{a_2}n'$, where $n'$ is coprime with $q_1,q_2$. Assume that $a_1< b_1$, $a_2\ge b_2$. Then $g(n+q_1^{b_1}q_2^{b_2})=g(q_1^{a_1})g(q_2^{a_2}n'+q_1^{b_1-a_1}q_2^{b_2})$ as $k:=q_2^{a_2}n'+q_1^{b_1-a_1}q_2^{b_2}$ is coprime with $q_1$. Note that $q_2^{b_2}|k$, so $g(k)=g(q_2^{\nu_{q_2}(k)})g(k/q_2^{\nu_{q_2}(k)})=g(q_2^{\nu_{q_2}(k)})=g(q_2^{b_2})=g(q_2^{a_2})$. It follows that $g(n+q_1^{b_1}q_2^{b_2})=g(q_1^{a_1})g(q_2^{a_2})=g(n).$
\item[(b)] If $\theta$ is a Dirichlet character of modulus $t$ (or more generally, a periodic  multiplicative function of period $t$), $Q$ is some natural number and $\widetilde{\theta}$ is defined by
    $\widetilde{\theta}(n)=\theta(n/\gcd(n,Q))$, then $\widetilde{\theta}$ is multiplicative and periodic of period $Qt$.
\end{enumerate}
If $f$ satisfies the condition (\ref{eq:p_odwrotny}), then $f(n)=g(n)\widetilde{\theta}(n)$ for every $n$, where $g$ is defined by
$g(q_j^{k})=f(q^{\min(k,b_j)})$ for $k\ge 0$, $j=1,\ldots,u$ and $g(q)=1$ for primes $q\notin\{q_1,\ldots,q_u\}$ and $\widetilde{\theta}$ is defined as in (b) with $Q=q_1^{b_1}\ldots q_u^{b_u}$. Then $g$ is $Q$-periodic and $\widetilde{\theta}$ is $Qt$-periodic and  the statement follows, since the product of two multiplicative periodic functions with periods $m'$ and $m''$ is periodic with period $\lcm(m',m'')$.
\end{proof}

\subsection{Pretentious functions versus local 1-Fourier uniformity: proof of Proposition~\ref{p:nonl1fu}}\label{s:l1fu}
Following \cite{Ab-Ku-Le-Ru}, we say that a topological system $(X,T)$ satisfies the strong $f$-MOMO property if for each increasing sequence $(b_k)$ of natural numbers satisfying $b_{k+1}-b_k\to\infty$, we have
$$
\lim_{K\to\infty}\frac1{b_K}\sum_{k<K}\Big\|\sum_{b_k\leq n<b_{k+1}}f(n)\cdot F\circ T^n\Big\|_{C(X)}=0$$
for each $F\in C(X)$.
\begin{Lemma} \label{l:l1f} Assume that $f\in\mathcal{M}$ is pretentious. Then there exists $\alpha\in[0,1)$ such that $f$ does not satisfy the strong $f$-MOMO property for the rotation $Tx=x+\alpha$ on the additive circle $\T$.\end{Lemma}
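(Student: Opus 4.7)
The plan is to witness failure of the strong $f$-MOMO property using the continuous test function $F(x)=e^{-2\pi i x}$ on $\T$ and a rotation $T_\alpha x=x+\alpha$ whose angle $\alpha$ is a spectral atom of some Furstenberg system of $f$. With this choice, $F\circ T_\alpha^n(x)=e^{-2\pi i x}\,e^{-2\pi i n\alpha}$, so the supremum norm in the MOMO definition reduces, for each block $[b_k,b_{k+1})$, exactly to $|S_{b_{k+1}}-S_{b_k}|$, where $S_N:=\sum_{n\le N}f(n)e^{-2\pi i n\alpha}$. Hence it suffices to produce $\alpha\in[0,1)$ and a sequence $(b_k)$ with $b_{k+1}-b_k\to\infty$ along which
\[
\liminf_{K\to\infty}\frac{1}{b_K}\sum_{k<K}|S_{b_{k+1}}-S_{b_k}|>0.
\]

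To locate $\alpha$, I fix any Furstenberg system $(X_f,\nu,S)$ of $f$, obtained as a weak-$\ast$ limit $\nu=\lim_k N_k^{-1}\sum_{n\le N_k}\delta_{S^n f}$. By the discussion in Section~\ref{s:Llike} (citing \cite{Fr-Le-Ru}), pretentiousness of $f$ implies that $(X_f,\nu,S)$ has discrete spectrum, so the spectral measure $\sigma_{\pi_0}$ of the zero-coordinate projection $\pi_0$ is purely atomic. Its total mass equals $\int|\pi_0|^2\,d\nu=\lim_k N_k^{-1}\sum_{n\le N_k}|f(n)|^2$, which is strictly positive: pretentiousness forces $\mathbb{D}(|f|,1)<+\infty$ (cf.\ Footnote~\ref{f:btrivial}), hence $|f|^2$ is a non-negative multiplicative function pretending to~$1$, whose Ces\`aro mean is pinned by Wirsing's theorem to a positive Euler product. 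Thus $\sigma_{\pi_0}$ carries at least one atom, which I label $\alpha\in[0,1)$, with mass $m:=\sigma_{\pi_0}(\{\alpha\})>0$. A direct Fej\'er-kernel computation—Wiener's theorem in the form $\lim_{N\to\infty} N^{-1}\int K_N(x-\alpha)\,d\sigma_{\pi_0}(x)=\sigma_{\pi_0}(\{\alpha\})$, obtained by expanding $|S_N/N|^2$ as a Fej\'er convolution against $\sigma_{\pi_0}$ and applying dominated convergence along $(N_k)$—then yields
\[
\lim_{k\to\infty}\frac{|S_{N_k}|}{N_k}=\sqrt{m}>0.
\]

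To conclude, extract any sub-subsequence $(b_k)\subset(N_k)$ with $b_{k+1}-b_k\to\infty$, which is possible since $N_k\to\infty$. The reverse triangle inequality followed by telescoping then gives
\[
\frac{1}{b_K}\sum_{k<K}|S_{b_{k+1}}-S_{b_k}|\ \ge\ \frac{1}{b_K}\sum_{k<K}\bigl(|S_{b_{k+1}}|-|S_{b_k}|\bigr)\ =\ \frac{|S_{b_K}|-|S_{b_0}|}{b_K}\ \longrightarrow\ \sqrt{m}>0,
\]
which contradicts strong $f$-MOMO for $T_\alpha$ with test function $F$, so the lemma follows. The only substantive input is the discrete-spectrum structure of Furstenberg systems of pretentious functions, imported from \cite{Fr-Le-Ru}; granted that, the remaining steps are Wiener's theorem and an elementary telescoping estimate, so I foresee no further obstacles.
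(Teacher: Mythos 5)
Your approach is genuinely different from the paper's. The paper realizes the Furstenberg system $(X_f,\kappa,S)$ as a factor of $R\times \mathrm{Id}_{[0,1]}$ for a group rotation $R$, and then invokes the lifting lemma of \cite{Ka-Ku-Le-Ru} to manufacture the sequence $(b_k)$ witnessing failure of strong $f$-MOMO; your route instead works directly on $\T$ through the twisted partial sums $S_N(\alpha)=\sum_{n\le N}f(n)e^{-2\pi in\alpha}$ and a telescoping estimate. That part of the plan (the reduction to the sup-norm, the telescoping inequality, and the existence of an atom of $\sigma_{\pi_0}$ via Wirsing) is sound.

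However, the crucial quantitative step --- the claim that $\lim_{k\to\infty}|S_{N_k}(\alpha)|/N_k=\sqrt{m}$ with $m=\sigma_{\pi_0}(\{\alpha\})$ --- is not justified, and in fact it is false. Wiener's lemma controls the measure $\sigma_{\pi_0}$ via its Fourier coefficients, i.e.\ the \emph{correlation} averages $\frac1{N_k}\sum_n f(n+h)\overline{f(n)}$, which converge to $\widehat{\sigma}_{\pi_0}(h)$ only \emph{pointwise in $h$}. Your ``Fej\'er convolution and dominated convergence'' step requires interchanging the limit $k\to\infty$ with a Ces\`aro sum over $|h|<N_k$, which is a growing index set; this interchange does not follow from dominated convergence, because the rate of convergence in $h$ is not controlled. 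Equivalently: the $L^2(\nu)$ mean ergodic theorem gives $A_N^\alpha:=\frac1N\sum_{n<N}e^{-2\pi in\alpha}\pi_0\circ S^n\to P_\alpha\pi_0$ in $L^2(\nu)$, but says nothing about the value $A_{N_k}^\alpha(f)=S_{N_k}(\alpha)/N_k$ at the single (quasi-generic) point $f\in X_f$, which in general is a $\nu$-null point. A concrete counterexample is $f(n)=n^{it}$ with $t\neq0$ and $\alpha=0$: here $\widehat{\sigma}_{\pi_0}(h)=1$ for every $h$, so $\sigma_{\pi_0}=\delta_0$ and $m=1$, yet $|S_N(0)|/N\to 1/|1+it|<1$. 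So the identity you invoke fails, and your argument as written does not establish the positivity of $\liminf_K b_K^{-1}\sum_{k<K}|S_{b_{k+1}}-S_{b_k}|$.

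The gap is repairable, but not by the Wiener mechanism: instead one should use Daboussi--Delange (Corollary~1 of \cite{Da-De}, already cited in the paper), which says $f\in\mathcal{M}$ is pretentious iff it fails to be aperiodic, i.e.\ iff there is an arithmetic progression along which the Ces\`aro averages of $f$ have positive $\limsup$. Passing to characters of $\Z/a\Z$ and pigeonholing gives a rational $\alpha$ and a subsequence $(M_j)$ with $|S_{M_j}(\alpha)|/M_j\ge c>0$; thinning $(M_j)$ to a sequence with gaps tending to infinity then feeds directly into your telescoping step (a $\limsup$ is enough there, since $\bigl(|S_{b_K}|-|S_{b_0}|\bigr)/b_K\ge c-o(1)$). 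With that substitution the strategy does work; as written, however, the proof has a real error, since it rests on a false pointwise Wiener identity.
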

\begin{proof} Assume that $\kappa\in V(f)$, that is, for some $(N_k)$, $\lim_{k\to\infty}\frac1{N_k}\sum_{n<N_k}\delta_{S^nf}=\kappa$. In view of \cite{Fr-Le-Ru}, the corresponding Furstenberg system $(X_f,\kappa,S)$ has discrete spectrum. It follows that there exists a compact Abelian group $G$ (considered with Haar measure $m_G$) such that for some $g_0\in G$, the rotation $Rx=x+g_0$ is ergodic and the Furstenberg system $(S,\kappa)$ is a factor of $R\times Id_{[0,1]}$ (on $[0,1]$ we consider Lebesgue measure ${\rm Leb}_{[0,1]}$), see e.g.\ \cite{Ed}.
Let
$$
p:(G\times [0,1],m_G\otimes {\rm Leb}_{[0,1]})\to (X_f,\kappa)$$
settles the factor map. Let $U_p$ stand for the corresponding Koopman operator:
$$
U_p:L^2(X_f,\kappa)\to
L^2(G\times [0,1],m_G\otimes {\rm Leb}_{[0,1]}).$$
Then $U_p$ yields a (graph) joining $\rho$ between the algebraic system $R\times Id_{[0,1]}$ and the Furstenberg system $S$, and, for some $\chi\in\widehat{G}$ and $j\in C([0,1])$, we have $U_p(\pi_0)\not\perp \chi\ot j$.
Therefore, using the definition of $\rho$ and the lifting lemma from \cite{Ka-Ku-Le-Ru} (see also Remark~1.4, Proposition~3.1 and Theorem~A (and its proof) in \cite{Ka-Ku-Le-Ru}), there exist a subsequence $(N_{k_\ell})$ of $(N_k)$ and a sequence $(g_n,t_n)\in G\times[0,1]$ such that the sequence $(g_n,t_n, S^nf)$ is quasi-generic (along $(N_{k_\ell})$) for $\rho$
and the set
$$\{b_1<b_2<\ldots\}:=\{n\colon (g_{n+1},t_{n+1})\neq (Rg_n,t_n)\}$$
has the property $b_{k+1}-b_k\to\infty$ and then (by slightly changing the sequence $(b_k)$ if necessary)
$$0\neq \int U_p(\pi_0)\cdot (\chi\ot j)\,d m_G\ot {\rm Leb}_{[0,1]}=
\int \pi_0\ot(\chi\ot j)\,d\rho=
$$
$$
\lim_{K\to\infty}\frac1{b_K}\sum_{k<K}\sum_{b_k\leq n<b_{k+1}}\pi_0(S^nf)\cdot (\chi\ot j)((R\times Id_{[0,1]})^n(g_k,t_k))=$$
$$
\lim_{K\to\infty}\frac1{b_K}\sum_{k<K}\sum_{b_k\leq n<b_{k+1}}u(n)\chi(g_k+ng_0)j(t_k)=$$$$
\lim_{K\to\infty}\frac1{b_K}\sum_{k<K}j(t_k)\chi(g_k)\sum_{b_k\leq n<b_{k+1}}(\chi(g_0))^n f(n).$$
It follows that
$$
\limsup_{K\to\infty}\frac1{b_K}\sum_{k<K}\Big|\sum_{b_k\leq n<b_{k+1}}(\chi(g_0))^nu(n)\Big|>0$$
and therefore (looking at the property of $(b_k)$), if $\chi(g_0)=e^{2\pi i \alpha}$, the rotation by $\alpha$
does not satisfy the strong $f$-MOMO property.
\end{proof}

In particular, we {\bf do not} have the following property: for each $(b_k)$ such that $b_{k+1}-b_k\to\infty$, we have
$$
(\ast)\;\;\lim_{K\to\infty}\frac1{b_K}\sum_{k<K}\Big\|\sum_{b_k\leq n<b_{k+1}}f(n)e^{2\pi in\cdot}\Big\|_{C(\T)}=0.$$

\noindent
{\em Proof of Proposition~\ref{p:nonl1fu}}
As shown in \cite{Ka-Le-Ri-Te}, the property $(\ast)$ is equivalent to~\eqref{l1fu1}, so the proposition follows from Lemma~\ref{l:l1f}.\bez

\vspace{1ex}
\noindent
Faculty of Mathematics and Computer Science, Nicolaus Copernicus University, Chopin street 12/18, 87-100 Toruń, Poland\\
skasjan@mat.umk.pl, mlem@mat.umk.pl

\vspace{2ex}
\noindent
School of Mathematics, University of Bristol, BS8 1QU, United
Kingdom\\ oleksiy.klurman@bristol.ac.uk

\end{document}